\def\date{May 21, 2015}
\newtheorem{proposition}{Proposition}[section]
\newtheorem{lemma}[proposition]{Lemma}
\newtheorem{corollary}[proposition]{Corollary}
\newtheorem{theorem}[proposition]{Theorem}
\newtheorem{claim}[proposition]{Claim}
\newcommand{\qed}{$\square$\bigskip}
\newenvironment{proof}{{\noindent\bf Proof. }}{\hfill \qed}
{\theorembodyfont{\rmfamily}
\newtheorem{definition}[proposition]{Definition}

}
\newcommand{\mcal}{\mathcal}
\newcommand{\C}{\mcal{C}}
\newcommand{\F}{\mcal{F}}
\def\xx#1{\oplus#1}
\begin{document}
\font\smallrm=cmr8

\phantom{a}\vskip .25in
\centerline{{\large \bf  FIVE-LIST-COLORING GRAPHS ON SURFACES II.}}
\smallskip
\centerline{{\large\bf A LINEAR BOUND FOR CRITICAL GRAPHS IN A DISK}}
\vskip.4in
\centerline{{\bf Luke Postle}%
\footnote{\texttt{lpostle@uwaterloo.ca}.}} 
\smallskip
\centerline{Department of Combinatorics and Optimization}
\centerline{University of Waterloo}
\centerline{Waterloo, ON}
\centerline{Canada N2L 3G1}
\medskip
\centerline{and}

\medskip
\centerline{{\bf Robin Thomas}%
\footnote{\texttt{thomas@math.gatech.edu}. Partially supported by NSF under
Grant No.~DMS-1202640.}}
\smallskip
\centerline{School of Mathematics}
\centerline{Georgia Institute of Technology}
\centerline{Atlanta, Georgia  30332-0160, USA}

\vskip 1in \centerline{\bf ABSTRACT}
\bigskip\bigskip

{
\noindent
Let $G$ be a plane graph with outer cycle $C$ and let $(L(v):v\in V(G))$ be 
a family of sets such that  $|L(v)|\ge5$ for every $v\in V(G)$.
By an $L$-coloring of a subgraph $J$ of $G$ we mean a (proper) coloring $\phi$ of $J$  such that $\phi(v)\in L(v)$
for every vertex $v$ of $J$.
We prove a conjecture of Dvo\v{r}\'ak et al.\ that  if $H$ is a minimal subgraph of $G$  such that
$C$ is a subgraph of $H$ and every $L$-coloring of $C$ that extends to an
$L$-coloring of $H$ also extends to an $L$-coloring of $G$, then $|V(H)|\le19|V(C)|$.

This is a lemma that plays an important role in subsequent papers,
because it motivates the study of graphs embedded in 
surfaces that satisfy an isoperimetric inequality suggested by this result.
Such study turned out to be quite profitable for the subject of list coloring graphs on surfaces.
}

\vfill \baselineskip 11pt \noindent September 2012. Revised \date.
\vfil\eject
\baselineskip 18pt

\section{Introduction}
\label{sec:intro}
All {\em graphs} in this paper are finite and simple; that is, they have no loops or parallel edges. 
{\em Paths} and {\em cycles} have no repeated vertices 
or edges. If $G$ is a graph and $L=(L(v):v\in V(G))$ is a family of sets, then we say that $L$ is a {\em list assignment for $G$}.
It is a {\em $k$-list-assignment}, if $|L(v)|\ge k$ for every vertex $v\in V(G)$.
An $L$-coloring  of $G$ is a (proper) coloring $\phi$ of $G$  such that $\phi(v)\in L(v)$
for every vertex $v$ of $G$.
We prove 
the following theorem which settles a conjecture of Dvo\v r\'ak et al.~\cite{AlbertsonsConj}.

\begin{theorem}\label{LinearCycle0}
Let $G$ be a plane graph with outer cycle $C$, let $L$ a $5$-list-assignment for $G$, and let $H$ be a minimal subgraph of
$G$ such that every $L$-coloring of $C$ that extends to an $L$-coloring of $H$ also extends to an $L$-coloring of $G$.
Then $H$ has at most $19|V(C)|$ vertices.
\end{theorem}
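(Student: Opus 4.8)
The plan is to argue by contradiction, choosing a counterexample $G$ with $|V(G)|+|E(G)|$ smallest, and to reduce first to the case $G=H$. One checks that every minimal $H$ as in the statement in fact contains $C$ (if $C\not\subseteq H$, then $H\cup C$ is a strictly larger subgraph of $G$ still having the stated extension property, contradicting minimality), that $C$ remains the outer cycle of $H$, and that $H$ is \emph{$C$-critical}: for every edge $e\in E(H)\setminus E(C)$ there is an $L$-coloring of $C$ that extends to an $L$-coloring of $H-e$ but not of $H$. If moreover every $L$-coloring of $C$ extends to $H$, then minimality forces $H=C$ and the bound is trivial. So it suffices to prove: if $G\neq C$ is a $C$-critical plane graph with respect to a $5$-list-assignment, then $|V(G)|\le 19|V(C)|$.

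Next I would carry out the routine reductions, each justified directly by $C$-criticality: (i) every vertex of $V(G)\setminus V(C)$ has degree at least $5$, since a vertex of degree at most $4$ can be colored after all its neighbors and hence deleted; (ii) $G$ is $2$-connected and $C$ has no chord, because a cut-vertex, a bridge, or a chord of $C$ partitions $G$ into strictly smaller graphs that are critical with respect to cycles $C_1,C_2$ with $|V(C_1)|+|V(C_2)|=|V(C)|+O(1)$, and, since any part of such a split with no interior vertices contributes only its boundary, the inductive hypothesis delivers the bound for $G$; (iii) consequently every internal face of $G$ is bounded by a cycle, and so has length at least $3$.

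The heart of the proof would be a battery of \emph{reducibility} lemmas asserting that a $C$-critical graph cannot contain certain local configurations, since otherwise an $L$-coloring of $C$ that extends to $G$ minus an edge would extend to all of $G$. The engine driving these arguments is Thomassen's precoloring-extension theorem --- if the outer cycle of a plane graph has a precolored edge, the remaining outer vertices have lists of size at least $3$, and interior vertices have lists of size at least $5$, then the precoloring extends --- together with its quantitative refinements and the extension results established in Part~I of this series. Using these one shows, among other things, that $G$ contains no separating cycle of length at most a fixed constant $k_0$ that encloses more than a bounded number of vertices (the enclosed piece, seen through the few colors on its boundary, would be recolorable and hence irrelevant to extending colorings of $C$), and that the neighborhood of $C$, the faces incident with $C$, and the internal vertices of degree $5$ are all severely restricted. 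The net effect is that, outside a collar of bounded size around $C$, the graph $G$ looks like a triangulation of minimum degree exactly $5$, with no room to conceal a large region that plays no role in color extension.

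Finally one runs a discharging argument. Assign to each vertex $v$ the charge $\deg(v)-6$ and to each face $f$ the charge $2|f|-6$; by Euler's formula the total charge is $-12$. Internal faces then have nonnegative charge, internal vertices have charge at least $-1$, and the substantial positive charge lives on the outer face (with charge $2|V(C)|-6$) and among the vertices and faces meeting $C$. Using the structural lemmas one designs discharging rules that move this charge so that every internal vertex finishes with nonnegative charge while the total deficit that must ultimately be covered from $V(C)$ and the outer face is controlled; tracking the bookkeeping shows $|V(G)\setminus V(C)|\le 18|V(C)|$, hence $|V(G)|\le 19|V(C)|$, contradicting the choice of $G$. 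The main obstacle is precisely the combination of the third and fourth steps: already obtaining \emph{some} linear bound requires the full list of reducible configurations and the careful handling of short separating cycles via precoloring extension, and bringing the constant down to $19$ requires optimizing the discharging rules and the analysis of the finitely many exceptional configurations near $C$; the initial reductions and the final counting are comparatively straightforward.
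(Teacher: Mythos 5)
Your reduction of the theorem to the statement ``if $G$ is $C$-critical with respect to a $5$-list-assignment then $|V(G)|\le 19|V(C)|$'' matches the paper, and the easy facts you list (internal degree at least $5$, $2$-connectivity, criticality of the piece inside a separating cycle) are all correct. But the core of the argument is missing, and the two mechanisms you propose to supply it do not work as described. First, your step (ii) claims that a chord of $C$ can be disposed of by induction, but the statement $|V(G)|\le 19|V(C)|$ is not closed under that decomposition: a chord splits $C$ into cycles $C_1,C_2$ with $|V(C_1)|+|V(C_2)|=|V(C)|+2$, and adding the two inductive bounds gives $|V(G)|\le 19|V(C)|+36$, which does not contradict anything. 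The same accounting failure recurs every time you cut along a short separating cycle or peel off a vertex with three neighbours on $C$, and these cuts are unavoidable. This is precisely why the paper does not induct on the vertex count at all: it introduces the deficiency ${\rm def}(G)=|E(G)\setminus E(C)|-3v(G)$ (equivalently $|C|-3-\sum_f(|f|-3)$), which is superadditive with a strict gain under chord splits, corrects it by a term $\epsilon v(G)+\alpha(b(G)+q(G))$ counting internal vertices and (quasi-)boundary vertices, and proves the much stronger inductive statement $d(T)\ge 3-\gamma$ (Theorem~\ref{StrongLinear}); the linear bound $|V(G)\setminus V(C)|\le 18|V(C)|$ only falls out at the very end.

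Second, the discharging step is a statement of intent rather than an argument. With charges $\deg(v)-6$ and $2|f|-6$, every internal vertex of degree $5$ carries charge $-1$, and in a critical canvas essentially all internal vertices may have degree $5$ while lying at unbounded distance from $C$; local discharging rules anchored at $C$ and at the exceptional configurations near $C$ cannot reach them, so ``every internal vertex finishes with nonnegative charge'' cannot be arranged by bookkeeping alone --- if it could, one would be proving that $v(G)$ is bounded by a constant times the charge available near $C$ without ever using criticality in the interior. The paper's substitute for this is not a global discharging but a structural induction: Theorem~\ref{CycleChordTripod} produces a chord or a ``tripod'' in every critical canvas, and the proof of Theorem~\ref{StrongLinear} analyzes a short explicit list of configurations (tripods of degree $5$ and $6$, true and strong dividing vertices, the second-level tripod $x_2$, handled via Thomassen's theorem and the relaxation/Claim~\ref{ProperCrit} machinery), always passing to subcanvases and invoking the strengthened inductive hypothesis. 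So while your opening reduction is sound and your intuition that reducible configurations near the boundary drive the proof is in the right spirit, the proposal leaves unproved exactly the part of the theorem that is new: identifying an induction statement that survives the decompositions and a finite set of reducible configurations sufficient to carry it through.
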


Earlier versions of this theorem were proved for ordinary coloring (that is, when all the lists in $L$ are equal)
by Thomassen~\cite[Theorem~5.5]{ThomCritical},
who proved it with $19|V(C)|$ replaced by $5^{|V(C)|^3}$, and by Yerger~\cite{YerPhD},
who improved the bound to $O(|V(C)|^3)$.
If every vertex of $G\setminus V(C)$ has degree at least five and all its neighbors in $G$ belong to $C$,
then the only graph $H$ satisfying the hypothesis of Theorem~\ref{LinearCycle0} is the graph $G$ itself.
It follows that our bound is asymptotically best possible.

The fact that our bound in Theorem~\ref{LinearCycle0} is linear in $|V(C)|$ turned out to be fairly significant.
In~\cite{PosThoHyperb} we define a family $\F$ of graphs embedded in surfaces to be {\em hyperbolic}
 if there exists a constant $c>0$
such that if $G\in\F$ is a graph that is embedded in a surface
$\Sigma$, then for every closed curve $\gamma:{\mathbb S}^1\to\Sigma$ that bounds
an open disk $\Delta$ and intersects $G$ only in vertices,
if $\Delta$ includes a vertex of $G$, then
the number of vertices of $G$ in $\Delta$ is at most
$c(|\{x\in {\mathbb S}^1\,:\, \gamma(x)\in V(G)\}|-1)$.
We say that $c$ is a \emph{Cheeger constant} for $\F$.
We were able to develop a structure theory of hyperbolic families, and that theory has applications to coloring,
as follows.

Let $L$ be a list assignment for a graph $G$.
We say that $G$ is {\em $L$-critica}l if $G$ is not $L$-colorable, but every proper subgraph of $G$ is.
It follows from Theorem~\ref{LinearCycle0} that the family of  embedded graphs 
that are $L$-critical for some $5$-list-assignment  $L$ is a hyperbolic.
The theory of hyperbolic families now implies that if
 $G$ is a graph embedded in a surface $\Sigma$ of genus $g$ and  $L$ is
a $5$-list-assignment for $G$, then
\begin{itemize}
\item if every non-null-homotopic cycle in $G$ has length $\Omega(\log g)$, then $G$ has an $L$-coloring, and
\item for every fixed $g$ there is a polynomial time algorithm to decide whether $G$ has an $L$-coloring.
\end{itemize}

Let us emphasize that the above  results are consequences of Theorem~\ref{LinearCycle0} and the
theory of hyperbolic families. Thus the same conclusion holds for other coloring problems that satisfy
the analog of Theorem~\ref{LinearCycle0} (with $19$ replaced by an arbitrary constant).
We will return to this in a moment.

The structure theory of  hyperbolic families suggests the following strengthening of  hyperbolicity.
Let $\F$ be a hyperbolic family of embedded graphs, let $c$ be a Cheeger constant for $\F$, and
let $d:=\lceil3(2c+1)\log_2(8c+4)\rceil$.
We say that $\F$ is {\em strongly hyperbolic} if there exists a constant $c_2$ such that 
for every $G\in\F$ embedded in a surface $\Sigma$ and for every two disjoint cycles $C_1,C_2$ of length at
most $2d$ in $G$, if there exists a cylinder $\Lambda\subseteq\Sigma$ with boundary components $C_1$ and $C_2$,
then $\Lambda$ includes at most $c_2$ vertices of $G$.

In a later paper of this series we will show that the family of embedded graphs that are $L$-critical for
some $5$-list-assignment $L$ is, in fact, strongly hyperbolic. Our theory of hyperbolicity then implies
that if
 $G$ is a graph embedded in a surface $\Sigma$ of genus $g$ and  $L$ is
a $5$-list-assignment for $G$, then
\begin{itemize}
\item if $G$ is $L$-critical, 
then $|V(G)|=O(g)$,
\item  if every non-null-homotopic cycle in $G$ has length $\Omega(g)$, and a set $X\subseteq V(G)$
of vertices that are pairwise at distance $\Omega(1)$ is precolored from the corresponding lists,
then the precoloring extends to an $L$-coloring of $G$, and
\item  if every non-null-homotopic cycle in $G$ has length $\Omega(g)$, and the graph $G$
is allowed to have crossings, but every two crossings are at distance $\Omega(1)$, then $G$ has
an $L$-coloring.
\end{itemize}

When combined with the strong hyperbolicity of another closely related family, we further obtain that
\begin{itemize}
\item if $G$ has at least one  $L$-coloring, then it has at least $2^{\Omega(|V(G)|)}$ distinct
$L$-colorings.
\end{itemize}

As indicated earlier, these results follow from the strong hyperbolicity of the family of $L$-critical
graphs, and hence the same results hold for other coloring problems as well.
The two other most interesting strongly hyperbolic families are the family of embedded graphs
of girth at least four that are $L$-critical for  some $4$-list-assignment $L$, and
 the family of embedded graphs
of girth at least five that are $L$-critical for  some $3$-list-assignment $L$.
We refer to~\cite{PosThoHyperb} for details.

In order to prove  Theorem~\ref{LinearCycle0} we prove a stronger version, 
stated below as Theorem~\ref{StrongLinear},
 which bounds the  number of vertices in terms of the sum of the sizes of large faces, a notion we call ``deficiency''. Another aspect to the proof is to incorporate the counting of neighbors of $C$ into the stronger formula. This allows the finding of reducible configurations close to the boundary in a manner similar to the discharging method's use of Euler's formula.

The paper is organized as follows.
In Section~\ref{subsec:criticalgraph} we define a more general notion of criticality for graphs and ``canvases'',
 which will be useful for proving Theorem~\ref{LinearCycle0}, and
we prove a structure theorem for said critical canvases. In Section~\ref{subsec:deficiency} we formally define deficiency and prove some lemmas about the deficiency of canvases. 
In Section~\ref{subsec:LinearCycle0} we formulate Theorem~\ref{StrongLinear} and prove several auxiliary results. 
In Section~\ref{sec:pfstronglin} we prove Theorem~\ref{StrongLinear}. 
In  Section~\ref{sec:conseqstronglin} we prove  Theorem~\ref{LinearCycle0}. 

\section{Critical Canvases}\label{subsec:criticalgraph}

In this section we define the notion of ``canvas", which will be used throughout the paper.
We define critical graphs and critical canvases, and prove several lemmas and 
Theorem~\ref{CycleChordTripod}, which may be regarded as a structure theorem for critical canvases.

In a plane graph $G$ exactly one of its faces is unbounded; we call that face the \emph{outer face} of $G$.
All other faces of $G$ are called \emph{internal}. If the outer face is a bounded by a cycle $C$, then we refer to $C$ as the \emph{outer cycle} of $G$.

We will need the following beautiful theorem of Thomassen~\cite{ThomPlanar}.
We state it in a slightly stronger form than~\cite{ThomPlanar}, but our version follows easily from the original by induction.
%
%

\begin{theorem}[Thomassen]\label{Thom2}
Let $G$ be a plane graph, $Z$ the set of vertices incident with the outer face of $G$, 
and let $S\subseteq Z$ be such that $|S|\le 2$, and if $|S|=2$ then the vertices in $S$ are adjacent.
Let $L$ be a list assignment for $G$ with $|L(v)|\ge 5$ for all $v\in V(G)\setminus V(Z)$,
$|L(v)|\ge 3$ for all $v\in V(Z)\setminus V(S)$,  $|L(v)|=1$ for all $v\in S$, and if $|S|=2$, then the lists of 
vertices in $S$ are disjoint. Then $G$ is $L$-colorable.
\end{theorem}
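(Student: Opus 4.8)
The plan is to deduce the theorem from the original $5$-list-coloring theorem of Thomassen~\cite{ThomPlanar} --- the special case in which $G$ is a $2$-connected plane graph whose outer face is bounded by a cycle $C$, every internal face of $G$ is a triangle, and $S$ consists of the two ends of an edge of $C$ precolored with distinct colors --- by induction on $|V(G)|$, peeling off the extra generality ($G$ need not be $2$-connected, its outer face need not be bounded by a cycle, and $|S|$ may be $0$ or $1$) through a sequence of routine reductions. The base case $|V(G)|=1$ is trivial.

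If $G$ is disconnected, color each component separately, using that $S$ lies in a single component (when $|S|=2$ its two vertices are adjacent). So assume $G$ is connected. If $G$ is not $2$-connected with outer cycle, then the boundary walk of its outer face is not a cycle, so $G$ has a bridge $e=uv$ or a cut vertex $w$. In the cut-vertex case write $G=G_1\cup G_2$ with $V(G_1)\cap V(G_2)=\{w\}$, each $G_i$ a plane subgraph with fewer vertices than $G$, chosen so that $w$ lies on the outer face of $G_2$ and the edge spanned by $S$ (if $|S|=2$), or the vertex of $S$ (if $|S|=1$), lies in $G_1$. Apply the inductive hypothesis to $G_1$ with $S\cap V(G_1)$, obtaining a coloring $\phi_1$, and then to $G_2$ with $w$ precolored to $\phi_1(w)$; the two colorings agree on $w$ and glue to an $L$-coloring of $G$. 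The bridge case is the same with $G_1,G_2$ the components of $G-e$, except that we precolor the endpoint of $e$ in $G_2$ with a color of its list other than $\phi_1(u)$ --- which exists since that list has size at least $3$ when the endpoint is internal to $G$, and when $S=\{u,v\}$ the hypothesis $L(u)\cap L(v)=\emptyset$ already guarantees $\phi_1(u)\neq\phi_2(v)$.

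We may now assume $G$ is $2$-connected with outer cycle $C$. Suppose $|S|=2$, say $S=\{p,q\}$; since $p,q\in V(C)$, the edge $pq$ is either an edge of $C$ or a chord of $C$. If it is a chord, then $pq$ together with the two arcs of $C$ cuts $G$ into two proper plane subgraphs $G_1,G_2$, each with outer cycle through the edge $pq$ and each inheriting the list hypotheses; coloring each by the inductive hypothesis and gluing along $\{p,q\}$ finishes this case. If instead $|S|\le 1$, pick a vertex $u\in V(C)$ adjacent on $C$ to the vertex of $S$, or an edge $uv$ of $C$ if $S=\emptyset$, and shrink $L(u)$ (and $L(v)$, if $S=\emptyset$) to a singleton so that the resulting precolored set has size $2$ with its two vertices adjacent on $C$ and with disjoint lists --- possible as $|L(u)|,|L(v)|\ge 3$ --- noting that an $L$-coloring of this instance is one of the original. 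It remains to treat the case $|S|=2$ with $pq\in E(C)$: add edges inside the internal faces of $G$, if necessary, so that every internal face becomes a triangle without creating parallel edges or altering $C$ --- since every face of the $2$-connected graph $G$ is bounded by a cycle and a complete graph on five or more vertices is nonplanar, a legal diagonal can always be added to a face of length at least four, and this only shrinks the set of $L$-colorings --- and then apply Thomassen's theorem~\cite{ThomPlanar}.

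The proof is almost entirely bookkeeping; the one step needing genuine care is the splitting at a bridge or cut vertex, where one must verify that the decomposition can be arranged so that the shared vertices remain incident with the appropriate outer faces and that every list-size inequality --- including the downgrade to a singleton list at the shared vertex, and the status of vertices that are internal in $G$ but lie on the boundary of a piece --- is preserved in the inductive calls. Everything else is immediate from the inductive hypothesis and the cited theorem.
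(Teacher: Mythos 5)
Your proposal is correct and is essentially the argument the paper has in mind: the paper gives no proof of Theorem~\ref{Thom2}, saying only that the stronger form ``follows easily from the original by induction,'' which is exactly your reduction from Thomassen's precolored-edge, near-triangulation case. The one step you flag---arranging the split at a cut vertex or bridge so that the shared vertex lies on the outer face of the piece colored second---does go through, because at most one component of $G-w$ (resp.\ one side of the bridge) can be enclosed by a cycle of the other, and if such an enclosing side exists it contains every vertex incident with the outer face of $G$, hence all of $S$, so it can always be taken as the piece colored first.
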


\begin{definition}[$T$-critical]
Let $G$ be a graph, $T \subseteq G$ a (not necessarily induced) subgraph of $G$ and $L$ a list assignment for $G$. 
For an $L$-coloring $\phi$ of $T$, we say that \emph{$\phi$ extends to an $L$-coloring of $G$} if there exists an $L$-coloring $\psi$ of $G$ such that $\phi(v)=\psi(v)$ for all $v\in V(T)$. The graph $G$ is \emph{$\phi$-critical} if $\phi$ extends to every proper subgraph of $G$ containing $T$ but not to $G$.

The graph $G$ is \emph{$T$-critical with respect to the list assignment $L$} if $G \ne T$ and for every proper subgraph $G' \subset G$ such that $T \subseteq G'$, there exists an $L$-coloring of $T$ that extends to an $L$-coloring of $G'$, but does not extend to an $L$-coloring of $G$. If the list assignment is clear from the context, we shorten this and say that $G$ is \emph{$T$-critical}.
\end{definition}

We need the following lemma about subgraphs of critical graphs.
If $G$ is a graph and $X\subseteq V(G)$, then let $G[X]$ denote the subgraph of $G$ induced by $X$.

\begin{lemma} \label{SComponent}
Let $T$ be a subgraph of a graph $G$ such that $G$ is $T$-critical with respect to a list assignment $L$. Let $A,B\subseteq G$ be such that $A\cup B=G$, $T\subseteq A$ and $B\ne A[V(A)\cap V(B)]$. Then $G[V(B)]$ is $A[V(A)\cap V(B)]$-critical.
\end{lemma}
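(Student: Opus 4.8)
The plan is to prove Lemma~\ref{SComponent} directly from the definition of $T$-criticality, setting $T' := A[V(A)\cap V(B)]$ and $G' := G[V(B)]$, and verifying the two requirements: that $G' \ne T'$, and that every proper subgraph of $G'$ containing $T'$ admits an $L$-coloring of $T'$ which extends to that subgraph but not to $G'$. The first requirement is immediate: if $G[V(B)] = T' = A[V(A)\cap V(B)]$, then since $V(A)\cap V(B) \subseteq V(B)$ we would get $B \subseteq G[V(B)] = A[V(A)\cap V(B)]$, and combined with the hypothesis $B \ne A[V(A)\cap V(B)]$ this is a contradiction (one should check $B$ is actually a subgraph of $G[V(B)]$, which holds since $B \subseteq G$).

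For the main requirement, I would take an arbitrary proper subgraph $H$ of $G' = G[V(B)]$ with $T' \subseteq H$, and form the graph $\widehat H := A \cup H$. The key observations are: $\widehat H$ is a proper subgraph of $G$ (because $H$ misses an edge or vertex of $G[V(B)]$; if it misses a vertex $v \in V(B)$, then $v \notin V(A)$ since $V(A)\cap V(B) \subseteq V(T') \subseteq V(H)$, so $v \notin V(\widehat H)$; if it misses only an edge $e$ of $G[V(B)]$, then $e \notin E(A)$ for the same reason, so $e \notin E(\widehat H)$), and $T \subseteq A \subseteq \widehat H$. By $T$-criticality of $G$, there is an $L$-coloring $\phi$ of $T$ that extends to an $L$-coloring $\psi$ of $\widehat H$ but does not extend to $G$. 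Now $\psi$ restricted to $T'$ is an $L$-coloring of $T'$; it extends to $H$ (namely via $\psi$ itself restricted to $V(H) \subseteq V(\widehat H)$). It remains to show $\psi|_{T'}$ does \emph{not} extend to $G' = G[V(B)]$: if it did, say via a coloring $\rho$ of $G[V(B)]$, then since $\psi$ and $\rho$ agree on $V(A)\cap V(B) = V(T')$, the common-value gluing $\psi \cup \rho$ would be a proper $L$-coloring of $A \cup G[V(B)]$. Since $A \cup B = G$ and $B \subseteq G[V(B)]$, we have $A \cup G[V(B)] = G$, so $\psi\cup\rho$ is an $L$-coloring of $G$ extending $\psi$, hence extending $\phi$ — contradicting the choice of $\phi$.

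The one point requiring care — and the likely main obstacle — is the gluing step: that two proper $L$-colorings of $A$ and of $G[V(B)]$ which agree on $V(A)\cap V(B)$ combine to a proper coloring of the union. This needs that every edge of $A \cup G[V(B)] = G$ lies entirely inside $A$ or entirely inside $G[V(B)]$. An edge of $G$ with both endpoints in $V(B)$ lies in $G[V(B)]$ by definition of induced subgraph; an edge with at least one endpoint outside $V(B)$ — since $A \cup B = G$ forces the edge into $A$ or $B \subseteq G[V(B)]$, and it is not in $G[V(B)]$ — must lie in $A$. So every edge is monochromatic-checked by one of the two colorings, and properness of the union follows. This is exactly the place where the induced-subgraph hypothesis ``$G[V(B)]$'' (rather than an arbitrary $B' \supseteq B$) is used, and where the hypothesis $A \cup B = G$ is used. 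The remainder is bookkeeping about which vertices and edges lie where, driven throughout by the containment $V(A)\cap V(B) \subseteq V(T')$.
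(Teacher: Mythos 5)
Your argument is correct and rests on the same mechanism as the paper's proof --- invoke the $T$-criticality of $G$ on a well-chosen proper subgraph containing $T$, then glue the resulting coloring with a hypothetical extension across the interface $V(A)\cap V(B)$ --- but the organization is different. The paper argues by contradiction: it first reduces failure of criticality of $G'=G[V(B)]$ to the existence of a single edge $e\in E(G')\setminus E(S)$ (a reduction that needs the side observation that isolated vertices of $G'$ lie in $S$), and then applies $T$-criticality to $G\setminus e$. You instead verify the definition directly, applying criticality to $\widehat H=A\cup H$ for an arbitrary proper subgraph $H\supseteq T'$ of $G'$; this treats missing vertices and missing edges uniformly, dispenses with the isolated-vertex step, and you are more explicit than the paper about why the glued coloring is proper (every edge of $G$ lies in $A$ or in $G[V(B)]$, and the $V(B)$-endpoint of any $A$-edge leaving $V(B)$ lies in $V(A)\cap V(B)$, where the two colorings agree). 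Two small points. First, $\psi$ and $\rho$ may disagree on $V(H)\setminus V(A)$, so the combined coloring should be defined as $\rho$ on $V(B)$ and $\psi$ on $V(A)\setminus V(B)$ --- i.e., glue $\rho$ with $\psi$ restricted to $A$, which is what your ``two colorings of $A$ and of $G[V(B)]$'' phrasing amounts to; with that precedence the argument goes through verbatim. Second, your justification that $G[V(B)]\ne A[V(A)\cap V(B)]$ is not literally a contradiction: $B\subseteq A[V(A)\cap V(B)]$ together with $B\ne A[V(A)\cap V(B)]$ only says $B$ is a \emph{proper} subgraph of $A[V(A)\cap V(B)]$. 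Indeed the statement as written has a degenerate loophole (take $A=G$ and $B=G$ minus one edge: the hypotheses hold but $G$ is not $G$-critical), the paper's own proof also tacitly assumes $G[V(B)]\ne A[V(A)\cap V(B)]$ by passing immediately to an edge $e\in E(G')\setminus E(S)$, and in every application $B$ contains a vertex or edge not in $A$, which does force the inequality; so this is a looseness inherited from the statement rather than a defect specific to your argument, though it is worth flagging rather than calling it a contradiction.
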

\begin{proof}
Let $G'=G[V(B)]$ and $S=A[V(A)\cap V(B)]$. Since $G$ is $T$-critical, every isolated vertex of $G$ belongs to $T$, and
thus every isolated vertex of $G'$ belongs to $S$. Suppose for a contradiction
that $G'$ is not $S$-critical. Then, there exists an edge $e \in E(G') \setminus E(S)$ such
that every $L$-coloring of $S$ that extends to $G' \setminus e$ also extends to $G'$. Note
that $e \not\in E(T)$. Since $G$ is $T$-critical, there exists a coloring $\Phi$ of $T$ that
extends to an $L$-coloring $\phi$ of $G \setminus e$, but does not extend to an $L$-coloring
of $G$. However, by the choice of $e$, the restriction of $\phi$ to $S$ extends to an
$L$-coloring $\phi'$ of $G'$. Let $\phi''$ be the coloring that matches $\phi'$ on $V(G')$ and $\phi$
on $V(G) \setminus V(G')$. Observe that $\phi''$ is an $L$-coloring of $G$ extending $\Phi$, which
is a contradiction.
\end{proof}

\begin{definition}
\label{def:canvas}
We say the triple $(G,C,L)$ is a \emph{canvas} if $G$ is a $2$-connected plane graph, $C$ is its outer cycle, and $L$ is a list assignment for some graph $G'$ such that $G$ is a subgraph of $G'$,
$|L(v)|\ge 5$ for all $v\in V(G)-V(C)$ and there exists an $L$-coloring of $C$.
We say a canvas $(G,C,L)$ is \emph{critical} if $G$ is $C$-critical with respect to the list assignment~$L$.
\end{definition}

This definition of canvas differs from the one we used in~\cite{PosThoTwotwo} in two respects.
First, we allow $L$  to include lists of vertices that do not belong to $G$. That is just an artificial device
to make the notation easier when we pass to subgraphs.
Second, and more importantly, the current definition restricts the graph $G$ to be $2$-connected.
The reason for that is that frequently we will need to  manipulate faces of $G$ and doing so is much easier
when all the face boundaries are cycles.
That we can restrict to $2$-connected graphs follows from the next lemma.

\begin{lemma}\label{2Conn}
If $G$ is a plane graph, $C$ is its outer cycle, and $L$ is a list assignment for the vertices of $G$ such that
$G$ is $C$-critical with respect to~$L$, then $G$ is  $2$-connected, and hence $(G,C,L)$ is a canvas.
\end{lemma}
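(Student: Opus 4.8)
The plan is to argue by contradiction, so suppose $G$ is $C$-critical with respect to $L$ but $G$ is not $2$-connected. First I would dispose of trivial cases: $G$ is connected (any isolated vertex of a $C$-critical graph must lie in $C$, and $C$ is a cycle, so if $G$ were disconnected the component not meeting $C$ would contradict criticality — more carefully, a component disjoint from $C$ could simply be deleted, since colorings of $C$ extend or not independently of it; and $C$ itself is connected). So $G$ is connected but has a cutvertex $v$. Then I can write $G = A \cup B$ where $A$ and $B$ are connected subgraphs of $G$ with $V(A) \cap V(B) = \{v\}$, each of $A,B$ has at least one edge, and — using that $C$ is a cycle through $v$ — I can arrange that $C \subseteq A$. (The cycle $C$ passes through $v$ and lies in one "side"; all of $C$ can be thrown into $A$.) Here $B$ is chosen to be a union of one or more of the "bridges" of $G$ at $v$ lying outside $A$, so that $B \ne \emptyset$ as a graph with edges and $B \not\subseteq A$.

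Next I would apply Lemma 1.3 (the \texttt{SComponent} lemma) with $T = C$, and with this $A,B$. We have $A \cup B = G$, $C = T \subseteq A$, and $B \ne A[V(A)\cap V(B)]$ because $A[\{v\}]$ is a single vertex with no edges while $B$ has an edge. The lemma then tells us that $G[V(B)]$ is $A[\{v\}]$-critical, i.e.\ $B' := G[V(B)]$ is $\{v\}$-critical with respect to $L$ (note $G[V(B)] = B$ if $B$ is an induced subgraph, but in any case $G[V(B)] \supseteq B$ and the argument goes through for $G[V(B)]$). In particular $B'$ is not $\{v\}$-colorable-extendable: there is a $1$-coloring of $v$ (from $L(v)$) that does not extend to an $L$-coloring of $B'$. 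But now I would derive a contradiction from Theorem~1.2 (Thomassen's theorem). Indeed, $B'$ is a plane graph — it inherits an embedding from $G$ — and since $v$ is a cutvertex of the $2$-connected-on-the-boundary picture, in $B'$ the vertex $v$ is incident with the outer face of $B'$. Every vertex of $B'$ other than those on the outer boundary of $B'$ has a list of size $\ge 5$ (these are vertices of $G \setminus V(C)$); every vertex on the outer face of $B'$ other than $v$ — wait, here is the subtlety.

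The main obstacle I anticipate is exactly this list-size bookkeeping: Thomassen's theorem (Theorem~1.2) needs the non-precolored boundary vertices of $B'$ to have lists of size $\ge 3$, but a priori the boundary vertices of $B'$ other than $v$ are just vertices of $G$, some of which may lie on $C$ and hence have lists only of size $\ge 5$ overall but possibly we know nothing extra — actually vertices on $C$ have lists of size $\ge 5$ by the canvas/criticality hypothesis in this lemma (here $L$ is a $5$-list-assignment on all of $G$, since the lemma's hypothesis says "$L$ is a list assignment for the vertices of $G$ such that $G$ is $C$-critical" and $C$-criticality combined with Definition~1.5's spirit — no wait, in this bare lemma we are not assuming the canvas condition). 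Let me reconsider: the cleanest route is that the hypothesis of Lemma~1.7, together with the fact that $G$ is $C$-critical, forces $|L(v)| \ge 5$ for all $v \notin V(C)$ and $|L(v)| \ge 3$ (indeed $\ge 5$) is not automatic on $C$ — but $C \subseteq A$, so \emph{all} of $B'$ except $v$ consists of vertices not on $C$, hence with lists of size $\ge 5$, except possibly vertices shared with $C$; however $V(B') \cap V(C) = \{v\}$ since $C \subseteq A$ and $V(A) \cap V(B) = \{v\}$. Therefore every vertex of $B'$ other than $v$ has $|L| \ge 5 \ge 3$, the vertex $v$ is a single precolored vertex on the outer face, and Thomassen's theorem (with $|S| = 1$) gives that $B'$ is $L$-colorable extending the coloring of $v$ — contradicting $\{v\}$-criticality of $B'$. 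This completes the proof; the one place to be careful is verifying that $v$ is genuinely incident with the outer face of $B'$ in the inherited embedding, which follows because $B$ is a union of bridges of $G$ at $v$ hanging off the outer cycle $C$, so $v$ sees the outer face of $G$, and the outer face of $B'$ contains that of $G$ restricted appropriately.
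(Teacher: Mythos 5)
Your proposal is correct and follows essentially the same route as the paper: decompose $G$ as $A\cup B$ with $C\subseteq A$ and $|V(A)\cap V(B)|\le 1$, apply Lemma~\ref{SComponent} with $T=C$ to conclude that $G[V(B)]$ is critical with respect to a precolored set of at most one vertex, and contradict Theorem~\ref{Thom2}. The one wobbly point, making sure the cut vertex $v$ is incident with the outer face of $G[V(B)]$, is better handled by simply re-embedding $G[V(B)]$ with $v$ on its outer face (Thomassen's theorem needs only some plane embedding) than by your claim that $v$ sees the outer face of $G$, which need not hold when $B$ hangs inside an internal face.
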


\begin{proof}
If $G$ is not $2$-connected, then it has subgraphs $A,B$ such that $A\cup B=G$, $|V(A\cap B)|\le 1$,
$C$ is a subgraph of $A$ and $V(B)-V(A)\ne\emptyset$.
By Lemma~\ref{SComponent} the graph $G[V(B)]$ is $A[V(A)\cap V(B)]$-critical, contrary to
Theorem~\ref{Thom2}.
\end{proof}



Lemma~\ref{SComponent} has a useful corollary. To state it, however, we need 
notation for a subgraph of a plane graph $G$, where the subgraph consists of vertices and edges drawn in the closed disk bounded by a cycle $C$ of $G$. 
In fact, we will need this notation even when $C$ uses edges that do not belong to $G$.
Hence the following definition.

\begin{definition}
 Let $T=(G,C,L)$ be a canvas, and let $G'$ be a plane graph obtained from
 $G$ by adding a (possibly empty) set of edges inside internal faces of $G$.
 If $C'$ is a cycle in $G'$, we let $G\langle C' \rangle$ denote the subgraph of $G\cup C'$ contained in the closed disk bounded by $C'$. We let $T\langle C' \rangle$ denote the canvas $(G \langle C'\rangle, C', L)$.
 \end{definition}

\begin{corollary} \label{SubCycle}
Let $T=(G,C,L)$ be a critical canvas. If $C'$ is a cycle in $G$ such that $G\langle C' \rangle \ne C'$, then $T\langle C' \rangle$ is a critical canvas.
\end{corollary}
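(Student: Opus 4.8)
The plan is to deduce this from Lemma~\ref{SComponent}. Let $\Delta$ be the closed disk bounded by $C'$ and let $\Delta'$ be the closure of its complement, so $\Delta\cap\Delta'=C'$. Since $G$ is plane and $C'\subseteq G$, no edge of $G$ crosses $C'$, so every vertex and edge of $G$ lies in $\Delta$ or in $\Delta'$. Put $B:=G\langle C'\rangle$ (the part of $G$ drawn in $\Delta$) and let $A$ be the part of $G$ drawn in $\Delta'$. Then $A\cup B=G$, $V(A)\cap V(B)=V(C')$, and since the outer face of $G$ lies in $\Delta'$ we have $C\subseteq A$. (If $C=C'$ then $G\langle C'\rangle=G\ne C'$ and $T\langle C'\rangle=T$ is critical by hypothesis, so assume $C\ne C'$.) To invoke Lemma~\ref{SComponent} I would check $B\ne A[V(A)\cap V(B)]=A[V(C')]$: since $G\langle C'\rangle\ne C'$, the graph $B$ contains either a vertex not on $C'$ (necessarily in the interior of $\Delta$) or an edge not on $C'$ with both ends on $C'$ drawn in the interior of $\Delta$, and in neither case does this vertex or edge belong to $A$. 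Hence Lemma~\ref{SComponent} gives that $G[V(B)]$ is $A[V(C')]$-critical.

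It remains to pass from $G[V(B)]$ to $G\langle C'\rangle$. These differ exactly by the set $D$ of chords of $C'$ drawn in $\Delta'$: one has $G[V(B)]=G\langle C'\rangle\cup D$ and $A[V(C')]=C'\cup D$, where every edge of $D$ joins two vertices of $C'$ and $D$ is disjoint from $E(G\langle C'\rangle)$. A short general observation handles this: if $D$ is a set of edges on $V(T)$ disjoint from $E(H)$ and $H\cup D$ is $(T\cup D)$-critical, then $H$ is $T$-critical, since the proper subgraphs of $H\cup D$ containing $T\cup D$ are exactly the graphs $H'\cup D$ for proper $H'\supseteq T$, an $L$-coloring of $T\cup D$ is an $L$-coloring of $T$ that is moreover proper on $D$, and any extension to $H$ of such a coloring is automatically proper on $D$. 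Applying this with $H=G\langle C'\rangle$ and $T=C'$ yields that $G\langle C'\rangle$ is $C'$-critical.

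Finally I would verify that $(G\langle C'\rangle,C',L)$ is a canvas: it is a plane graph with outer cycle $C'$; its vertices off $C'$ lie in the interior of $\Delta$, hence are not on $C$, so they have lists of size at least $5$; being $C'$-critical it is $2$-connected by Lemma~\ref{2Conn}; and, being $2$-connected and different from $C'$, it has an edge $e\notin E(C')$, so criticality applied to $G\langle C'\rangle-e$ produces an $L$-coloring of $C'$. The only real work is the bookkeeping in the first two paragraphs — arranging the split $A,B$ so that Lemma~\ref{SComponent} applies, and reconciling $G[V(B)]$ with $G\langle C'\rangle$; there is no genuine obstacle, since the statement is essentially a packaging of Lemmas~\ref{SComponent} and~\ref{2Conn}.
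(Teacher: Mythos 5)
Your proof is correct and follows essentially the same route as the paper: split $G$ along $C'$ into the inside part $B=G\langle C'\rangle$ and outside part $A$, apply Lemma~\ref{SComponent} to get criticality relative to $A[V(C')]$, and finish with Lemma~\ref{2Conn}. Your extra step handling the set $D$ of chords of $C'$ drawn outside the disk (passing from $(C'\cup D)$-criticality of $G[V(B)]$ to $C'$-criticality of $G\langle C'\rangle$) is exactly the detail the paper compresses into ``and hence $C'$-critical,'' so it is a welcome but not essentially different elaboration.
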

\begin{proof}
Let $B=G\langle C' \rangle$ and let $A$ be obtained from $G$ by deleting all vertices and edges drawn in the open disk bounded by $C'$. By applying Lemma~\ref{SComponent}, it follows that $G\langle C' \rangle$ is $A[V(C')]$-critical,
and hence $C'$-critical. By Lemma~\ref{2Conn}, $T\langle C' \rangle$ is a critical canvas.
\end{proof}

\begin{definition}
Let $T=(G,C,L)$ be a canvas and $G'\subseteq G$ such that $C\subseteq G'$ and $G'$ is $2$-connected. We define the \emph{subcanvas} of $T$ induced by $G'$ to be $(G',C,L)$ and we denote it by $T[G']$.
\end{definition}

Another useful fact is the following. We omit the proof, which is easy.

\begin{proposition}
\label{p:critsubcanvas}
Let $T=(G,C,L)$ be a canvas such that there exists a proper $L$-coloring of $C$ that does not extend to $G$. Then $T$ contains a critical subcanvas.
\end{proposition}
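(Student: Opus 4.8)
The plan is to extract the critical subcanvas by a straightforward minimality argument, and then invoke Lemma~\ref{2Conn} to obtain $2$-connectivity for free. First I would consider the family $\mathcal{S}$ of all subgraphs $G'$ of $G$ with $C\subseteq G'$ to which some $L$-coloring of $C$ fails to extend. By hypothesis $G\in\mathcal{S}$, so $\mathcal{S}$ is nonempty, and as $G$ is finite I may pick $G'\in\mathcal{S}$ minimal with respect to subgraph inclusion. The claim will be that $T[G']=(G',C,L)$ is the desired critical subcanvas.

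To verify that $G'$ is $C$-critical with respect to $L$, I would first note that $G'\neq C$: indeed every $L$-coloring of $C$ extends trivially to $C$ itself, so $C\notin\mathcal{S}$. Now fix an $L$-coloring $\phi$ of $C$ that does not extend to an $L$-coloring of $G'$. For any proper subgraph $G''$ of $G'$ with $C\subseteq G''$, the minimality of $G'$ in $\mathcal{S}$ forces $G''\notin\mathcal{S}$, that is, every $L$-coloring of $C$ --- and in particular $\phi$ --- extends to an $L$-coloring of $G''$. Thus $\phi$ witnesses the defining property of $C$-criticality for this $G''$, and since $G''$ was arbitrary, $G'$ is $C$-critical.

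Finally I would check that $(G',C,L)$ really is a canvas, and hence a subcanvas of $T$: because $G'$ arises from $G$ by deleting vertices and edges drawn in the closed disk bounded by $C$, the cycle $C$ remains the outer cycle of the plane graph $G'$; the inequality $|L(v)|\ge5$ persists for every $v\in V(G')\setminus V(C)\subseteq V(G)\setminus V(C)$; and an $L$-coloring of $C$ exists since $T$ is a canvas. By Lemma~\ref{2Conn} the $C$-criticality of $G'$ already yields that $G'$ is $2$-connected, so $(G',C,L)$ is a canvas, it is critical by the preceding paragraph, and $C\subseteq G'\subseteq G$ with $G'$ $2$-connected exhibits it as the subcanvas $T[G']$.

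There is essentially no obstacle here; the argument is pure bookkeeping. The only point that deserves a moment's thought is that $C$-criticality of $G'$ follows purely from its minimal choice, with no appeal to a coloring-extension lemma such as Lemma~\ref{SComponent}: the reason is precisely that the single coloring $\phi$ extends to \emph{every} proper subgraph of $G'$ containing $C$, which is exactly what minimality provides.
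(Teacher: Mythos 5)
Your argument is correct, and it is precisely the ``easy'' proof the paper omits: take a subgraph $G'\supseteq C$ minimal with the property that some $L$-coloring of $C$ fails to extend, observe that this single failing coloring extends to every proper subgraph of $G'$ containing $C$ (so $G'$ is $C$-critical), and then invoke Lemma~\ref{2Conn} for $2$-connectivity so that $(G',C,L)$ is indeed a critical subcanvas. Nothing further is needed.
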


If $G$ is a graph, we let $N(v)$ denote the set of neighbors of a vertex $v$ of $G$. If $X\subseteq V(G)$, we let $N(X)$ denote the set of vertices of $G$ not in $X$ with at least one neighbor in $X$. If $C$ is a cycle in a graph $G$, then a \emph{chord} of $C$ is an edge $e$ in $E(G)\setminus E(C)$ with both ends in $V(C)$.
The following theorem gives useful information about the structure of critical canvases.

\begin{theorem}\label{CycleChordTripod} (Cycle Chord or Tripod Theorem)\\
If $T=(G,C,L)$ is a critical canvas, then either 
\begin{enumerate}
\item $C$ has a chord in $G$, or
\item there exists a vertex $v\in V(G)\setminus V(C)$ with at least three neighbors on $C$ 
such that at most one of the faces of $G[\{v\}\cup V(C)]$ includes a vertex or edge of $G$.
\end{enumerate}

\end{theorem}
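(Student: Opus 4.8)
The plan is to argue by induction on $|V(G)|$. If $C$ has a chord in $G$ then alternative (1) holds, so assume throughout that $C$ is chordless; we must then exhibit the vertex of alternative (2). First I would record two consequences of criticality. Since $G$ is $C$-critical, $G\ne C$, and as $C$ is chordless this forces $V(G)\setminus V(C)\ne\emptyset$; since $G$ is connected, some internal vertex is adjacent to $V(C)$. Moreover every internal vertex $v$ satisfies $\deg(v)\ge 5$: applying the definition of $C$-criticality to the proper subgraph $G\setminus v\supseteq C$ gives an $L$-coloring $\phi$ of $C$ that extends to $G\setminus v$ but not to $G$, and since no extension of $\phi$ to $G\setminus v$ can be continued over $v$, any such extension $\psi$ has $L(v)\subseteq\psi(N(v))$, whence $5\le|L(v)|\le\deg(v)$.

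The heart of the proof is a minimal-cycle construction feeding into Corollary~\ref{SubCycle}. Consider all cycles of the form $C'=P\cup A$, where $A$ is an arc of $C$ and $P$ is a path joining the two ends of $A$ with all internal vertices in $V(G)\setminus V(C)$; such cycles exist by $2$-connectivity. Choose $C'$ minimizing, primarily, the number of vertices of $G$ drawn strictly inside the disk $\Delta$ bounded by $C'$ on the side away from the outer face, and, secondarily, $|V(P)|$. The first step is to show that then $P$ is a single internal vertex $v$ and nothing of $G$ lies strictly inside $\Delta$: any vertex strictly inside $\Delta$, or any surplus internal vertex of $P$, has all its neighbours in $\overline\Delta$ and can be used to reroute $C'$ so as to strictly decrease one of the two parameters --- typically after replacing $G$ by the strictly smaller critical subcanvas $T\langle C''\rangle$ furnished by Corollary~\ref{SubCycle} for a suitable subcycle $C''$ with $G\langle C''\rangle\ne C''$. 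Theorem~\ref{Thom2} is used to dispose of degenerate subcycles, namely those meeting $V(C)$ in at most two vertices (which are consecutive on $C$, by chordlessness): the remaining boundary vertices are internal in $G$, hence have lists of size at least $3$, so Thomassen's theorem would make such a subcanvas colorable, contradicting criticality. Once $P=v$ and $\Delta$ is empty of $G$, the graph $G[\{v\}\cup V(C)]$ is exactly $C$ together with the spokes from $v$ to its neighbours on $C$, all of which lie on $A$; its bounded faces are the sectors between consecutive spokes, the sectors inside $\Delta$ are empty, and only the ``far'' sector bounded by the two outermost spokes and $C\setminus A$ can contain anything of $G$. If $v$ has at least three neighbours on $C$ we are done; if it has exactly two, one passes once more to a smaller critical subcanvas (rerouting around $v$, which is possible when $A$ has interior vertices) and applies the inductive hypothesis, transferring the resulting tripod vertex back to $T$.

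I expect two points to be the main obstacles. The first is the transfer step: one must verify that a tripod vertex $v'$ produced by induction on a subcanvas $T\langle C'\rangle$ whose outer cycle runs through an interior vertex $v$ of $G$ is genuinely a tripod vertex of $T$ --- that $v'\notin V(C)$, that $v'$ has at least three neighbours on $C$ itself (here the possibility that $v$ is among the neighbours of $v'$ on $C'$ must be excluded or absorbed, using $\deg(v)\ge5$ and the freedom in the choice of the minimal cycle), and, crucially, that all of $G$ lying outside $\Delta$ together with the unique possibly-nonempty sector of $v'$ inside $\Delta$ falls within a single sector of $G[\{v'\}\cup V(C)]$; it is this last requirement that fixes the precise minimality parameters. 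The second obstacle is the residual case in which $v$ has exactly two neighbours on $C$ that are the ends of an edge of $C$, so that $v$ bounds an empty triangle with a boundary edge and no rerouting shrinks the graph; handling it seems to require exploiting the rigidity that criticality imposes on the colorings of $G$ near $v$ --- via the degree bound together with the $L$-coloring guaranteed by criticality applied to an edge incident with $v$. The rest is routine: repeated use of Corollary~\ref{SubCycle}, the degree bound, Theorem~\ref{Thom2}, and planar bookkeeping.
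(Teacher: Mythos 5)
Your plan is a genuinely different route from the paper (induction on a minimal separating cycle rather than a global argument), but as written it has gaps at exactly the decisive points, and the one concrete colouring step you do supply is invalid. You propose to ``dispose of degenerate subcycles'' $C''$ meeting $V(C)$ in at most two vertices by saying that the remaining boundary vertices have lists of size at least $3$, so Theorem~\ref{Thom2} makes the subcanvas colourable, ``contradicting criticality.'' That does not follow: criticality of $T\langle C''\rangle$ (via Corollary~\ref{SubCycle}) concerns a precolouring of the \emph{entire} cycle $C''$, including all of its vertices that are internal in $G$, whereas Theorem~\ref{Thom2} permits at most two adjacent precoloured vertices; mere colourability of the subcanvas contradicts nothing, and critical canvases can perfectly well contain cycles with at most two vertices on $C$ and nonempty interior. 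The colouring step that actually works points in the opposite direction: one needs a disk whose boundary contains at most \emph{one} vertex off $C$ and all of whose interior vertices have at most \emph{two} neighbours on $C$; then, by criticality of $T$ itself, one colours everything outside the open disk, deletes $V(C)$, strips from each remaining list the colours of its neighbours on $C$ (leaving lists of size at least $3$ precisely because of the two-neighbour condition), and applies Theorem~\ref{Thom2} with $S$ the single boundary vertex off $C$. Some step of this kind is indispensable, because without the colouring hypothesis the statement is false: a plane graph in which every internal vertex has at most two neighbours on $C$ has neither a chord nor a tripod. Your sketch contains no such mechanism, and the two places where you defer to ``rigidity imposed by criticality'' are exactly where the content of the theorem lives.

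Concretely, both of your acknowledged obstacles are real and unresolved. In the residual case where $v$ has exactly two neighbours $a_1,a_2$ on $C$ with $a_1a_2\in E(C)$ and the triangle empty, the rerouted graph $G\langle C''\rangle=G-a_1a_2$ has the same vertex set, so your induction on $|V(G)|$ does not even apply; and in the transfer step a tripod $v'$ of the subcanvas may have $v$ among its three neighbours on $C''$, or its unique nonempty sector may split or merge once $\Delta$ and the interior of $A$ are restored, so it need not be a tripod of $T$ --- no argument is offered for either difficulty. For comparison, the paper avoids induction and transfer altogether: it lets $X$ be the set of all vertices with at least three neighbours on $C$, forms the ``spoke'' subgraph $G'$ with vertex set $V(C)\cup X$ and only the $C$--$X$ and $C$ edges, proves by the Thomassen argument sketched above that every face of $G'$ incident with at most one vertex of $X$ contains no vertex or edge of $G$, and then a planarity argument (the incidence graph between $X$ and the faces meeting $X$ at least twice is a tree, and a leaf of it lies in $X$) produces the desired vertex directly. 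If you want to pursue your inductive scheme, you would need to build the analogue of that face-emptiness claim into your minimality argument and find a genuine argument for the empty-triangle and transfer cases; as it stands, the proposal is a plan rather than a proof.
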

\begin{proof}
Suppose $C$ does not have a chord. Let $X$ be the set of vertices with at least three neighbors on $C$. Let $G'$ be the subgraph of $G$ defined by $V(G')=C\cup X$ and $E(G')=E(G[C\cup X])-E(G[X])$. 

We claim that if $f$ is face of $G'$ such that $f$ is incident with at most one vertex of $X$, then $f$ does not include a vertex or edge of $G$. Suppose not. Let $C'$ be the boundary of $f$. As $C$ has no chords and every edge with one end in $X$ and the other in $C$ is in $E(G')$, it follows that $C'$ has no chords. As $T$ is critical, there exists an $L$-coloring $\phi$ of $G\setminus (V(G\langle C' \rangle) \setminus V(C'))$ which does not extend to $G$. Hence, the restriction of $\phi$ to $C'$ does not extend to $V(G\langle C' \rangle$). Let $G''= G\langle C'\rangle \setminus V(C)$, $S=V(C')\setminus V(C)$, $L'(v)=\{\phi(v)\}$ for $v\in S$ and $L'(v)=L(v)\setminus \{\phi(x): x\in V(C)\cap N(v)\}$ for $v\in V(G'')\setminus S$. Note that $|L'(v)|\ge 3$ for all $v\not \in S$ by definition of $X$. By Theorem~\ref{Thom2}, there exists an $L'$-coloring of $G''$ and hence an $L$-coloring of $G$ which extends $\phi$, a contradiction. This proves the claim.

As $T$ is critical, $G\ne C$. As $C$ has no chords, it follows from the claim above that $X\ne \emptyset$. Let $\F$ be the set of internal faces of $G'$ incident with at least two elements of $X$. Consider the graph $H$ whose vertices are $X\cup \F$, where a vertex $x\in X$ is adjacent to $f\in \F$ if $x$ is incident with $f$. By planarity, $H$ is a tree. Let $v$ be a leaf of $H$. By the definition of $H$, $v\in X$. Hence at most one of the faces of $G[\{v\}\cup V(C)]$ is incident with another vertex of $X$. Yet all other faces of $G[\{v\}\cup V(C)]$ are incident with only one element of $X$, namely $v$, and so by the claim above, these faces do not include a vertex or edge of $C$, as desired.
\end{proof}

We also need the following proposition, where the first  statement is a consequence of Theorem~\ref{Thom2} and the second follows directly.

\begin{proposition}\label{Facts}
If $T=(G,C,L)$ is a critical canvas, then
\begin{enumerate}
\item[{\rm(1)}] for every cycle $C'$ of $G$ of length at most four, $V(G\langle C'\rangle) =V( C')$, and
\item[{\rm(2)}] every vertex in $V(G)\setminus V(C)$ has degree at least five.
\end{enumerate}
\end{proposition}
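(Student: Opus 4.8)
The plan is to prove (2) in one line from criticality and then concentrate on (1). For (2): if some vertex $v\in V(G)\setminus V(C)$ had degree at most four, then, since $G-v$ is a proper subgraph of $G$ containing $C$ and $G$ is $C$-critical, there would be an $L$-coloring $\phi$ of $C$ extending to an $L$-coloring $\psi$ of $G-v$ but not to an $L$-coloring of $G$; but $|L(v)|\ge 5>\deg(v)\ge|\psi(N(v))|$, so $\psi$ extends to $v$, producing an $L$-coloring of $G$ extending $\phi$, a contradiction. (Part (1) is not needed for this.)

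For (1), suppose a cycle $C'$ of $G$ with $3\le|V(C')|\le 4$ satisfies $V(G\langle C'\rangle)\ne V(C')$; then $G\langle C'\rangle\ne C'$, so by Corollary~\ref{SubCycle} the canvas $T\langle C'\rangle=(G\langle C'\rangle,C',L)$ is critical, and I want a contradiction. First take $|V(C')|=3$, say $C'=v_1v_2v_3$; then $C'$ has no chord, so every proper $L$-coloring $\phi$ of $C'$ uses three distinct colors. Delete $v_3$ and set $H=G\langle C'\rangle-v_3$. Since $v_3$ lies on the outer cycle of $G\langle C'\rangle$, deleting it puts every neighbour of $v_3$ onto the outer face of $H$; in particular no vertex of $H$ off the outer face is a neighbour of $v_3$. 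Now apply Theorem~\ref{Thom2} to $H$ with $S=\{v_1,v_2\}$, giving $v_1,v_2$ the one-element lists $\{\phi(v_1)\},\{\phi(v_2)\}$ (these are disjoint and $v_1v_2$ is an edge), and giving every other vertex $v$ the list $L(v)\setminus\{\phi(v_3)\}$ if $v\in N(v_3)$ and $L(v)$ otherwise. A vertex on the outer face of $H$ other than $v_1,v_2$ lies outside $V(C')$, hence has a list of size $\ge 5-1\ge 3$; a vertex off the outer face is a non-neighbour of $v_3$ outside $V(C')$, hence keeps its list of size $\ge 5$. Theorem~\ref{Thom2} produces an $L$-coloring of $H$ which, together with $\phi(v_3)$, is an $L$-coloring of $G\langle C'\rangle$ extending $\phi$. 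As $\phi$ was arbitrary, this contradicts the criticality of $T\langle C'\rangle$, which demands some proper $L$-coloring of $C'$ that does not extend to $G\langle C'\rangle$.

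For $|V(C')|=4$, say $C'=v_1v_2v_3v_4$, I would split into two cases. If $C'$ has a chord in $G\langle C'\rangle$, that chord cuts the closed disk bounded by $C'$ into two closed disks bounded by triangles of $G$; by the triangle case just proved neither triangle has a vertex strictly inside, so $V(G\langle C'\rangle)=V(C')$, a contradiction. If $C'$ has no chord in $G\langle C'\rangle$, repeat the triangle argument with two vertices deleted instead of one: set $H=G\langle C'\rangle-\{v_3,v_4\}$ (where $v_3v_4\in E(C')$), observe that since $v_3,v_4$ lie on the outer cycle every vertex of $H$ adjacent in $G\langle C'\rangle$ to $v_3$ or $v_4$ lies on the outer face of $H$, and apply Theorem~\ref{Thom2} with $S=\{v_1,v_2\}$ and the lists $L(v)\setminus\{\phi(u):u\in\{v_3,v_4\}\cap N(v)\}$ off $S$ — of size $\ge 5-2\ge 3$ on the outer face and unchanged of size $\ge 5$ off it. Colouring $v_3,v_4$ by $\phi$ and $H$ by the resulting coloring is a proper $L$-coloring of $G\langle C'\rangle$: the edges of $C'$ force the required inequalities, and the only other possible conflicts, $v_1v_3$ and $v_2v_4$, are non-edges because $C'$ has no chord. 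Again every proper $L$-coloring of $C'$ extends, contradicting criticality.

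The substance here is light and consists of two points that need a little care. The first is the observation that deleting a vertex lying on the outer cycle pushes all of its neighbours onto the outer face, so that the colours removed when we precolor that vertex cost nothing on the \emph{interior} lists, which therefore stay at size $\ge 5$ as Theorem~\ref{Thom2} requires; this is what makes the deletion argument go through. The second is that for four-cycles one must peel off the chorded case separately and reduce it to triangles, since a precoloring of $C'$ that makes the two ends of a chord equal need not extend, so the deletion argument cannot be applied directly there. Everything else is routine bookkeeping of list sizes against the hypotheses of Theorem~\ref{Thom2}.
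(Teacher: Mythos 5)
Your proof is correct and follows exactly the route the paper intends: the paper gives no details, merely noting that (1) is a consequence of Theorem~\ref{Thom2} and that (2) "follows directly," and your argument supplies those details in the standard way (the degree argument for (2), and for (1) the vertex-deletion/list-reduction application of Theorem~\ref{Thom2} inside the critical canvas $T\langle C'\rangle$ furnished by Corollary~\ref{SubCycle}, with the chorded $4$-cycle case correctly reduced to triangles).
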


\section{Deficiency}\label{subsec:deficiency}

In this section we introduce the notion of deficiency, which will play a pivotal role in the rest of the paper,
 and we prove several basic lemmas about deficiency.

\begin{definition}
Let $G$ be a plane graph with outer cycle $C$.
We say that a vertex $v\in V(G)$ is {\em internal} if $v\not\in V(C)$.
 We denote the number of internal vertices by $v(G)$, and we
define the \emph{deficiency} of $G$, denoted by ${\rm def}(G)$, as
$${\rm def}(G):=|E(G)\setminus E(C)|-3v(G).$$
If $T=(G,C,L)$ is a canvas, then we define $v(T):=v(G)$ and ${\rm def}(T):={\rm def}(G)$.
\end{definition}

%


\begin{definition}
Let $G$ be  a $2$-connected plane graph.
We let $\F(G)$ denote the set of internal faces of $G$. 
If $f$ is a face of $G$, then we let $|f|$ denote the length of the cycle bounding $f$.
Likewise, if $C$ is a cycle in $G$, then we denote its length by $|C|$.
For $f\in\F(G)$ we let $C_f$ be the cycle bounding $f$. We denote by $G[f]$ the subgraph $G\langle C_f \rangle$. If $T=(G,C,L)$ is a canvas and $f$ is a face of $G$, let $T[f]$ denote the canvas $T\langle C_f \rangle$, that is, $(G[f],C_f, L)$.
\end{definition}

The following is an equivalent formula for the deficiency of a $2$-connected plane graph.

\begin{lemma} \label{deffaces}
If $G$ is a $2$-connected plane graph with outer cycle $C$, then
$${\rm def}(G)=|C|-3 - \sum_{f\in \F(G)} (|f|-3).$$
\end{lemma}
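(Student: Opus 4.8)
The statement to prove is Lemma~\ref{deffaces}: for a $2$-connected plane graph $G$ with outer cycle $C$,
$${\rm def}(G)=|C|-3-\sum_{f\in\F(G)}(|f|-3).$$

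The plan is a straightforward application of Euler's formula together with edge-face incidence counting. Since $G$ is $2$-connected and plane, every face boundary is a cycle, so $|f|$ is well-defined for every face, including the outer face $C$ with $|C|$ edges on its boundary. Let $n=|V(G)|$, $m=|E(G)|$, and let $F$ be the total number of faces, so $|\F(G)|=F-1$. Euler's formula gives $n-m+F=2$. Double-counting edge-face incidences, each internal edge lies on exactly two face boundaries and each edge of $C$ lies on the boundary of $C$ (the outer face) and exactly one internal face; since every face boundary is a cycle, $\sum_{f\in\F(G)}|f| + |C| = 2m$, hence $\sum_{f\in\F(G)}|f| = 2m-|C|$.

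Next I would translate the left-hand side. By definition ${\rm def}(G)=|E(G)\setminus E(C)|-3v(G) = (m-|C|)-3(n-|C|) = m+2|C|-3n$. On the other side, $\sum_{f\in\F(G)}(|f|-3) = \sum_{f\in\F(G)}|f| - 3(F-1) = (2m-|C|) - 3(F-1)$. Therefore
$$|C|-3-\sum_{f\in\F(G)}(|f|-3) = |C|-3-(2m-|C|)+3(F-1) = 2|C|-2m+3F-6.$$
Using Euler's formula in the form $3F = 3m-3n+6$, this becomes $2|C|-2m+(3m-3n+6)-6 = 2|C|+m-3n$, which equals ${\rm def}(G)$ as computed above. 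This completes the proof.

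There is essentially no hard part here; the only things to be careful about are the bookkeeping with the outer face (it is excluded from $\F(G)$, so $|\F(G)|=F-1$, but its boundary still contributes $|C|$ to the edge-face incidence sum), and the fact that $2$-connectedness is exactly what guarantees all face boundaries are cycles so that the incidence count $\sum_f |f| + |C| = 2m$ is valid (an edge in a bridgeless plane graph borders two distinct faces). Both of these are already set up by the hypotheses and the preceding definitions, so the proof is just the chain of equalities above.
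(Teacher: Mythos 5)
Your proof is correct and follows essentially the same route as the paper: Euler's formula combined with the edge--face incidence count $\sum_{f\in\F(G)}|f|=2|E(G)|-|C|$, followed by algebraic bookkeeping. The only difference is organizational (you compute both sides separately and equate them, while the paper manipulates one chain of equalities), and your explicit remarks on why $2$-connectedness makes the incidence count valid are a nice touch but do not change the substance.
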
  

\begin{proof}
Euler's formula gives $|C|+v(G)+|\F(G)|+1=|E(G)|+2$, and hence

\begin{centering}
$|C|-3 - \sum_{f\in \F(G)} (|f|-3)=|C|-3-(2|E(G)|-|C|)+3|\F(G)|=$\\
$2|C|-2|E(G)|-3+3|E(G)|-3|C|-3v(G)+3=$\\
$|E(G)\setminus E(C)|-3v(G)={\rm def}(G)$,\\
\end{centering}
\noindent 
as desired.
\end{proof}

\noindent 
In fact, it was the above formula that led us to the notion of deficiency. However, for most of the proof
it is more convenient to use the definition of deficiency. We will not need Lemma~\ref{deffaces} until
the proof of Theorem~\ref{LinearCycle0} in Section~\ref{sec:conseqstronglin}.

%
%

\begin{lemma} \label{defsum}
If $G$ is a $2$-connected plane graph with outer cycle $C$ and $G'$ is a $2$-connected subgraph of $G$ 
containing $C$, then
$${\rm def}(G) = {\rm def}(G') + \sum_{f\in \F(G')} {\rm def}(G[f]).$$ 
\end{lemma}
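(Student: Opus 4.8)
The plan is to prove the additivity of deficiency by a direct counting argument, decomposing the edges and internal vertices of $G$ according to the "cell structure" imposed by the subgraph $G'$. Since $G'$ is a $2$-connected subgraph of $G$ containing $C$, each of its internal faces $f$ is bounded by a cycle $C_f$, and the closed disks bounded by these cycles, together with $G'$ itself, cover the closed disk bounded by $C$. First I would observe that every vertex of $G$ lies either on $G'$ or in the open disk bounded by exactly one $C_f$; and every edge of $G$ lies either in $G'$ or inside exactly one such face $f$. In the notation of the lemma, $G[f] = G\langle C_f\rangle$, so the edges of $G$ inside the closed disk bounded by $C_f$ are precisely $E(G[f])$, and those strictly inside are $E(G[f])\setminus E(C_f)$.

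With this decomposition in hand, the next step is to expand the right-hand side using the definition ${\rm def}(H) = |E(H)\setminus E(C_H)| - 3v(H)$ for each relevant plane graph. For $G'$ with outer cycle $C$ we get ${\rm def}(G') = |E(G')\setminus E(C)| - 3v(G')$, where $v(G')$ counts vertices of $G'$ not on $C$. For each $f \in \F(G')$, the outer cycle of $G[f]$ is $C_f$, so ${\rm def}(G[f]) = |E(G[f])\setminus E(C_f)| - 3\,v(G[f])$, where $v(G[f])$ counts vertices of $G[f]$ strictly inside $C_f$ (a vertex on $C_f$ may still be internal in $G$, i.e.\ not on $C$, but it is not counted by $v(G[f])$). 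I would then sum over all $f$ and match terms: the edge counts $|E(G')\setminus E(C)|$ and $\sum_f |E(G[f])\setminus E(C_f)|$ together enumerate every edge of $G$ not on $C$ exactly once, since an edge of $E(C_f)\setminus E(C)$ for some $f$ is an edge of $G'$ not on $C$ (each chord-type edge of $G'$ bounds faces of $G'$ but is itself in $G'$, not counted in any $E(G[f])\setminus E(C_f)$), and an edge strictly inside some $C_f$ lies in exactly one face. Similarly the vertex counts: $v(G')$ counts internal-to-$G'$ vertices on $G'$, and $\sum_f v(G[f])$ counts vertices strictly inside the various $C_f$; together, with the factor $3$, these give $3v(G)$ because every internal vertex of $G$ is counted exactly once.

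The one genuine subtlety — and the step I expect to need the most care — is the bookkeeping of vertices that lie on some face cycle $C_f$ but are internal vertices of $G$ (i.e.\ not on $C$). Such a vertex contributes to $v(G)$ but is \emph{not} counted by $v(G[f])$ for any $f$; it is, however, counted by $v(G')$ since it belongs to $G'$ and is not on $C$. Dually, a vertex of $C$ is never counted on either side. So the identity "$v(G') + \sum_{f}v(G[f]) = v(G)$" holds because the three classes — vertices on $C$, internal vertices lying on $G'$, internal vertices strictly inside some $C_f$ — partition $V(G)$, and the first class is counted by nothing, the second exactly by $v(G')$, the third exactly by $\sum_f v(G[f])$. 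I would spell this partition out explicitly, perhaps citing the fact that the open disks bounded by the $C_f$ are pairwise disjoint and, together with the point set of $G'$, exhaust the closed disk bounded by $C$. Once the edge partition and vertex partition are both established, the lemma follows by substituting into the definitions and collecting terms; no Euler's-formula manipulation is needed here (that was Lemma~\ref{deffaces}), only the combinatorics of the planar subdivision.

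As a sanity check at the end, I would verify the degenerate cases: if $G' = G$ then $\F(G')$ consists of the internal faces of $G$, each $C_f$ bounds a face so $G[f] = C_f$ and ${\rm def}(G[f]) = 0$, giving ${\rm def}(G) = {\rm def}(G')$; and if some $G[f] = C_f$ the corresponding summand vanishes, consistent with the formula. This confirms the statement is correctly normalized and the proof plan is complete.
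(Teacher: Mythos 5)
Your proof is correct and takes essentially the same approach as the paper: the paper's one-sentence argument is precisely the observation that every internal vertex of $G$, and every edge of $G$ not in $E(C)$, is accounted for exactly once between $G'$ and the graphs $G[f]$ for $f\in\F(G')$. Your more detailed bookkeeping (vertices lying on some $C_f$ but not on $C$ being charged to $v(G')$, boundary edges $E(C_f)\setminus E(C)$ being charged to $G'$) just makes that partition explicit.
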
  

\begin{proof}
The lemma  follows from the fact that every internal vertex of $G$ 
 is an internal vertex of exactly one of the graphs
$G'$ and $G[f]$ for $f\in \F(G')$, and the same holds for edges not incident with the outer face.
\end{proof}

\begin{theorem}\label{CycleDef} (Cycle Sum of Faces Theorem)\\
If $T=(G,C,L)$ is a critical canvas, then ${\rm def}(T)\ge 1$.
\end{theorem}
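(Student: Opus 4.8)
The plan is to prove this by induction on $|V(G)|$, using the structural information provided by Theorem~\ref{CycleChordTripod} (the Cycle Chord or Tripod Theorem) together with the additivity of deficiency from Lemma~\ref{defsum}. Since $T$ is critical, $G \ne C$, and by Theorem~\ref{CycleChordTripod} one of two cases holds.

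\emph{Case 1: $C$ has a chord $e$ in $G$.} Then $e$ splits the closed disk bounded by $C$ into two closed disks bounded by cycles $C_1$ and $C_2$, each using $e$ and a portion of $C$. Let $G_i = G\langle C_i\rangle$. Each $G_i$ is a proper subgraph of $G$ containing a cycle of length less than $|C|$ as its outer cycle. By Corollary~\ref{SubCycle}, if $G_i \ne C_i$ then $T\langle C_i\rangle$ is a critical canvas, so by induction $\mathrm{def}(G_i) \ge 1$; and if $G_i = C_i$ then $\mathrm{def}(G_i) = |C_i| - 3 \ge 0$ by Lemma~\ref{deffaces}. Apply Lemma~\ref{defsum} with $G' = C_1 \cup C_2$ (which is $2$-connected and contains $C$): its internal faces are exactly those of $G_1$ and $G_2$, so $\mathrm{def}(G) = \mathrm{def}(C_1\cup C_2) + \mathrm{def}(G_1) + \mathrm{def}(G_2)$. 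A short computation with the definition of deficiency gives $\mathrm{def}(C_1 \cup C_2) = |E(C_1\cup C_2)| - |E(C)| = 1$ (the one extra edge being the chord $e$), and since at least one of $G_1, G_2$ must differ from its outer cycle — otherwise $G = C_1 \cup C_2$, which would force a proper subgraph containing $C$ onto which no coloring fails, contradicting criticality — we get $\mathrm{def}(G) \ge 1 + 1 + 0 = 2 \ge 1$. (One must be slightly careful about the case $G = C_1 \cup C_2$: here $G$ is just $C$ plus a chord, and one checks directly via Theorem~\ref{Thom2} that such a canvas cannot be critical, or alternatively that $\mathrm{def}(G) = 1$ already.)

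\emph{Case 2: there is a vertex $v \notin V(C)$ with $\ge 3$ neighbors on $C$, such that at most one face of $G_0 := G[\{v\}\cup V(C)]$ contains a vertex or edge of $G$.} Write $N(v) \cap V(C) = \{u_1, \dots, u_k\}$ in cyclic order, $k \ge 3$; the edges $vu_i$ and the arcs of $C$ between consecutive $u_i$'s cut the disk into $k$ cyclic ``triangle-like'' regions $R_1, \dots, R_k$, bounded by cycles $C_1, \dots, C_k$, and by hypothesis at most one $R_j$ contains anything of $G$ beyond its boundary. For each $i \ne j$, $G\langle C_i\rangle = C_i$, so $\mathrm{def}(G\langle C_i\rangle) = |C_i| - 3 \ge 0$. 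For $i = j$, either $G\langle C_j\rangle = C_j$ (again deficiency $\ge 0$) or by Corollary~\ref{SubCycle} $T\langle C_j\rangle$ is critical and induction gives $\mathrm{def}(G\langle C_j\rangle) \ge 1$. Now apply Lemma~\ref{defsum} with $G' = G_0$ (which is $2$-connected since $v$ has $\ge 2$ neighbors on $C$ and $G$ is a plane graph with outer cycle $C$): its internal faces are the $R_i$, so $\mathrm{def}(G) = \mathrm{def}(G_0) + \sum_i \mathrm{def}(G\langle C_i\rangle)$, and it remains to show $\mathrm{def}(G_0) \ge 1$. But $G_0$ has exactly one internal vertex, namely $v$, and $|E(G_0)\setminus E(C)| = \deg_{G_0}(v) = k$, so $\mathrm{def}(G_0) = k - 3 \ge 0$; combined with the $\mathrm{def}(G\langle C_j\rangle)$ term and the sum of $(|C_i|-3) \ge 0$ terms, I need one of these to contribute the needed $+1$. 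If $k \ge 4$ we are done immediately. If $k = 3$, then $\mathrm{def}(G_0) = 0$, and I claim $G$ cannot equal $G_0$ when $k = 3$: if it did, $G$ would be $C$ plus a single vertex $v$ of degree $3$, contradicting Proposition~\ref{Facts}(2) which requires internal vertices to have degree $\ge 5$. Hence some $R_j$ properly contains part of $G$; but if $j$ is the unique such region then $G\langle C_j\rangle \ne C_j$, and either it is critical (induction: $\mathrm{def} \ge 1$) — giving $\mathrm{def}(G) \ge 0 + 1 = 1$ — or one argues directly.

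\emph{Main obstacle.} The delicate point is handling the degenerate base-type cases cleanly: showing that $G \ne C_1 \cup C_2$ in Case~1 and $G \ne G_0$ in Case~2, and more generally that the various $G\langle C_i\rangle$ that are not critical are just their bounding cycles, so their deficiency is the manifestly nonnegative quantity $|C_i| - 3$. I expect this bookkeeping — precisely tracking which subcanvases are critical (to invoke induction) versus trivial (to use $|C_i| - 3 \ge 0$), and squeezing out the one unit of deficiency in the $k=3$ subcase — to be where the real care is needed; the additivity formula and the structure theorem do essentially all the conceptual work. An alternative to induction would be to argue directly from Theorem~\ref{Thom2}: if $\mathrm{def}(T) \le 0$, i.e.\ $|E(G)\setminus E(C)| \le 3 v(G)$, then a discharging or counting argument shows $G$ has too few edges to be critical, since Theorem~\ref{Thom2} would let any precoloring of $C$ extend; but the inductive route via Lemma~\ref{defsum} seems cleaner given the tools already assembled.
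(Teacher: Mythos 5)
Your argument is essentially the paper's proof: induction on $|V(G)|$, Theorem~\ref{CycleChordTripod} to split into the chord and tripod cases, Lemma~\ref{defsum} applied to $G'=C+e$ (contributing the $+1$) in the first case, and Corollary~\ref{SubCycle} together with Proposition~\ref{Facts}(2) to squeeze out the unit in the second, so both the decomposition and the bookkeeping match the paper's. Two small slips, neither fatal: a cycle subcanvas $G_i=C_i$ has ${\rm def}(G_i)=0$, not $|C_i|-3$ (Lemma~\ref{deffaces} subtracts the single internal face), though you only use nonnegativity; and your side-claim in Case~1 that $G=C_1\cup C_2$ would contradict criticality is false, since $C$ plus a chord is $C$-critical whenever some $L$-coloring of $C$ gives the chord's ends the same color, but your parenthetical fallback ${\rm def}(G)=1$ is exactly what is needed and is how the paper argues (it only ever uses ${\rm def}(T_i)\ge 0$).
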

\begin{proof}
We proceed by induction on the number of vertices of $G$. We apply Theorem~\ref{CycleChordTripod} to~$T$.
Suppose first that (1) holds; that is, there is a chord $e$ of $C$. Let $C_1,C_2$ be the cycles of $C+e$ other than $C$. Hence $|V(C_1)|+|V(C_2)|=|V(C)|+2$. Let $T_1=T\langle C_1 \rangle=(G_1,C_1,L)$ and $T_2=T\langle C_2 \rangle=(G_2,C_2,L)$. By Lemma~\ref{defsum} applied to $G'=C+e$, ${\rm def}(T)= {\rm def}(T_1)+{\rm def}(T_2)+1$.
By Corollary~\ref{SubCycle}, for $i\in\{1,2\}$, either $T_i$ is critical or $G_i=C_i$. If $G_i\ne C_i$, then ${\rm def}(T_i)\ge 1$ by induction. If $G_i = C_i$, then ${\rm def}(T_i)=0$ by definition. In either case, ${\rm def}(T_i)\ge 0$. Thus ${\rm def}(T)\ge 0+0+1\ge 1$, as desired.

So we may suppose that (2) holds; that is, there exists an internal vertex $v$ of $G$ 
such that $v$ is adjacent to at least three vertices of $C$ and at most one of the faces of $G[\{v\} \cup V(C)]$ includes a vertex or edge of $G$. 
Let $G'=G[\{v\}\cup C]$. First suppose that none of the faces of $G'$ includes a vertex or edge of $G$, and hence $V(G)=V(C)\cup \{v\}$. As $G$ is $C$-critical, it follows from Proposition~\ref{Facts}(2) that $v$ must have degree at least five. Thus, ${\rm def}(T)\ge 2$, as desired.

So we may suppose that exactly one of the faces of $G'$ includes a vertex or edge of $G$. 
Let $C'$ be the boundary of that face. 
We have 
$${\rm def}(T)\ge {\rm def}(T\langle C'\rangle)\ge1,$$
where the first inequality follows from the definition of deficiency and the second by induction, because
$T \langle C' \rangle$ is a critical canvas by Corollary~\ref{SubCycle}.
\end{proof}


To handle critical canvases with at most four internal vertices will need the following inequality.

\begin{lemma} \label{defbound}
If $G$ is a $2$-connected plane graph with outer cycle $C$ and every internal vertex of $G$
has degree at least five, then
$${\rm def}(G) \ge 2v(G)-|E(G\setminus V(C))|,$$
with equality if and only if every vertex  of $G$
has degree exactly five.
\end{lemma}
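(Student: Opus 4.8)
The plan is to work directly from the definition ${\rm def}(G)=|E(G)\setminus E(C)|-3v(G)$ and split the edges outside $E(C)$ according to how many of their ends are internal. Let $v=v(G)$, let $E_2$ be the set of edges of $G$ with both ends internal (so $|E_2|=|E(G\setminus V(C))|$), and let $E_1$ be the set of edges of $G$ with exactly one end internal. Every edge counted by $|E(G)\setminus E(C)|$ lies in $E_1\cup E_2$, except possibly chords of $C$; since chords only make the left-hand side larger, it suffices to prove the inequality assuming no chords, i.e.\ $|E(G)\setminus E(C)|=|E_1|+|E_2|$. Then ${\rm def}(G)=|E_1|+|E_2|-3v$, so the claimed bound ${\rm def}(G)\ge 2v-|E_2|$ is equivalent to $|E_1|+2|E_2|\ge 5v$.

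The key step is the degree count. Summing $\deg_G(u)$ over all internal vertices $u$, each edge of $E_2$ is counted twice and each edge of $E_1$ is counted once, so $|E_1|+2|E_2|=\sum_{u \text{ internal}}\deg_G(u)$. Since every internal vertex has degree at least $5$, this sum is at least $5v$, which gives exactly $|E_1|+2|E_2|\ge 5v$ and hence the inequality. For the equality statement: equality in ${\rm def}(G)\ge 2v-|E(G\setminus V(C))|$ forces both that $G$ has no chords (otherwise the chord contributes a strict surplus to $|E(G)\setminus E(C)|$) and that $\sum_{u\text{ internal}}\deg_G(u)=5v$, i.e.\ every internal vertex has degree exactly $5$. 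Conversely, if every vertex of $G$ has degree exactly five, then in particular there are no chords (a chord would give a vertex of $C$ degree contributions making the argument go through — more directly, one checks the edge count balances), every internal vertex has degree $5$, and all inequalities above become equalities. I should double-check the chord bookkeeping here: a chord of $C$ has both ends on $C$, so it is counted in $|E(G)\setminus E(C)|$ but not in $|E_1|+|E_2|$; thus in general $|E(G)\setminus E(C)|\ge |E_1|+|E_2|$ with equality iff $C$ has no chord, and this is the only slack on that side.

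The main obstacle, such as it is, is getting the equality characterization stated cleanly: the hypothesis only constrains internal vertices, but the conclusion's equality case mentions \emph{every} vertex of $G$. The resolution is that equality forces no chords, and once there are no chords the degree-sum identity is tight exactly when all internal degrees equal $5$; the phrase ``every vertex of $G$ has degree exactly five'' in the converse direction is a sufficient condition that is genuinely stronger than what equality forces, but it is still an ``if and only if'' because ``every vertex has degree five'' $\Rightarrow$ ``no chords and every internal vertex has degree five'' $\Rightarrow$ equality, and equality $\Rightarrow$ ``no chords and every internal vertex has degree five''. Wait — that last chain does not give back ``every vertex of $G$ has degree five,'' so I need to be careful: presumably the intended reading, consistent with how the lemma is applied, is that one also knows (or may assume) $G$ has no vertices of $C$ of degree other than five in the relevant application, or the statement should be read as the equivalence between equality and ``every internal vertex has degree five and $C$ has no chord.'' I would state and prove the clean version ${\rm def}(G)=2v(G)-|E(G\setminus V(C))|$ iff $C$ has no chord and every internal vertex has degree exactly five, and remark that this specializes to the stated form.
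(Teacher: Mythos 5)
Your argument is correct and is essentially the paper's proof: the paper's one displayed line is exactly your degree count, namely $|E(G)\setminus E(C)|+|E(G\setminus V(C))|\ge\sum_{u\notin V(C)}\deg(u)\ge 5v(G)$, where the slack is the number of chords of $C$ plus the excess of the internal degrees over five. Your unease about the equality clause is justified: what the count actually establishes is precisely your clean version (equality holds if and only if $C$ has no chord and every \emph{internal} vertex has degree exactly five), and the lemma's phrase ``every vertex of $G$ has degree exactly five'' is loose -- neither direction of that literal reading follows from the computation. This causes no harm downstream, since the only use of the equality case (in Claim~\ref{Base}, when $v(T)=4$) is the implication ``equality $\Rightarrow$ every internal vertex has degree exactly five,'' which both formulations deliver; but you should drop the parenthetical suggestion that all degrees being five forces the absence of chords, as that is neither needed nor justified by the argument.
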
  

\begin{proof}
By Proposition~\ref{Facts}(2)
$${\rm def}(G) =|E(G)\setminus E(C)|-3v(G)\ge 5v(G)- |E(G\setminus V(C))|-3v(G)=2v(G)-|E(G\setminus V(C))|,$$
with equality if and only if every vertex  of $G$
has degree exactly five.
\end{proof}

\section{Linear Bound for Cycles}\label{subsec:LinearCycle0}

The purpose of this section is to state Theorem~\ref{StrongLinear}, the desired strengthening of Theorem~\ref{LinearCycle0}.
First a few definitions. If $G$ is a plane graph and $u,v\in V(G)$, then we say that $u$ and $v$ are \emph{cofacial} if there exists a face $f$ of $G$ such that $u$ and $v$ are both incident with $f$.

\begin{definition}
Let $G$  be a $2$-connected plane graph with outer cycle $C$.
We define the \emph{boundary} of $G$, denoted by $B(G)$, as $N(V(G))$. 
We also define the \emph{quasi-boundary} of $G$, denoted by $Q(G)$, as the set of vertices not in $C$ that are cofacial 
with at least one vertex of~$C$. We let $b(G) := |B(G)|$ and $q(G) := |Q(G)|$. Note that $B(G)\subseteq Q(G)$.

If  $T=(G,C,L)$ is a canvas, then we extend the above notions to $T$ in the obvious way,
so that we can speak of the boundary or quasi-boundary of $T$, and we define $B(T):=B(G)$
and similarly for all the other quantities.
\end{definition}

For the rest of this paper let $\epsilon,\alpha > 0$ be fixed positive real numbers.
Our main result, Theorem~\ref{StrongLinear}, depends on $\epsilon$ and $\alpha$ and holds as long as 
 $\epsilon$ and $\alpha$  satisfy three natural inequalities. 
Later we will make a specific choice of $\epsilon$ and $\alpha$ in order to optimize the
constant in Theorem~\ref{LinearCycle0}. We need to introduce the following quantities.

\begin{definition}
Let $G$  be a $2$-connected plane graph. We define
 $$s(G) := \epsilon v(G) + \alpha(b(G)+q(G))\hbox{\quad and \quad}d(G) := {\rm def}(G)-s(G).$$
If  $T=(G,C,L)$ is a canvas, then we define $s(T) :=s(G)$ and $d(T) :=d(G)$.
\end{definition}

We need to establish a few properties of the quantities just introduced before we can state Theorem~\ref{StrongLinear}.

\begin{proposition}\label{surplussum}
Let $G$  be a $2$-connected plane graph with outer cycle $C$, and let $G'$ be a $2$-connected
subgraph of $G$ containing $C$ as a subgraph. Then
\begin{itemize}
\item $v(G)=v(G')+\sum_{f\in\F(G')} v(G[f])$,
\item $b(G)\le b(G')+\sum_{f\in\F(G')} b(G[f])$,
\item $q(G)\le q(G')+\sum_{f\in\F(G')} q(G[f])$,
\item $s(G)\le s(G')+\sum_{f\in\F(G')} s(G[f])$,
\item $d(G)\ge d(G')+\sum_{f\in\F(G')} d(G[f])$.
\end{itemize}
\end{proposition}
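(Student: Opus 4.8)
The plan is to prove each of the five bullets in turn, with the first (the equality for $v$) essentially free from earlier work and the rest deduced from it together with simple combinatorial inclusions. First I would note that the identity $v(G)=v(G')+\sum_{f\in\F(G')}v(G[f])$ is exactly the fact already used in the proof of Lemma~\ref{defsum}: each internal vertex of $G$ lies either on $G'$ (in which case it is internal in $G'$, since $C\subseteq G'$) or strictly inside exactly one face $f\in\F(G')$, where it becomes an internal vertex of $G[f]$ (it cannot lie on $C_f$, because $C_f\subseteq G'$). No vertex is counted twice, and no internal vertex of $G$ is missed.

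Next I would handle the two inequalities for $b$ and $q$. For $b(G)=|N(V(G))|$: hold on---here $B(G)$ is the neighborhood of the \emph{internal} vertices, i.e. the vertices of $C$ with a neighbor off $C$ together with the internal vertices adjacent to such, so really $B(G)=N(V(G)\setminus V(C))$ in the notation of the paper; in any case, I would argue that every vertex $w\in B(G)$ is a vertex of $G'$ or lies strictly inside some $f\in\F(G')$. If $w$ lies strictly inside $f$, then its neighbor in $V(G)\setminus V(C)$ witnessing $w\in B(G)$ also lies in the closed disk bounded by $C_f$, hence in $G[f]$, and one checks $w\in B(G[f])$ (the neighbor is internal in $G[f]$ too, as it is not on $C_f\subseteq G'$). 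If $w\in V(G')$, then $w$ either lies on some $C_f$, whence $w\in V(G[f])$ and I must check $w\in B(G[f])$ or is picked up by the $b(G')$ term; here care is needed, because the neighbor of $w$ realizing $w\in B(G)$ might itself be internal to $G$ and lie in a face $f$, and then $w$ is on $C_f$ and the neighbor is internal in $G[f]$, so $w\in B(G[f])$. Thus every element of $B(G)$ is counted by the right-hand side, giving the inequality (it is only ``$\le$'' because a single $w$ may be counted in several $G[f]$'s or in both $G'$ and some $G[f]$). The argument for $q(G)$ is identical with ``adjacent'' replaced by ``cofacial'': a vertex $v\notin V(C)$ cofacial with some $c\in V(C)$ shares a face $h$ of $G$ with $c$; that face $h$ lies inside the closure of exactly one $f\in\F(G')$ (or $v\in V(G')$), and inside $G[f]$ the vertex $c$ still lies on $C_f$ or inside it, so $v\in Q(G[f])$ or $v$ is accounted for by $q(G')$.

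The fourth bullet is then pure arithmetic: $s(G)=\epsilon v(G)+\alpha(b(G)+q(G))$, and substituting the equality for $v$ and the two inequalities for $b,q$ (all coefficients $\epsilon,\alpha>0$) gives $s(G)\le s(G')+\sum_{f}s(G[f])$. The fifth bullet combines Lemma~\ref{defsum}, which gives ${\rm def}(G)={\rm def}(G')+\sum_f{\rm def}(G[f])$ \emph{exactly}, with $d=\mathrm{def}-s$ and the just-proved inequality $s(G)\le s(G')+\sum_f s(G[f])$; subtracting reverses the inequality and yields $d(G)\ge d(G')+\sum_f d(G[f])$.

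The only real obstacle is bookkeeping in the $b$ and $q$ bullets: making sure that when a boundary/quasi-boundary vertex of $G$ is distributed among $G'$ and the $G[f]$'s, it genuinely reappears as a boundary/quasi-boundary vertex of the piece it lands in, and in particular that the witnessing neighbor or cofacial partner survives into that piece with the right status (internal vs.\ on the cycle). Once one observes that $C_f\subseteq G'$ for every $f\in\F(G')$ — so that nothing on $C_f$ can have been an internal vertex of $G[f]$ unless it was already off $C$ — these checks are routine, and the overcounting is exactly what makes the inequalities go the ``harmless'' direction. I would write this up as a single short proof, doing $v$, then $b$ and $q$ together, then deriving $s$ and $d$.
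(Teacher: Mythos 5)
Your overall route is the paper's: the first bullet by observing that every internal vertex of $G$ lies in exactly one of $G'\setminus V(C)$ and $G[f]\setminus V(C_f)$, the second and third by locating, for each boundary/quasi-boundary vertex of $G$, a piece ($G'$ or some $G[f]$) in which it is again a boundary/quasi-boundary vertex, and the last two by arithmetic together with Lemma~\ref{defsum}. The one genuine slip is how you resolved the typo in the definition of $B(G)$: you take $B(G)=N(V(G)\setminus V(C))$, which is a set of vertices \emph{of} $C$, whereas the paper's remark that $B(G)\subseteq Q(G)$ (recall $Q(G)$ is disjoint from $V(C)$) and the later uses of $B$ (e.g.\ in Claim~\ref{QuasiSame}, where neighbors of $x_1$ off $C$ are asserted to lie in $B(T)$) force the intended reading $B(G)=N(V(C))$, i.e.\ the vertices \emph{not} on $C$ that have a neighbor on $C$. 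So as written your second bullet bounds the wrong quantity. With the intended definition the witness for $w\in B(G)$ is a vertex $u\in V(C)$ adjacent to $w$, and your case analysis collapses to the paper's: if $w$ lies strictly inside $f\in\F(G')$, then $u\in V(G')$ lies in the closed disk bounded by $C_f$, hence on $C_f$, so $w\in B(G[f])$; if $w\in V(G')$, the paper simply asserts $w\in B(G')$.

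On that last case your hedge (``is picked up by the $b(G')$ term'') is no more of an argument than the paper's one-liner: if the only $G$-edges from $w\in V(G')\setminus V(C)$ to $C$ are absent from $G'$, such an edge sits inside a face $f$ of $G'$ with both ends on $C_f$, and then it certifies membership in neither $B(G')$ nor $B(G[f])$; this corner case is glossed over in the paper's proof as well, so on that point your treatment is on par with the source. Note that the quasi-boundary bullet does not suffer from this, since cofaciality with a vertex of $C$ persists when passing from $G$ to the subgraph $G'$ (each face of $G$ is contained in a face of $G'$), which your argument for $q$ correctly uses, just as the paper does. Your derivation of the $s$ and $d$ bullets from the first three together with Lemma~\ref{defsum} is exactly the paper's.
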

\begin{proof}
For $f\in \F(G')$, let $C_f$ denote the cycle bounding $f$.
The first assertion follows as every vertex of $V(G)\setminus V(C)$ is in exactly one of 
$G'\setminus V(C)$ and $G[f]\setminus V(C_f)$ for $f\in \F(G')$, and every vertex in one of those sets is in $V(G)\setminus V(C)$.

The second assertion follows from the claim that $B(G)\subseteq B(G')\cup \bigcup_{f\in\F(G')} B(G[f])$. 
To see this claim, suppose that $v\in B(G)$. Now $v\in B(G)$ if and only if $v$ has a neighbor $u$ in $V(C)$. If $v\in V(G')$, then $v\in B(G')$. So we may assume that $v$ is a vertex of $G[f]\setminus V(C_f)$ for some $f\in \F(G')$. So it must be that $u\in V(C_f)$ and hence $v\in B(G[f])$.

The third assertion follows from the claim that $Q(T)\subseteq Q(G')\cup \bigcup_{f\in\F(G')} Q(G[f])$. That claim follows with the same argument as above, except that $u\in V(C)$ is cofacial with $v$ instead of a neighbor of $v$.

The fourth statement follows from the first three. The fifth statement follows from the fourth and Lemma~\ref{defsum}. 
\end{proof} 

\begin{corollary}\label{ChordOr2Path}
Let $G$  be a $2$-connected plane graph with outer cycle $C$.
If $e$ is a chord of $C$ and $C_1,C_2$ are the cycles of $C+e$ other than $C$, then 
$$d(G)\ge d(G\langle C_1 \rangle )+d(G\langle C_2 \rangle)+1.$$
If $v$ is a vertex with two neighbors $u_1,u_2\in V(C)$ and $C_1,C_2$ cycles such that $C_1\cap C_2=u_1vu_2$ 
and $C_1\cup C_2=C\cup u_1vu_2$, then
$$d(G)\ge d(G\langle C_1 \rangle)+d(G\langle C_2 \rangle)-1-(2\alpha+\epsilon).$$
\end{corollary}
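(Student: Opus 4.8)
The plan is to obtain both inequalities by applying Proposition~\ref{surplussum} with a judiciously chosen $2$-connected subgraph $G'$ of $G$ and then tracking how each of the quantities ${\rm def}$, $v$, $b$, $q$ changes.

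For the first inequality, I would take $G' = C + e$. Since $e$ is a chord of $C$, the graph $C+e$ is $2$-connected and contains $C$, so Proposition~\ref{surplussum} applies and gives $d(G) \ge d(G') + d(G\langle C_1\rangle) + d(G\langle C_2\rangle)$, using that the two internal faces of $G'$ are exactly those bounded by $C_1$ and $C_2$. It then remains to check that $d(C+e) \ge 1$. Here $v(C+e) = 0$ and $b(C+e) = q(C+e) = 0$ (there are no vertices outside $C$), so $s(C+e) = 0$ and $d(C+e) = {\rm def}(C+e) = |E(C+e)\setminus E(C)| - 0 = 1$. This yields the claimed bound.

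For the second inequality, I would take $G'$ to be the subgraph $C \cup u_1 v u_2$, i.e.\ $C$ together with the vertex $v$ and the two edges $vu_1$, $vu_2$. This $G'$ is $2$-connected (it is a cycle with a path attached at both ends to $C$, creating a theta-like graph) and contains $C$; its two internal faces are bounded by $C_1$ and $C_2$. Applying Proposition~\ref{surplussum} gives $d(G) \ge d(G') + d(G\langle C_1\rangle) + d(G\langle C_2\rangle)$, so it suffices to show $d(G') \ge -1 - (2\alpha + \epsilon)$. Now $v(G') = 1$ (only $v$ is internal), and ${\rm def}(G') = |E(G')\setminus E(C)| - 3v(G') = 2 - 3 = -1$. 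For the boundary quantities, $B(G') = \{v\}$ since $v$ has neighbors $u_1, u_2$ on $C$, so $b(G') = 1$; and $Q(G') \subseteq \{v\}$, so $q(G') \le 1$. Hence $s(G') = \epsilon \cdot 1 + \alpha(b(G') + q(G')) \le \epsilon + 2\alpha$, and therefore $d(G') = {\rm def}(G') - s(G') \ge -1 - (\epsilon + 2\alpha)$, as required.

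I do not anticipate a serious obstacle here; the corollary is essentially a bookkeeping consequence of Proposition~\ref{surplussum}. The only point requiring a little care is verifying that the chosen $G'$ is genuinely $2$-connected in the second case (so that Proposition~\ref{surplussum} is applicable) and that its internal faces are precisely the regions bounded by $C_1$ and $C_2$; this follows from the hypotheses $C_1 \cap C_2 = u_1vu_2$ and $C_1 \cup C_2 = C \cup u_1vu_2$, which say exactly that adding the path $u_1vu_2$ inside $C$ splits the disk into the two faces bounded by $C_1$ and $C_2$. One should also note $v \notin V(C)$ (otherwise $u_1vu_2$ would not be a path of internal edges), which is implicit in the setup.
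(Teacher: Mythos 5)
Your proposal is correct and takes essentially the same route as the paper, which proves both inequalities by applying Proposition~\ref{surplussum} to $G':=C_1\cup C_2$ (equal to $C+e$ in the first case and to $C\cup u_1vu_2$ in the second) and using the values $d(G')=1$ and $d(G')=-1-(2\alpha+\epsilon)$, exactly as you compute via Proposition~\ref{d0}-style bookkeeping.
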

\begin{proof}
Both formulas follow from Proposition~\ref{surplussum} applied to $G':=G[C_1\cup C_2]$.
\end{proof}

For future convenience we state the following facts, which follow directly from the definitions.

\begin{proposition} \label{d0}
Let $T=(G,C,L)$ be a canvas.
\begin{enumerate}
\item[{\rm(i)}] If $v(T)=0$, then $d(T)= |E(G)\setminus E(C)|$.
\item[{\rm(ii)}] If $v(T)=1$, then $d(T)= |E(G)\setminus E(C)|-3 - (2\alpha+\epsilon)$.
\end{enumerate}
\end{proposition}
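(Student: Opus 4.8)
The plan is simply to unwind the definitions of ${\rm def}$, $s$, and $d$ in each of the two cases; no genuine argument is required beyond bookkeeping.

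For (i), suppose $v(T)=0$. Then $G$ has no internal vertices, so $V(G)=V(C)$. Consequently $N(V(C))$ contains no vertex outside $V(C)$, i.e.\ $B(G)=\emptyset$, and since $B(G)\subseteq Q(G)\subseteq V(G)\setminus V(C)=\emptyset$ we also get $Q(G)=\emptyset$. Hence $b(G)=q(G)=v(G)=0$, so $s(T)=\epsilon v(G)+\alpha(b(G)+q(G))=0$, while ${\rm def}(T)=|E(G)\setminus E(C)|-3v(G)=|E(G)\setminus E(C)|$. Therefore $d(T)={\rm def}(T)-s(T)=|E(G)\setminus E(C)|$, which is (i).

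For (ii), suppose $v(T)=1$ and let $v$ be the unique internal vertex of $G$. Since $v$ is the only vertex of $G$ not on $C$, every neighbor of $v$ lies on $C$; as $G$ is $2$-connected, $v$ has degree at least two, so in particular $v\in N(V(C))=B(G)$. Moreover $B(G)\subseteq Q(G)\subseteq V(G)\setminus V(C)=\{v\}$, so $B(G)=Q(G)=\{v\}$ and thus $b(G)=q(G)=v(G)=1$. Hence $s(T)=\epsilon\cdot1+\alpha(1+1)=2\alpha+\epsilon$ and ${\rm def}(T)=|E(G)\setminus E(C)|-3\cdot1=|E(G)\setminus E(C)|-3$, giving $d(T)={\rm def}(T)-s(T)=|E(G)\setminus E(C)|-3-(2\alpha+\epsilon)$, which is (ii).

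I expect no real obstacle here; the only point worth an explicit sentence is the identification of $B(G)$ and $Q(G)$ in case (ii), which is immediate because every vertex of $Q(G)$ (and hence of $B(G)$) lies outside $V(C)$, and $v$ is the unique such vertex, while $v$ itself qualifies since all of its neighbors — of which there is at least one — lie on $C$.
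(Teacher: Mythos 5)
Your proof is correct and is exactly the routine unwinding of the definitions of ${\rm def}$, $s$, and $d$ that the paper has in mind (the paper states Proposition~\ref{d0} "follows directly from the definitions" and omits the details). Your explicit observation that in case (ii) the unique internal vertex lies in both $B(G)$ and $Q(G)$, forcing $b(G)=q(G)=1$, is the only point needing a word, and you handle it properly via $2$-connectedness.
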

%
%


\medskip
We are now ready to state our generalization of Theorem~\ref{LinearCycle0}. 

\begin{theorem}\label{StrongLinear}
Let $\epsilon, \alpha, \gamma > 0$ satisfy the following:
\begin{enumerate}
\item[{\rm(I1)}] $3\epsilon \le 2\alpha$,
\item[{\rm(I2)}]  $6\alpha+3\epsilon\le \gamma$,
\item[{\rm(I3)}]  $2\alpha+3\epsilon+\gamma \le 1$.
\end{enumerate}
If $T=(G,C,L)$ is a critical canvas and $v(T)\ge 2$, then $d(T)\ge 3-\gamma$.
\end{theorem}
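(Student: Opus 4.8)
The natural approach is induction on $|V(G)|$, paralleling the proof of Theorem~\ref{CycleDef} but carrying the stronger quantity $d(T)$ and the threshold $3-\gamma$. Apply the Cycle Chord or Tripod Theorem (Theorem~\ref{CycleChordTripod}) to the critical canvas $T$. There are two outcomes to handle, and within each, subcases according to how many internal vertices lie ``inside'' the relevant subcycle.

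\textbf{Case 1: $C$ has a chord $e$.} Let $C_1,C_2$ be the two cycles of $C+e$ other than $C$, and set $T_i=T\langle C_i\rangle=(G_i,C_i,L)$. By Corollary~\ref{ChordOr2Path}, $d(T)\ge d(T_1)+d(T_2)+1$. Each $T_i$ is either critical (by Corollary~\ref{SubCycle}) or satisfies $G_i=C_i$. If $G_i=C_i$ then $v(T_i)=0$ and $d(T_i)=0$ by Proposition~\ref{d0}(i). If $T_i$ is critical then by induction $d(T_i)\ge 3-\gamma$ provided $v(T_i)\ge 2$; if $v(T_i)=1$ we need a separate bound. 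For a critical canvas with one internal vertex, that vertex has degree at least five by Proposition~\ref{Facts}(2), so $|E(G_i)\setminus E(C_i)|\ge 5$, and Proposition~\ref{d0}(ii) gives $d(T_i)\ge 2-(2\alpha+\epsilon)$. So in all critical subcases $d(T_i)\ge \min\{3-\gamma,\, 2-2\alpha-\epsilon\} \ge 0$ (using (I2),(I3) to see both are $\ge 0$; in fact $2-2\alpha-\epsilon\ge 1$ follows from (I3)). Combining, the worst case is when one $T_i$ is trivial and the other has exactly one internal vertex (or both trivial), giving $d(T)\ge 0 + (2-2\alpha-\epsilon) + 1 = 3-(2\alpha+\epsilon)\ge 3-\gamma$ by (I2). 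If both $T_i$ are critical with $\ge 2$ internal vertices, $d(T)\ge 2(3-\gamma)+1 \ge 3-\gamma$ trivially. One must also treat the case $v(T_1)=v(T_2)=1$: then $d(T)\ge 2(2-2\alpha-\epsilon)+1 = 5-4\alpha-2\epsilon \ge 3-\gamma$, again by (I2).

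\textbf{Case 2: the Tripod alternative.} There is an internal vertex $v$ with neighbors $u_1,\dots,u_k$ on $C$, $k\ge 3$, such that at most one face of $G[\{v\}\cup V(C)]$ contains a vertex or edge of $G$. Let $G'=G[\{v\}\cup C]$. If no face of $G'$ contains anything of $G$, then $V(G)=V(C)\cup\{v\}$, $v$ has degree $\ge 5$, so $d(T)\ge 5 - 3 - (2\alpha+\epsilon) = 2-2\alpha-\epsilon$; and since here $v(T)=1$ this contradicts $v(T)\ge 2$, so this subcase is vacuous. Otherwise exactly one face, bounded by a cycle $C'$, contains the rest of $G$; the cycle $C'$ passes through $v$ and two consecutive neighbors $u_i,u_{i+1}$, and $C'\cup(\text{the rest of }C\text{-path})$ together with the other ``ears'' $vu_j$ account for the discharging. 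Here I expect the cleanest bookkeeping is: apply Proposition~\ref{surplussum} with $G':=G'$ (or with the $2$-connected graph $C\cup\{vu_i,vu_{i+1}\}$), isolating $T\langle C'\rangle$ as the one face carrying internal vertices, while the other $k-1$ faces of $G'$ are triangle-bounded or near-triangle regions contributing nonnegatively to $d$. Since $v$ has degree $\ge 5$, there are at least two ``spare'' edges at $v$ beyond $C'$ plus the structure that lets us absorb the $-1-(2\alpha+\epsilon)$ loss from each $2$-path split (Corollary~\ref{ChordOr2Path}, second inequality), and crucially $k\ge 3$ gives enough slack. By induction $d(T\langle C'\rangle)\ge 3-\gamma$ if its interior has $\ge 2$ internal vertices, and the $v=1$ case again uses Proposition~\ref{d0}(ii). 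Summing, one obtains $d(T)\ge d(T\langle C'\rangle) + (\text{nonneg. contributions from other faces}) - (\text{losses})$, and (I1)–(I3) are calibrated precisely so the total stays $\ge 3-\gamma$.

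\textbf{Main obstacle.} The delicate point is Case 2 with exactly one nontrivial face: one must carefully account for all $k\ge 3$ faces of $G[\{v\}\cup V(C)]$, show that the $k-1$ trivial faces each contribute $d\ge$ (something controlled by $\alpha,\epsilon$), and verify that the edges of $G$ incident to $v$ (degree $\ge 5$, so at least $5$ edges, of which the path $C'$ uses at most two plus the ears use the rest) supply enough surplus in the deficiency to cover the per-split penalty $1+2\alpha+\epsilon$ from Corollary~\ref{ChordOr2Path}. Getting the inequality to close with the stated constant — rather than something weaker — is exactly where (I1) ($3\epsilon\le 2\alpha$), (I2) ($6\alpha+3\epsilon\le\gamma$) and (I3) ($2\alpha+3\epsilon+\gamma\le 1$) each get used; tracking which inequality is tight in which subcase is the bulk of the work. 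A minor secondary obstacle is ensuring the boundary/quasi-boundary terms $b,q$ behave subadditively across the decomposition, but this is already packaged in Proposition~\ref{surplussum}, so it costs nothing extra here.
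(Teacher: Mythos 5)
There is a genuine gap, and it sits exactly where you flagged the ``main obstacle'': Case~2 does not close, and no calibration of (I1)--(I3) can make it close the way you propose. Decompose at a tripod $v$ with $G'=G[\{v\}\cup V(C)]$ as you suggest: the $k-1$ trivial faces contribute exactly $0$ (not a positive amount), ${\rm def}(G')=0$, and the ``spare'' edges at $v$ beyond $C'$ all lie inside the disk bounded by $C'$, so they are already counted in ${\rm def}(G\langle C'\rangle)$ and give no additional surplus. What the decomposition actually yields is ${\rm def}(T)={\rm def}(T\langle C'\rangle)$, $v(T)=v(T\langle C'\rangle)+1$, $b(T)\le b(T\langle C'\rangle)+1$, $q(T)\le q(T\langle C'\rangle)+1$, hence only $d(T)\ge d(T\langle C'\rangle)-(2\alpha+\epsilon)$ (this is precisely Claim~\ref{Relax1} of the paper with $k=1$). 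Since $2\alpha+\epsilon>0$, induction gives $d(T)\ge 3-\gamma-(2\alpha+\epsilon)$, which is strictly weaker than the claimed bound; the loss is per tripod and is not absorbed by anything in your accounting. Your Case~1 is essentially fine (and mirrors the chord case of Theorem~\ref{CycleDef}), but the tripod case is where the whole difficulty of the theorem lives.

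The deeper reason your plan cannot be repaired within its own terms is that it uses only structural consequences of criticality (chord/tripod alternative, internal degrees $\ge 5$, no vertices inside short cycles, subadditivity of $d$). These facts are also satisfied by a large triangulated disk with all internal degrees $5$ or $6$, for which $d$ is arbitrarily negative; so any proof must invoke list-coloring reducibility, not just counting. This is what the paper does: it takes a minimum counterexample and, instead of inducting through a tripod, shows via Claim~\ref{QuasiSame} that the $2\alpha+\epsilon$ loss is recouped through the boundary/quasi-boundary terms when the tripod has degree at least $7$ (this is where (I1) enters), and when the tripod has degree $5$ or $6$ it runs a genuine reducible-configuration argument: it fixes a non-extendable coloring $\phi$, proves all lists have size exactly $5$ (Claim~\ref{LS}), analyzes dividing vertices (Claims~\ref{DividingTrue}, \ref{DividingStrong}, the latter itself a coloring argument with auxiliary vertices), passes to second-level tripods of $T\oplus X_1$ (Claim~\ref{X2}), and derives a contradiction by exhibiting proper critical subcanvases of suitable relaxations (Claims~\ref{DegSpecial} through \ref{degX2} and the final argument). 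None of this machinery is present in, or replaceable by, the decomposition bookkeeping you describe, so the proposal as written does not prove the theorem.
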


\section{Proof of Theorem~\ref{StrongLinear}}
\label{sec:pfstronglin}

This section is devoted to a proof of Theorem~\ref{StrongLinear}. We proceed in a series of claims. Throughout this section let $T=(G,C,L)$ be a counterexample to Theorem~\ref{StrongLinear} such that 
\begin{itemize}
\item[(M1)] $|E(G)|$ is minimum,
\end{itemize}
and, subject to that,
\begin{itemize}
\item[(M2)] $\sum_{v\in V(G)}|L(v)|$ is minimum.
\end{itemize}
Let us recall the useful facts of Proposition~\ref{Facts}, especially that there is no triangle $C'$ of $G$ with 
$G\langle C'\rangle \ne C'$ and that $\deg(v)\ge 5$ for all $v\in V(G)\setminus V(C)$.

\begin{claim}\label{Base}
$v(T)\ge 5.$
\end{claim}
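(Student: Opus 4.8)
The plan is to handle the small cases $v(T)\in\{2,3,4\}$ directly, using the structure theorem for critical canvases (Theorem~\ref{CycleChordTripod}) together with the deficiency inequalities we have set up, so that the induction in the main proof of Theorem~\ref{StrongLinear} has a clean base. First I would observe that if $v(T)\le 4$ then $C$ cannot have a chord: a chord $e$ splits $C$ into $C_1,C_2$, and by Corollary~\ref{ChordOr2Path} we get $d(T)\ge d(T\langle C_1\rangle)+d(T\langle C_2\rangle)+1$; each $T\langle C_i\rangle$ is either equal to $C_i$ or (by Corollary~\ref{SubCycle}) a critical canvas with strictly fewer edges, so by minimality (M1) and/or Theorem~\ref{StrongLinear} applied inductively we can lower-bound each term and conclude $d(T)\ge 3-\gamma$ unless both sides are trivial, in which case $v(T)$ is actually $0$, a contradiction. (If only one side is nontrivial and has $v\le 1$ we use Proposition~\ref{d0}.) So by Theorem~\ref{CycleChordTripod} there is an internal vertex $v$ with at least three neighbors on $C$ and at most one face of $G[\{v\}\cup V(C)]$ containing anything of $G$.

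Next I would split on whether that one face is trivial. If no face of $G[\{v\}\cup V(C)]$ contains a vertex or edge of $G$, then $V(G)=V(C)\cup\{v\}$, so $v(T)=1<2$, excluded. Hence exactly one face, bounded by a cycle $C'$, is nontrivial, and $T\langle C'\rangle$ is a critical canvas by Corollary~\ref{SubCycle}. Writing $G'=G[\{v\}\cup V(C)]$ and applying Proposition~\ref{surplussum} to $G'$, the contribution of the ``tripod'' part of $G'$ to the deficiency is at least the number of edges from $v$ to $C$ minus $3$ minus $(2\alpha+\epsilon)$ for the single internal vertex $v$; since $v$ has at least three neighbors on $C$ this is $\ge -(2\alpha+\epsilon)$, and more edges from $v$ help. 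Then $d(T)\ge d(T\langle C'\rangle) - (2\alpha+\epsilon) + (\text{deg}_C(v)-3)$ roughly, and $v(T\langle C'\rangle)=v(T)-1\in\{1,2,3\}$. If $v(T\langle C'\rangle)\ge 2$ we could try to invoke the theorem inductively, but of course here we are establishing the base case, so I would instead bound $d(T\langle C'\rangle)$ from below by hand using $v(T\langle C'\rangle)\le 3$: apply Lemma~\ref{defbound} inside $C'$ (every internal vertex has degree $\ge 5$ by Proposition~\ref{Facts}(2)), and use Proposition~\ref{Facts}(1) to rule out short cycles among the at most three internal vertices, forcing enough edges to make ${\rm def}$ large; the $b,q$ terms cost at most $2\alpha$ per internal vertex. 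Combining these with inequalities (I1)--(I3) — in particular (I1) $3\epsilon\le 2\alpha$ and (I3) $2\alpha+3\epsilon+\gamma\le 1$ — should yield $d(T)\ge 3-\gamma$.

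The main obstacle I anticipate is the bookkeeping in the case $v(T)=4$ with the tripod reduction leaving $v(T\langle C'\rangle)=3$: there I cannot yet use the theorem recursively, so I must get a sufficiently strong lower bound on $d$ of a critical canvas with exactly three internal vertices purely from first principles. This means carefully enumerating how three internal vertices of degree $\ge 5$, with no separating triangle or shorter cycle (Proposition~\ref{Facts}(1)), can be attached to $C'$, counting edges among them and to the boundary, and checking that in every configuration ${\rm def}-s$ is at least, say, $3-\gamma-$ (the tripod deficit). I would organize this as one or two subclaims: first a clean lower bound ``if $v(T)\in\{2,3\}$ then $d(T)\ge 3-\gamma+(2\alpha+\epsilon)$'' or similar with enough slack to absorb the tripod cost, proved by the same chord/tripod dichotomy bottoming out at $v(T)\le 1$ via Proposition~\ref{d0}, and then feed that into the $v(T)=4$ argument. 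The inequalities (I1)--(I3) are exactly calibrated to make these estimates close, so the computation has little room to spare and must be done carefully, but it is routine once the case analysis is laid out.
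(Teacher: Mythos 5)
There is a genuine gap, and it sits exactly where you flagged your ``main obstacle'': the case $v(T)=4$. Your plan is to close it by pure counting (chord/tripod decomposition, Lemma~\ref{defbound}, Proposition~\ref{Facts}(1)) together with (I1)--(I3), but counting provably cannot suffice. Consider the configuration in which $G\setminus V(C)$ is $K_4$ minus an edge and every internal vertex has degree exactly five; this violates neither Proposition~\ref{Facts}(1) (the triangle among internal vertices is facial, not separating -- Facts(1) does \emph{not} ``rule out short cycles among the internal vertices'' as you claim, it only forbids vertices inside them) nor the degree condition. For it, $|E(G\setminus V(C))|=5$, so Lemma~\ref{defbound} gives only ${\rm def}(G)=3$, while all four internal vertices lie in $B(T)\cap Q(T)$, so $s(G)=4(2\alpha+\epsilon)$ and $d(T)=3-4(2\alpha+\epsilon)$. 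The inequality $4(2\alpha+\epsilon)\le\gamma$ is not implied by (I1)--(I3) and fails for the constants the paper ultimately uses ($\alpha=1/12,\epsilon=1/18,\gamma=2/3$ give $8\alpha+4\epsilon=8/9>2/3$). For the same reason your proposed intermediate strengthening ``$d\ge 3-\gamma+(2\alpha+\epsilon)$ when $v\in\{2,3\}$'' is out of reach (and is likely false: a triangle of internal vertices, each with three neighbours on $C$ and residual lists of size two, can be critical and has $d=3-3(2\alpha+\epsilon)$). The missing idea is a coloring argument: in the equality case one must observe that each internal vertex retains at least as many colors as its degree in $K_4-e$, so every $L$-coloring of $C$ extends, contradicting criticality; this is what pushes ${\rm def}(G)$ up to $4$ and makes $4-4(2\alpha+\epsilon)\ge 3-\gamma$ follow from (I2) and (I3). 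This is precisely how the paper argues, and its route is far more direct: no chord/tripod decomposition at all, just $m=|E(G\setminus V(C))|\le 1,3,5$ for $v(G)=2,3,4$, Lemma~\ref{defbound}, and the extremal-case coloring argument.

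Two smaller points. Your chord case and the cases $v(T)=3$ do go through along your lines, but for $v(T)=2$ the bound $d(T)\ge d(T\langle C'\rangle)-(2\alpha+\epsilon)\ge 2-2(2\alpha+\epsilon)$ is not enough ($\gamma\le 1$ by (I3)); you must additionally use that the tripod has at most one internal neighbour, hence at least four neighbours on $C$, to gain the extra $+1$ and land at $3-2(2\alpha+\epsilon)\ge 3-\gamma$ via (I2). So ``more edges from $v$ help'' is not optional there. As written, the proposal does not prove the claim; to repair it you need the extension/coloring step for the extremal $v(T)=4$ configuration (or simply adopt the paper's direct argument).
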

\begin{proof}
Suppose for a contradiction that $v(T)\le4$, and let $m:=|E( G\setminus V(C))|$.
Then $m\le 1$ if $v(G)=2$, $m\le 3$ if $v(G)=3$,  and $m\le 5$ if $v(G)=4$, where the last inequality
follows from Proposition~\ref{Facts}(1).
By Proposition~\ref{Facts}(2) and Lemma~\ref{defbound} we have def$(G)\ge3$. 
Furthermore,  if  $v(T)=4$, then  equality holds if and only if  $m=5$ and every internal
vertex of $G$ has degree exactly five. 
However, that cannot happen, because in that case $G\setminus V(C)$ is obtained
from the complete graph on four vertices by deleting an edge, and hence every $L$-coloring
of $C$ extends to an $L$-coloring of $G$, contrary to the criticality of $T$.
Thus def$(G)\ge4$ when $v(T)=4$.
Clearly $s(G)\le v(G)(2\alpha+\epsilon)$, and hence
$d(G)\ge 3-3(2\alpha+\epsilon)\ge3-\gamma$ by inequality (I2)  when $v(G)\in\{2,3\}$,
and $d(G)\ge 4-4(2\alpha+\epsilon)\ge3-\gamma$ by inequalities  (I2) and (I3) when $v(G)=4$,
in either case contrary to the fact that $T$ is a counterexample to Theorem~\ref{StrongLinear}.
\end{proof}

\subsection{Proper Critical Subgraphs}

Here is a remarkably useful lemma.

\begin{claim}\label{ProperCrit}
Suppose $T_0=(G_0,C_0,L_0)$ is a critical canvas with $|E(G_0)|\le |E(G)|$ and $v(T_0)\ge 2$,
and let $G'$ be a proper subgraph of $G_0$ such that for some list assignment $L'$ the triple
$(G',C_0,L')$  is a critical canvas. Then

\begin{enumerate}
\item[{\rm(1)}] $d(T_0)\ge 4-\gamma$, and
\item[{\rm(2)}] $d(T_0)\ge 4-2(2\alpha+\epsilon)$ if $|E(G_0)\setminus E(G')|,|E(G')\setminus E(C_0)|\ge 2$, and
\item[{\rm(3)}] $d(T_0)\ge 5-2\alpha-\epsilon - \gamma$ if $|E(G_0)\setminus E(G')|,|E(G')\setminus E(C_0)|\ge 2$ and $v(T_0)\ge 3$.
\end{enumerate}
\end{claim}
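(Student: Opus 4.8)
The plan is to apply Lemma~\ref{SComponent} to the critical canvas $T_0$ using a decomposition $G_0 = A \cup B$ where $B$ consists of $G'$ together with the vertices and edges of $G_0$ ``on the other side'' of $G'$, and then to apply Lemma~\ref{defsum} (or Proposition~\ref{surplussum}) to split the deficiency. More precisely, since $(G',C_0,L')$ is a critical canvas it is in particular $2$-connected with outer cycle $C_0$; I would consider the faces $\F(G')$ of $G'$ and write, via Proposition~\ref{surplussum} applied to the pair $G' \subseteq G_0$,
\[
d(T_0) \;\ge\; d(G') + \sum_{f \in \F(G')} d(G_0[f]).
\]
Here $d(G')$ means the surplus-adjusted deficiency computed with the \emph{same} graph $G'$ but using the list assignment $L$ inherited from $T_0$; the point is that $s$ and $\mathrm{def}$ depend only on the plane graph, not on the lists, so $d(G')$ is well-defined regardless of whether $L$ or $L'$ is used.

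The key step is to lower-bound $d(G')$ and each $d(G_0[f])$. For the faces $f \in \F(G')$, each $G_0[f]$ is either equal to its bounding cycle $C_f$ (contributing $d(G_0[f]) \ge 0$, indeed $\ge 0$ since it has no internal vertices and $d = |E \setminus E(C_f)| \ge 0$), or, by Corollary~\ref{SubCycle}, it is a critical canvas to which we may apply the minimality of $T$ (as $|E(G_0[f])| < |E(G_0)| \le |E(G)|$ is strict, the face being a proper part): if it has $\ge 2$ internal vertices then $d(G_0[f]) \ge 3 - \gamma > 0$ by Theorem~\ref{StrongLinear}, and if it has $\le 1$ internal vertex one checks directly (Proposition~\ref{d0} together with Proposition~\ref{Facts}) that $d(G_0[f]) \ge 0$ — in fact for a critical canvas with one internal vertex $v$, $v$ has degree $\ge 5$ so $|E \setminus E(C_f)| \ge 5$ and $d \ge 5 - 3 - (2\alpha+\epsilon) = 2 - (2\alpha+\epsilon) \ge 0$ by (I2). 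So in all cases $\sum_{f} d(G_0[f]) \ge 0$, and it remains to bound $d(G')$. Since $(G',C_0,L')$ is critical with $v(G') = v(G_0) - \sum_f v(G_0[f])$; but actually the cleanest route is: $G'$ is a proper subgraph of $G_0$ with $|E(G')| \le |E(G_0)| - 1 < |E(G)|$, so $(G',C_0,L')$ is a critical canvas strictly smaller than the minimal counterexample $T$, hence Theorem~\ref{StrongLinear} applies to it when $v(G') \ge 2$, giving $d(G') \ge 3 - \gamma$. This already yields something, but to reach $4 - \gamma$ in (1) I need an \emph{extra} $+1$, which must come either from $G'$ having a chord of $C_0$ (use Corollary~\ref{ChordOr2Path}) or from the faces $f$ of $G'$: if some $G_0[f] \ne C_f$ there is an internal vertex of $G_0$ not in $G'$, and since $G_0$ is critical this vertex forces a nonzero contribution; the bookkeeping is that $d(T_0) \ge d(G') + (\text{something} \ge 1)$ because $G' \subsetneq G_0$ and $G_0$ is critical, so no face of $G'$ can be ``trivial everywhere'' — there is genuine deficiency-or-surplus sitting in $G_0 \setminus G'$.

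For parts (2) and (3), the hypotheses $|E(G_0) \setminus E(G')| \ge 2$ and $|E(G') \setminus E(C_0)| \ge 2$ let me avoid the lossy inequality (I3) (which was needed only to absorb the $-\gamma$ when a face is large) and instead account more carefully: having $\ge 2$ edges inside $G'$ beyond $C_0$ means $G'$ itself is not just a near-empty canvas, and having $\ge 2$ edges of $G_0$ outside $G'$ means the ``other side'' contributes a deficiency that I can bound below by $2 - (2\alpha+\epsilon)$ per unit (via Proposition~\ref{d0}(ii) / Lemma~\ref{defbound}) rather than crudely by $0$. Combining, for (2) I expect $d(T_0) \ge d(G') + [2 - (2\alpha+\epsilon)]$ with $d(G') \ge 2 - (2\alpha+\epsilon)$, summing to $4 - 2(2\alpha+\epsilon)$; and for (3), the strengthening $v(T_0) \ge 3$ lets me push $d(G')$ up by using that $G'$ now has at least one internal vertex to spare, yielding the asymmetric bound $5 - 2\alpha - \epsilon - \gamma$.

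\textbf{Main obstacle.} The hard part will be the precise accounting of where the ``extra'' contributions come from — distinguishing the case that $G'$ already carries enough deficiency from the case that the deficiency is hidden in the faces $G_0[f]$ of $G'$, and handling the small cases $v(G_0[f]) \le 1$ and $v(G') \le 1$ where Theorem~\ref{StrongLinear} does not apply and one must fall back on Theorem~\ref{CycleDef}, Proposition~\ref{d0}, and the criticality of $G_0$ (which forbids trivial configurations). I also expect some care is needed to ensure the three cases of the claim are obtained with the \emph{stated} constants rather than weaker ones, which is exactly what forces the tightrope-walking among inequalities (I1)–(I3) and motivates why only those three hypotheses on $\epsilon,\alpha,\gamma$ are imposed.
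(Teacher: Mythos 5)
Your overall skeleton is the same as the paper's: decompose via Proposition~\ref{surplussum} over the faces of $G'$, use the minimality of $T$ for pieces with at least two internal vertices, and use Proposition~\ref{d0} plus criticality for the small pieces. But as written the proposal has genuine gaps precisely at the steps that produce the stated constants. First, your face accounting only reaches $\sum_{f}d(T_0[f])\ge 0$, and you then assert an unproven ``extra $+1$'' for (1), even suggesting it might come from a chord of $C_0$ via Corollary~\ref{ChordOr2Path}, which plays no role here. The actual mechanism is sharper and you already have all the ingredients for it: since $G'$ is a \emph{proper} subgraph of $G_0$, some face $f$ of $G'$ includes a vertex or edge of $G_0$, and every such face satisfies $d(T_0[f])\ge 1$ -- if $v(T_0[f])=0$ then $d=|E\setminus E(C_f)|\ge1$ by Proposition~\ref{d0}(i); if $v(T_0[f])=1$ then criticality of $T_0[f]$ (Corollary~\ref{SubCycle}, Proposition~\ref{Facts}(2)) gives $d\ge 2-(2\alpha+\epsilon)\ge1$ since $2\alpha+\epsilon\le\gamma/3\le1/3$; if $v(T_0[f])\ge2$ then minimality gives $d\ge 3-\gamma\ge2$. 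Moreover, when $|E(G_0)\setminus E(G')|\ge2$ one needs the refinement that either two faces have content or a single face contributes at least $2-(2\alpha+\epsilon)$; you gesture at ``$2-(2\alpha+\epsilon)$ per unit'' but never establish it.

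Second, the cases $v(G')\in\{0,1\}$, which you explicitly defer as the ``main obstacle,'' are not a technicality to be postponed: they are where (2) and (3) are actually proved. For $v(G')=1$ you must use that $(G',C_0,L')$ is critical so the lone internal vertex has degree at least $5$, giving $d(G')\ge 2-(2\alpha+\epsilon)$; for $v(G')=0$ you must use $G'\ne C_0$ so $d(G')\ge|E(G')\setminus E(C_0)|\ge1$ (or $\ge2$ under the hypothesis of (2),(3)); and in each case you must distribute the $\ge2$ (resp.\ $\ge3$) internal vertices of $T_0$ among the faces of $G'$. In particular your explanation of (3) -- that $v(T_0)\ge3$ ``pushes $d(G')$ up'' -- is not the right mechanism: when $v(G')=1$ and no face has two internal vertices, the point is that there are at least \emph{two} faces each contributing $2-(2\alpha+\epsilon)$, and then inequality (I2) is what shows $6-3(2\alpha+\epsilon)\ge 5-(2\alpha+\epsilon)-\gamma$. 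So the plan is on the right track, but the quantitative bookkeeping that constitutes the proof is missing.
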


\begin{proof}
Note that the inequality in (3) implies the inequality in (2) implies the inequality in (1) by inequalities (I2) and (I3). 
Given Proposition~\ref{d0} and the fact that $T$ is a minimum counterexample, it follows that $d(T_0[f])\ge 0$ for all $f\in \F(G')$ and $d(T_0[f])\ge 1$ if $f$ includes a vertex or edge of $G_0$. Moreover as $G'$ is a proper subgraph
of $G_0$, there exists at least one $f\in \F(G')$ such that $f$ includes a vertex or edge of $G_0$. Furthermore, if $|E(G_0)\setminus E(G')|\ge 2$, either there exist two such $f$'s or $d(T_0[f])\ge 2-(2\alpha+\epsilon)$ for some $f\in \F(G')$ by Proposition~\ref{d0}. 

Now $d(T_0)\ge d(G')+\sum_{f\in\F(G')}d(T_0[f])$ by Proposition~\ref{surplussum}. As noted above though, $\sum_{f\in\F(G')}d(T_0[f])\ge 1$ and is at least $2-(2\alpha+\epsilon)$ if $|E(G_0)\setminus E(G')|\ge 2$. Furthermore if $v(T_0[f])\ge 2$, then $d(T_0[f])\ge 3-\gamma$.

Assume first that $v(G')>1$. Then $d(G')\ge 3-\gamma$ as $T$ is a minimum counterexample. Hence $d(T_0)\ge 4-\gamma$ if $|E(G_0)\setminus E(G')|=1$ and (1) holds as desired. 
Otherwise $d(T_0)\ge 5-(2\alpha+\epsilon) - \gamma$ and (2) and (3) hold as desired.

So we may assume that $v(G')\le 1$. Suppose $v(G')=1$. 
Thus $d(G')\ge 2-(2\alpha+\epsilon)$ by Proposition~\ref{d0} and criticality. Moreover, there exists $f\in \F(G')$ such that $v(T_0[f])\ge 1$. If $v(T_0[f])\ge 2$, then $d(T_0[f])\ge 3-\gamma$ as $T$ is a minimum counterexample. 
As above, $d(T_0)\ge d(G')+d(T_0[f])\ge 5-(2\alpha+\epsilon)-\gamma$ and (3) holds, as desired. If $v(T_0[f])=1$, then $d(T_0[f])\ge 2-(2\alpha+\epsilon)$ by Proposition~\ref{d0}. 
As above, $d(T_0)\ge d(G')+d(T[f])\ge 2(2-(2\alpha+\epsilon))=4-2(2\alpha+\epsilon)$ and (1)
 and (2) hold, as desired. Yet if $v(T_0)\ge 3$, there must be two such faces if no face has at least two internal vertices. In that case then, $d(T_0)\ge 3(2-(2\alpha+\epsilon)) = 6 - 3(2\alpha+\epsilon)$ which is at least $5-(2\alpha+\epsilon)-\gamma$ by inequality (I2) and (3) holds, as desired.

So suppose $v(G')= 0$. As $G'\ne C_0$, $d(T_0[G'])\ge |E(G')\setminus E(C_0)|$ by Proposition~\ref{d0}.  As $v(T_0)\ge 2$, either there exists $f\in\F(G')$ such that $v(T_0[f])\ge 2$ or there exist $f_1,f_2\in \F(G')$ such that $v(T_0[f_1]),v(T_0[f_2])\ge 1$. Suppose the first case. Then $d(T_0[f])\ge 3-\gamma$ as $T$ is a minimum counterexample. Hence $d(T_0)\ge |E(G')\setminus E(C_0)|+3-\gamma$. Since $|E(G')\setminus E(C_0)|\ge 1$, $d(T_0)\ge 4-\gamma$ and (1) holds, as desired. If $|E(G')\setminus E(C_0)|\ge 2$, then $d(T_0)\ge 5-\gamma$ and (2) and (3) hold, as desired. So suppose the latter. Then $d(T_0[f_1]),d(T_0[f_2])\ge 2-(2\alpha+\epsilon)$ and $d(T_0)\ge 1+2(2-(2\alpha+\epsilon)) = 5-2(2\alpha+\epsilon)$, and all three statements hold as desired, since $2\alpha+\epsilon\le \gamma$ by inequality (I2).
\end{proof}

\begin{claim}\label{NoProperCrit}
There does not exist a proper $C$-critical subgraph $G'$ of $G$.
\end{claim}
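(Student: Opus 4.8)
\textbf{Proof proposal for Claim~\ref{NoProperCrit}.}

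The plan is to argue by contradiction using the minimality of the counterexample $T=(G,C,L)$. So suppose $G'$ is a proper $C$-critical subgraph of $G$. The first step is to observe that $T$ itself plays the role of $T_0$ in Claim~\ref{ProperCrit}: indeed $|E(G)|\le|E(G)|$ trivially, $v(T)\ge 5\ge 2$ by Claim~\ref{Base}, and $(G',C,L)$ is a critical canvas by hypothesis (restricting $L$ to $G'$; note $C\subseteq G'$ since $G'$ is $C$-critical, and $G'$ is $2$-connected by Lemma~\ref{2Conn}). Therefore Claim~\ref{ProperCrit}(1) applies and yields $d(T)\ge 4-\gamma$. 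Since $4-\gamma\ge 3-\gamma$, this immediately contradicts the assumption that $T$ is a counterexample to Theorem~\ref{StrongLinear}, and we are done.

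The only subtlety is making sure the hypotheses of Claim~\ref{ProperCrit} are met cleanly. First, $G'$ is required to be a \emph{proper} subgraph of $G$, which is exactly the content of our contradiction hypothesis, and moreover $G'\ne C$ since a $C$-critical graph is by definition different from $C$. Second, one needs that $(G',C,L)$ (or rather $(G',C,L|_{G'})$, extended artificially as in the definition of canvas) is genuinely a \emph{critical canvas}: $G$ is a subgraph of the ground graph for which $L$ is defined, hence so is $G'$; $|L(v)|\ge 5$ for $v\in V(G')\setminus V(C)\subseteq V(G)\setminus V(C)$; there is an $L$-coloring of $C$ because $(G,C,L)$ is a canvas; $G'$ is $2$-connected by Lemma~\ref{2Conn} applied to the $C$-critical plane graph $G'$; and $G'$ is $C$-critical by assumption. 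So indeed $(G',C,L)$ is a critical canvas, and Claim~\ref{ProperCrit} applies with $T_0=T$ and this choice of $G'$.

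There is essentially no hard part here: the claim is a one-line consequence of the much more substantial Claim~\ref{ProperCrit}. If I wanted to avoid invoking the full strength of Claim~\ref{ProperCrit}, I could instead argue directly via Proposition~\ref{surplussum} applied to $G'\subseteq G$: one gets $d(T)\ge d(T[G'])+\sum_{f\in\F(G')}d(T[f])$, where $d(T[G'])\ge 3-\gamma$ by minimality of the counterexample (if $v(G')\ge 2$) or is handled by Proposition~\ref{d0} (if $v(G')\le 1$), while the sum over faces is at least $1$ because $G'$ is a proper subgraph so some internal face of $G'$ contains a vertex or edge of $G$ and contributes $d\ge 1$ by minimality and Proposition~\ref{d0}. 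Either route reaches $d(T)\ge 4-\gamma > 3-\gamma$, contradicting that $T$ is a counterexample; invoking Claim~\ref{ProperCrit} is the cleanest. The upshot of the claim is that henceforth we may assume $G$ has no proper $C$-critical subgraph, which will be used repeatedly in subsequent reductions.
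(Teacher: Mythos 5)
Your proof is correct and is exactly the paper's argument: the paper also derives this claim by applying Claim~\ref{ProperCrit} with $T_0=T$ to get $d(T)\ge 4-\gamma\ge 3-\gamma$, contradicting that $T$ is a counterexample. Your additional verification of the hypotheses of Claim~\ref{ProperCrit} is fine but not something the paper spells out.
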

\begin{proof}
This follows from Claim~\ref{ProperCrit} applied to $T_0=T$.
\end{proof}

\begin{claim}\label{Chord}
There does not exist a chord of $C$.
\end{claim}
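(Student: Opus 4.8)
The plan is to suppose for contradiction that $C$ has a chord $e$, and derive a contradiction with the minimality of $T$ as a counterexample to Theorem~\ref{StrongLinear}. Let $C_1$ and $C_2$ be the two cycles of $C+e$ other than $C$, and set $G_i := G\langle C_i\rangle$ and $T_i := T\langle C_i\rangle$ for $i\in\{1,2\}$. Since $|V(C_1)|+|V(C_2)| = |V(C)|+2$ and $e$ is a chord, each $G_i$ is a proper subgraph in the sense that $G_i \ne C_i$ for at least one $i$ — indeed, since $v(T)\ge 5$ by Claim~\ref{Base}, at least one of $v(T_1), v(T_2)$ is nonzero, so by Corollary~\ref{SubCycle} that $T_i$ is a critical canvas. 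The key inequality to invoke is the first part of Corollary~\ref{ChordOr2Path}:
$$d(T) \ge d(T_1) + d(T_2) + 1.$$
The strategy is then to bound each $d(T_i)$ from below, splitting into cases according to whether $v(T_i)$ is $0$, $1$, or at least $2$, and show that in every case $d(T_1)+d(T_2)+1 \ge 3-\gamma$, contradicting that $T$ is a counterexample.

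The first step is to record the case-by-case lower bounds for $d(T_i)$. If $v(T_i)\ge 2$, then $T_i$ is a critical canvas with $|E(G_i)| < |E(G)|$ (strict, since $e\notin E(G_i)$ whenever $G_i$ is a proper subdisk, but more carefully: $C_1\cup C_2$ properly contains $C$ only through $e$, so one must check the edge count), hence by minimality (M1) we get $d(T_i)\ge 3-\gamma$. If $v(T_i)=1$, then by Proposition~\ref{d0}(ii) and the fact that a critical canvas with one internal vertex has that vertex of degree at least $5$ (Proposition~\ref{Facts}(2)), we get $|E(G_i)\setminus E(C_i)|\ge 5$, so $d(T_i)\ge 5-3-(2\alpha+\epsilon) = 2-(2\alpha+\epsilon)$; and if $v(T_i)=0$ but $G_i\ne C_i$ then by Proposition~\ref{d0}(i), $d(T_i)\ge 1$, while if $G_i = C_i$ then $d(T_i) = |E(G_i)\setminus E(C_i)|$... wait — I must be careful: when $v(T_i)=0$ and $G_i=C_i$, then $d(T_i)=d(G_i)={\rm def}(G_i)-s(G_i) = 0 - 0 = 0$. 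So the clean bound is: $d(T_i)\ge 0$ always, $d(T_i)\ge 1$ if $G_i$ includes a vertex or edge of $G$ not on $C_i$, $d(T_i)\ge 2-(2\alpha+\epsilon)$ if $v(T_i)=1$, and $d(T_i)\ge 3-\gamma$ if $v(T_i)\ge 2$.

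The second step is to combine. Since $v(T)\ge 5$, we cannot have both $v(T_i)=0$. The worst case is when one $T_i$ has $v(T_i)=0$ with $G_i=C_i$ (contributing $0$) and the other carries all the internal vertices. If $v(T_j)\ge 2$ for that $j$, then $d(T)\ge 0 + (3-\gamma) + 1 = 4-\gamma \ge 3-\gamma$, done. If instead both $T_i$ are "small" — e.g. $v(T_1)=1$ and $v(T_2)\le 1$ — then $v(T)\le 2 < 5$, contradicting Claim~\ref{Base}, so this subcase is vacuous; hence whenever neither $T_i$ reaches $v\ge 2$ we already have a contradiction with Claim~\ref{Base}. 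More cleanly: since $v(T)=v(T_1)+v(T_2)\ge 5$, at least one $T_i$ has $v(T_i)\ge 3 \ge 2$, so that $T_i$ is critical and $d(T_i)\ge 3-\gamma$, the other satisfies $d(T_{3-i})\ge 0$, and $d(T)\ge (3-\gamma)+0+1 = 4-\gamma \ge 3-\gamma$. This contradicts $T$ being a counterexample, completing the proof. I expect the main obstacle to be purely bookkeeping: verifying that $|E(G_i)| \le |E(G)|$ (or the appropriate strict inequality) so that minimality (M1) applies to $T_i$, and handling the degenerate possibility that one of $C_1, C_2$ coincides with $G_i$ — but since $v(T)\ge 5$ forces the nontrivial side to have $v\ge 2$, the argument goes through without needing the finer estimates (2) or (3) of Claim~\ref{ProperCrit}.
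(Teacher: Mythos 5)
Your argument is correct, but it takes a different route from the paper. The paper's proof is a two-line reduction: since $v(T)\ne 0$, the graph $C+e$ is a \emph{proper} subgraph of $G$, and it is $C$-critical (with respect to a suitable list assignment), so Claim~\ref{NoProperCrit} --- i.e.\ Claim~\ref{ProperCrit} applied to $T_0=T$ --- gives an immediate contradiction. You instead bypass Claims~\ref{ProperCrit} and~\ref{NoProperCrit} entirely and argue directly: decompose along the chord via the first inequality of Corollary~\ref{ChordOr2Path}, note $v(T_1)+v(T_2)=v(T)\ge 5$ by Claim~\ref{Base} so some side has $v(T_i)\ge 2$ and strictly fewer edges, apply minimality (M1) to get $d(T_i)\ge 3-\gamma$ there, and check $d(T_{3-i})\ge 0$ in the remaining cases via Proposition~\ref{d0} and Proposition~\ref{Facts}(2). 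In effect you re-derive the special case of Claim~\ref{ProperCrit} with $G'=C+e$ (whose proof runs the same face-decomposition through Proposition~\ref{surplussum}), trading the paper's reuse of that general claim for an appeal to Claim~\ref{Base}; since Claim~\ref{Base} is established before this point, that is legitimate, and it is what lets you avoid the finer $2-(2\alpha+\epsilon)$ estimates when neither side would have two internal vertices. Two small bookkeeping remarks: your parenthetical ``$e\notin E(G_i)$'' is not the right reason for strictness (indeed $e\in E(C_i)\subseteq E(G_i)$); the strict inequality $|E(G_i)|<|E(G)|$ holds because the edges of $C$ on the other side of the chord are absent from $G_i$. With that noted, the appeal to (M1) is sound and the proof goes through.
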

\begin{proof}
Suppose there exists a chord $e$ of $C$. Let $G'=C+e$. As $v(T)\ne 0$, $G'$ is a proper subgraph of $G$. Yet $G'$ is $C$-critical, contradicting Claim~\ref{NoProperCrit}.  
\end{proof}

\subsection{Dividing Vertices}

\begin{definition}
Let $G_0$ be a $2$-connected plane graph with outer cycle $C_0$.
Let $v$ be an internal vertex of $G_0$ and suppose there exist two distinct faces $f_1,f_2\in \F(G_0)$ such that for $i\in \{1,2\}$ the boundary of $f_i$ includes $v$ and a vertex of $C_0$, say $u_i$. 
Let us assume that $u_1\ne u_2$ and let
$G'$ be the plane graph obtained from $G_0$ by adding the edges $u_1v, u_2v$ if they are not present in $G_0$. 
Consider the cycles $C_1,C_2$ of $G'$, where $C_1\cap C_2 =u_1vu_2$ and $C_1\cup C_2 = C_0\cup u_1vu_2$. 
If for both $i\in \{1,2\}$ we have $|E(T\langle C_i\rangle)\setminus E(C_i)|\ge 2$, then we say that $v$ is a \emph{dividing} vertex of $G_0$. 
If for both $i\in\{1,2\}$ we have $v(T\langle C_i \rangle)\ge 1$, we say $v$ is a \emph{strong dividing} vertex of $G_0$. 
If $v$ is a dividing vertex of $G_0$ and the edges $u_1v, u_2v$ belong to $G_0$, then we say that $v$ is a \emph{true dividing} vertex of $G_0$.
If  $T_0=(G_0,C_0,L_0)$ is a canvas, then by a \emph{dividing} vertex of $T_0$ we mean a
dividing vertex of $G_0$, and similarly for  strong and true dividing vertices.
\end{definition}

 
\begin{claim}\label{DividingTrue}
Suppose $T_0=(G_0,C_0,L_0)$ is a critical canvas with $|E(G_0)|\le |E(G)|$ and $v(T_0)\ge 2$. If $G_0$ contains a true dividing vertex, then 

\begin{enumerate}
\item[{\rm(1)}] $d(T_0)\ge 3-2(2\alpha+\epsilon)$, and
\item[{\rm(2)}] $d(T_0)\ge 4-2\alpha-\epsilon-\gamma$ if $v(T_0)\ge 3$. 
\end{enumerate}
\end{claim}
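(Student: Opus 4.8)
\textbf{Plan of proof for Claim~\ref{DividingTrue}.}
The idea is to use the true dividing vertex $v$ with its two neighbors $u_1,u_2\in V(C_0)$ to split $T_0$ along the path $u_1vu_2$ into the two subcanvases $T_0\langle C_1\rangle$ and $T_0\langle C_2\rangle$, and then apply Corollary~\ref{SubCycle} to recognize each piece as a (critical canvas or a degenerate triangle/path) and estimate $d$ using Corollary~\ref{ChordOr2Path}. Concretely, first I would invoke Corollary~\ref{ChordOr2Path} (second inequality, with the cycle $C_1\cup C_2=C_0\cup u_1vu_2$) to get
\[
d(T_0)\ge d(T_0\langle C_1\rangle)+d(T_0\langle C_2\rangle)-1-(2\alpha+\epsilon).
\]
Then I would lower-bound each $d(T_0\langle C_i\rangle)$. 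Because $v$ is a \emph{true} dividing vertex, the edges $u_1v,u_2v$ already lie in $G_0$, so $C_1,C_2$ are genuine cycles of $G_0$, and since $|E(T_0\langle C_i\rangle)\setminus E(C_i)|\ge 2$ we have $G_0\langle C_i\rangle\ne C_i$; hence by Corollary~\ref{SubCycle} each $T_0\langle C_i\rangle$ is a critical canvas with strictly fewer edges than $G_0$ (each omits at least one of the two edges $u_{3-i}v$, $\ldots$ — actually it omits all of the interior of the other side), so that $|E(G_0\langle C_i\rangle)|<|E(G_0)|\le|E(G)|$.

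The next step is case analysis on $v(T_0\langle C_i\rangle)$. If $v(T_0\langle C_i\rangle)\ge 2$, minimality of the counterexample $T$ (or Theorem~\ref{StrongLinear} itself, since the piece has strictly fewer edges than $G$) gives $d(T_0\langle C_i\rangle)\ge 3-\gamma$. If $v(T_0\langle C_i\rangle)=1$, then by Proposition~\ref{d0}(ii) and criticality (an internal vertex has degree $\ge 5$ by Proposition~\ref{Facts}(2), so $|E(G_0\langle C_i\rangle)\setminus E(C_i)|\ge 5$, but more carefully at least $3$ suffices here) we get $d(T_0\langle C_i\rangle)\ge 2-(2\alpha+\epsilon)$. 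If $v(T_0\langle C_i\rangle)=0$, then $G_0\langle C_i\rangle\ne C_i$ forces an extra edge (a chord of $C_i$), so by Proposition~\ref{d0}(i) $d(T_0\langle C_i\rangle)\ge|E(G_0\langle C_i\rangle)\setminus E(C_i)|\ge 1$; and since $v(T_0)\ge 2 = v(T_0\langle C_1\rangle)+v(T_0\langle C_2\rangle)$, at most one side can have zero internal vertices. Plugging the worst admissible combination into the displayed inequality: for (1), the minimum over the allowed cases is achieved when one side contributes $\ge 1$ (the $v=0$ side) and the other $\ge 2-(2\alpha+\epsilon)$, giving $d(T_0)\ge 1+(2-(2\alpha+\epsilon))-1-(2\alpha+\epsilon)=2-2(2\alpha+\epsilon)$; to reach the claimed $3-2(2\alpha+\epsilon)$ one observes the $v=0$ side actually contributes $\ge 2$ whenever the other side has $v\ge 1$ (since with $v(T_0)\ge 2$ that side then needs two internal vertices if no face has two, forcing $|E\setminus E(C)|\ge 2$), so the true minimum is $2+(2-(2\alpha+\epsilon))-1-(2\alpha+\epsilon)=3-2(2\alpha+\epsilon)$. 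Then (2) follows the same way, upgrading the piece with $v\ge 2$ (which must exist when $v(T_0)\ge 3$ and the other side is small) to the bound $3-\gamma$, and using inequalities (I1)--(I3) to absorb the constants: $d(T_0)\ge (3-\gamma)+(2-(2\alpha+\epsilon))-1-(2\alpha+\epsilon)=4-2(2\alpha+\epsilon)-\gamma$, which is at least $4-2\alpha-\epsilon-\gamma$ since $2\alpha+\epsilon\ge 0$ — here one needs to be slightly more careful and note that only one extra $(2\alpha+\epsilon)$ should be charged, which comes from tracking that on the $v\ge 2$ side one does not pay the Proposition~\ref{d0}(ii) penalty.

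\textbf{Main obstacle.} The delicate point is not the splitting itself but bookkeeping the constants: the $-1-(2\alpha+\epsilon)$ loss from Corollary~\ref{ChordOr2Path} has to be paid for by the surplus coming from the structural facts (degree $\ge 5$ internal vertices, no separating triangles, and the pigeonhole $v(T_0)\ge 2$ or $\ge 3$ distributed over two sides). I expect the real work to be verifying, in each branch of the case analysis on $(v(T_0\langle C_1\rangle),v(T_0\langle C_2\rangle))$, that the combination of Proposition~\ref{d0}, Proposition~\ref{Facts}, and minimality yields enough to offset that loss, and in particular checking that the $v(T_0\langle C_i\rangle)=0$ side genuinely contributes $2$ rather than just $1$ in the relevant configurations — this is exactly the kind of edge-counting argument used in Claim~\ref{ProperCrit}, and I would model the details on that proof. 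A secondary subtlety is ensuring $u_1\ne u_2$ and that the two faces $f_1,f_2$ are genuinely distinct so that $C_1,C_2$ are well-defined non-degenerate cycles, but this is guaranteed by the definition of dividing vertex.
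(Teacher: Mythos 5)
Your overall strategy is the same as the paper's (split along $u_1vu_2$, apply Corollary~\ref{ChordOr2Path}/Proposition~\ref{surplussum}, bound each side via Proposition~\ref{d0} and the minimality of the counterexample $T$), but there is a genuine gap in the case analysis: you never deal with the case in which the smaller side has exactly one internal vertex. If $v(T_0\langle C_1\rangle)=v(T_0\langle C_2\rangle)=1$, your bounds give only $2\bigl(2-(2\alpha+\epsilon)\bigr)-1-(2\alpha+\epsilon)=3-3(2\alpha+\epsilon)$, which is short of the claimed $3-2(2\alpha+\epsilon)$ in (1); similarly, for (2) the split $(1,\ge 2)$ gives only $4-\gamma-2(2\alpha+\epsilon)$, short of $4-(2\alpha+\epsilon)-\gamma$ (indeed your own computation for (2) produces $4-2(2\alpha+\epsilon)-\gamma$ and the claim that this is at least $4-2\alpha-\epsilon-\gamma$ has the inequality backwards; the subsequent remark that ``only one extra $(2\alpha+\epsilon)$ should be charged'' is not substantiated). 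Your assertion that the worst admissible combination has a side with zero internal vertices is exactly what needs proof. The paper's extra idea, which your proposal is missing, is an extremal choice: among all true dividing vertices, pick $v$ minimizing $a=\min\{v(T_0\langle C_1\rangle),v(T_0\langle C_2\rangle)\}$, and observe that $a\ne 1$, because a side with a single internal vertex $w$ forces $w$ (which has degree at least five, all neighbors on $C_1$, hence at least four neighbors on $C_0$) to be another true dividing vertex with smaller $a$. Only after excluding $a=1$ do the two remaining cases $a=0$ and $a\ge 2$ yield the stated constants.

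Two smaller points. First, in the $v=0$ case the correct reason that side contributes at least $2$ is simply the definition of a dividing vertex, which requires $|E(T_0\langle C_i\rangle)\setminus E(C_i)|\ge 2$ on \emph{both} sides (you quote this condition earlier but then argue for ``$\ge 1$, upgraded to $\ge 2$'' by an irrelevant consideration about internal vertices); with Proposition~\ref{d0}(i) this immediately gives $d(T_0\langle C_i\rangle)\ge 2$. Second, rather than Corollary~\ref{ChordOr2Path}, the paper applies Proposition~\ref{surplussum} with $G'=C_1\cup C_2$ and computes $d(T_0[G'])=-1-(2\alpha+\epsilon)$ exactly via Proposition~\ref{d0}(ii), using that $v$ is \emph{true} so that $u_1v,u_2v\in E(G_0)$; this is equivalent to your step, so that part of your plan is fine.
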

\begin{proof}
Note that the inequality in (2) implies the inequality in (1) by inequality (I3). Let $u_1,u_2, C_1, C_2$ and $v$ be as in the definition of true dividing vertex. Since $v$ is true, $u_1v$ and $u_2v$ belong to $G_0$. Let $G'=C_1\cup C_2$. Hence $G', C_1$ and $C_2$ are subgraphs of $G_0$. Thus $d(T_0[G'])=-1-(2\alpha+\epsilon)$ by Proposition~\ref{d0}(ii).

Note that, by Corollary~\ref{SubCycle}, $T_0\langle C_1 \rangle, T_0 \langle C_2\rangle$ are critical canvases. If $v(T_0\langle C_1 \rangle)=0$, then $d(T_0\langle C_1 \rangle)\ge 2$ by Proposition~\ref{d0}(i), because $|E(T_0[C_1])\setminus E(C_1)|\ge 2$ by the definition of dividing vertex. If $v(T_0\langle C_1 \rangle)=1$, then $d(T_0\langle C_1 \rangle)\ge 2-(2\alpha+\epsilon)$ by Proposition~\ref{d0}(ii). If $v(T_0\langle C_1 \rangle)\ge 2$, then $d(T_0\langle C_1 \rangle)\ge 3-\gamma$ as $T$ is a minimum counterexample. In any case, $d(T_0\langle C_1\rangle)\ge 2-(2\alpha+\epsilon)$ as $\gamma\le 1+(2\alpha+\epsilon)$ by inequality (I3). Similarly, $d(T_0[C_2])\ge 2-(2\alpha+\epsilon)$. 

By Proposition~\ref{surplussum}, $d(T_0)\ge d(T_0[G'])+d(T_0\langle C_1\rangle)+d(T_0\langle C_2\rangle)$. Now let us choose $v$ such that $a=\min \{v(T_0\langle C_1 \rangle), v(T_0\langle C_2 \rangle)\}$ is minimized. 
Note then that $a\ne 1$, as otherwise there exists another true dividing vertex,
 contradicting the minimality of $a$. First suppose that $a\ge 2$ and hence 

$$d(T_0) \ge (-1-(2\alpha+\epsilon)) + 2(3-\gamma) = 5-2\gamma - (2\alpha+\epsilon)$$

\noindent and (1) and (2) hold by inequality (I3), as desired.  So we may suppose that $a=0$. Hence 

$$d(T_0) \ge (-1-(2\alpha+\epsilon)) + 2 + 2-(2\alpha+\epsilon) = 3-2(2\alpha+\epsilon)$$

\noindent and (1) holds, as desired. Yet if $v(T_0)\ge 3$, then 

$$d(T_0) \ge (-1-(2\alpha+\epsilon)) + 2 + 3-\gamma = 4-\gamma-(2\alpha+\epsilon)$$

\noindent and (2) holds, as desired.
\end{proof}

\begin{claim}\label{DividingStrong}
Suppose $T_0=(G_0,C_0,L_0)$ is a critical canvas with $|E(G_0)|\le |E(G)|$. If $G_0$ contains a strong dividing vertex, then $d(T_0)\ge 4-2\gamma$.
\end{claim}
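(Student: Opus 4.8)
The plan is to proceed exactly as in the proof of Claim~\ref{DividingTrue}, but now exploiting the stronger hypothesis that the dividing vertex is \emph{strong}, which by definition forces $v(T_0\langle C_i\rangle)\ge 1$ for \emph{both} $i\in\{1,2\}$. First I would let $u_1,u_2,v,C_1,C_2$ be as in the definition of a strong dividing vertex, noting that a strong dividing vertex is in particular a dividing vertex, so $|E(T_0\langle C_i\rangle)\setminus E(C_i)|\ge 2$ as well. Unlike in Claim~\ref{DividingTrue}, here $v$ need not be a \emph{true} dividing vertex, so the edges $u_1v,u_2v$ may not lie in $G_0$; I would still set $G'=C_1\cup C_2$, but now $G'$ may have one or two more edges than $G_0$ on the path $u_1vu_2$. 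In the worst case both edges are new, so $G'$ has one internal vertex and two or three edges off $C_0$; by Proposition~\ref{d0}(ii), $d(T_0[G'])\ge -1-(2\alpha+\epsilon)$ still holds (and could be as large as $-(2\alpha+\epsilon)$). Actually, to be safe I would use $d(T_0[G'])\ge -1-(2\alpha+\epsilon)$, the same bound as before.

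Next I would bound $d(T_0\langle C_i\rangle)$ for $i=1,2$. By Corollary~\ref{SubCycle} each $T_0\langle C_i\rangle$ is a critical canvas (it is not just a cycle, since it has at least two edges off $C_i$). Since $v$ is a strong dividing vertex, $v(T_0\langle C_i\rangle)\ge 1$. If $v(T_0\langle C_i\rangle)=1$, then $d(T_0\langle C_i\rangle)\ge 2-(2\alpha+\epsilon)$ by Proposition~\ref{d0}(ii) and criticality; if $v(T_0\langle C_i\rangle)\ge 2$, then $d(T_0\langle C_i\rangle)\ge 3-\gamma$ since $T$ is a minimum counterexample, and $3-\gamma\ge 2-(2\alpha+\epsilon)$ by inequality~(I2). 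Hence in all cases $d(T_0\langle C_i\rangle)\ge 2-(2\alpha+\epsilon)$ for $i=1,2$. Then by Proposition~\ref{surplussum} applied to $G'$,
$$d(T_0)\ge d(T_0[G'])+d(T_0\langle C_1\rangle)+d(T_0\langle C_2\rangle)\ge \bigl(-1-(2\alpha+\epsilon)\bigr)+2\bigl(2-(2\alpha+\epsilon)\bigr)=3-3(2\alpha+\epsilon).$$
This is not yet the claimed $4-2\gamma$, so I will need to squeeze more out, presumably by choosing $v$ to minimize $a:=\min\{v(T_0\langle C_1\rangle),v(T_0\langle C_2\rangle)\}$, just as in Claim~\ref{DividingTrue}: if $a\ge 2$ then both canvases contribute $3-\gamma$ and we get $d(T_0)\ge -1-(2\alpha+\epsilon)+2(3-\gamma)=5-2\gamma-(2\alpha+\epsilon)\ge 4-2\gamma$ using $2\alpha+\epsilon\le 1$ from~(I3); and if $a=1$, the minimality of $a$ should be used to derive a contradiction or to locate a second strong dividing vertex, forcing a better decomposition. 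The case $a=1$ (one side has exactly one internal vertex) is where I expect the main difficulty: I will need to argue that this internal vertex, together with its position relative to $C_1$, yields \emph{another} strong dividing vertex — contradicting the minimal choice of $a$ — or else directly that $d(T_0\langle C_i\rangle)$ is larger than the naive bound (e.g. that the single internal vertex of the small side has degree at least five into a short cycle, inflating $|E\setminus E(C_i)|$ and hence the deficiency beyond $2-(2\alpha+\epsilon)$). Getting that bookkeeping exactly right, in particular handling whether $u_1v,u_2v\in E(G_0)$, is the delicate part; once both sides have at least two internal vertices the inequality follows cleanly from~(I3).
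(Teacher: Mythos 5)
There is a genuine gap: your argument treats the strong dividing vertex as if it were a \emph{true} dividing vertex. By definition, a strong dividing vertex $v$ is only required to be cofacial with $u_1$ and $u_2$; the edges $u_1v,u_2v$ may be absent from $G_0$, so $G'=C_1\cup C_2$ need not be a subgraph of $G_0$ and the cycles $C_1,C_2$ need not be cycles of $G_0$. Consequently Corollary~\ref{SubCycle} does not give that $T_0\langle C_1\rangle$ and $T_0\langle C_2\rangle$ are critical canvases, Proposition~\ref{surplussum} and Lemma~\ref{defsum} do not apply to this $G'$, and the bounds $d(T_0\langle C_i\rangle)\ge 2-(2\alpha+\epsilon)$ (via criticality) and $d(T_0\langle C_i\rangle)\ge 3-\gamma$ (via minimality of $T$, which only speaks about \emph{critical} canvases) are all unjustified. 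This is exactly the difficulty the claim exists to overcome; your outline essentially re-proves Claim~\ref{DividingTrue} and then defers the real work.

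You also have the case analysis inverted. In the paper's proof the case $v(T_0\langle C_1\rangle)=1$ is the easy one: the unique internal vertex on that side is a true dividing vertex of $T_0$, and since $v$ strong forces $v(T_0)\ge 3$, Claim~\ref{DividingTrue}(2) with (I2) already gives $d(T_0)\ge 4-2\alpha-\epsilon-\gamma\ge 4-2\gamma$ (no minimization over choices of $v$ is needed). The hard case is when both sides have at least two internal vertices, and it is handled not by cutting along edges of $G_0$ but by a new construction: add two new vertices $z_1,z_2$ with lists of five fresh colors and the path $u_1z_1vz_2u_2$ inside the two faces; use the fact that the non-extendable coloring $\phi_0$ of $C_0$ fails, for each of the at least five colors $c\in L(v)$, on one of the two sides, so by pigeonhole at least three colors fail on one side; then build from that side a genuinely critical canvas $T_1$ (adding the edge $z_1z_2$, giving $z_1,z_2$ singleton new colors and replacing $L(v)$ by those three colors plus the two new ones), verify its criticality by a short coloring argument using Claim~\ref{ProperCrit}, and finally redo the deficiency and $s$-bookkeeping to account for the added vertices and edges, obtaining $d(T_0)\ge d(T_1)+d(T_0'\langle C_2'\rangle)-2\ge 2(3-\gamma)-2=4-2\gamma$. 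None of this machinery appears in your proposal, and without it the step you flag as ``delicate bookkeeping'' cannot be completed along the lines you describe.
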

\begin{proof}
Let $u_1,u_2, C_1, C_2$ and $v$ be as in the definition of strong dividing vertex. As $v$ is strong, $v(T_0)\ge 3$. Note that $v(T_0\langle C_1\rangle)\ge 1$ as $v$ is strong.
 If $v(T_0\langle C_1 \rangle)=1$, then the unique vertex in $V(T_0\langle C_1 \rangle)\setminus V(C_1)$ 
is a true dividing vertex and hence
$$d(T_0)\ge 4-2\alpha-\epsilon-\gamma\ge 4-2\gamma$$
by Claim~\ref{DividingTrue} and inequality (I2), as desired.
So we may assume that $v(T_0\langle C_1 \rangle)\ge 2$ and similarly that $v(T_0\langle C_2\rangle)\ge 2$.





Let $G_0'$ be the graph obtained from $G_0$ by adding vertices $z_1,z_2$ and edges $u_1z_1,z_1v,vz_2,z_2u_2$. 
Similarly let $G'$ be the graph obtained from $C_0$ by adding vertices $v,z_1,z_2$ and edges 
$u_1z_1$, $z_1v$, $vz_2$, $z_2u_2$.
 Let $L_0'(x)=L_0(x)$ for all $x\in V(G_0)$ and $L_0(z_1)=L(z_2)=R$, where $R$ is a set of five new colors,
and let $T_0'=(G_0',C_0,L_0')$.
Now 
$${\rm def}(G')=|E(G')|-|E(C_0)|-3v(G')=4-3\cdot3=-5.$$

Since $G_0$ is $C_0$-critical,  there exists a coloring $\phi_0$ of $C_0$ that  does not extend to $G_0$.
By Claim~\ref{ProperCrit} the graph $G_0$ does not have a proper $C_0$-critical subgraph, and hence
$\phi_0$ extends to every proper subgraph of $G_0$ by Proposition~\ref{p:critsubcanvas}.
%
For every $c\in L(v)$, let $\phi_c(v)=c$, $\phi_c(z_1)=\phi_c(z_2)\in R$ and $\phi_c(x)=\phi_0(x)$ for all $x\in C_0$. 
Let $C_1',C_2'$ be the two facial cycles of $G'$ other than $C_0$. 
Since $\phi_0$ does not extend to $G_0$, for every $c\in L(v)$ the coloring  $\phi_c$ does not extend to an $L$-coloring of either $G_0'\langle C_1' \rangle$ or $G_0'\langle C_2' \rangle$. 
Since $|L(v)|\ge 5$
there exists $i\in\{1,2\}$ such that there exist at least three colors $c$ in $L(v)$ such that $\phi_c$ does not extend to an $L$-coloring of $G_0'\langle C_i'\rangle$. We may assume without loss of generality that $i=1$. 
Let $\C$ be the set of all colors $c\in L(v)$ such that $\phi_c$ does not extend to an $L$-coloring of $G_0' \langle C_1' \rangle$.
Thus $|\C|\ge3$.

Let $G_1'$ be the graph obtained from $G_0'\langle C_1'\rangle$ by adding the edge $z_1z_2$
inside the outer face of $G_0'\langle C_1'\rangle$.
Let $C_1''=(C_1'\setminus v)+z_1z_2$. Let $L'(z_1)=\{c_1\}, L'(z_2)=\{c_2\}$ where $c_1,c_2$ are two distinct entirely new colors,  let $L'(v)=\C\cup \{c_1,c_2\}$, and let $L'(x)=L(x)$ for every $x\in V(G_0)\setminus\{v\}$.

We claim that the canvas $T_1=(G_1',C_1'',L')$ is a critical canvas. 
To see this let $H$ be a proper subgraph of $G_1'$ that includes $C_1''$ as a subgraph. 
Let us extend $\phi_0$ by defining $\phi_0(z_1):=c_1$ and $\phi_0(z_2):=c_2$. 
We will show that (the restriction to $C_1''$ of) $\phi_0$  extends to $H$ but not to $G_1'$. 
If $\phi_0$ extended to $G_1'$, then $\phi_0(v)\not\in\C$ by the definition of $\C$ and
$\phi_0(v)\not\in\{c_1,c_2\}$,  because $v$ is adjacent to $z_1,z_2$, a contradiction.
Thus $\phi_0$ does not extend to $G_1'$.
To show that $\phi_0$  extends to $H$ assume first that $H\setminus\{z_1,z_2\}$ is a proper subgraph of 
$G_0\langle C_1'\rangle$.
Then $(H\setminus\{z_1,z_2\})\cup G_0\langle C_2'\rangle$ is a proper subgraph of $G_0$,
and hence $\phi_0$ extends to it, as desired.
So we may assume that $H\setminus\{z_1,z_2\}=G_0\langle C_1'\rangle$. Since $H$  is
a proper subgraph of $G_1'$  we may assume from the symmetry that $vz_1\not\in E(H)$. 
Now $\phi_0$ extends to an $L'$-coloring of $G_0\setminus v$. Letting $\phi_0(v)=c_1$ shows that $\phi_0$ extends to $H$,
as desired.
This proves the claim that $T_1$ is critical.

As $v(T_1)\ge 2$, we find that $d(T_1)\ge 3-\gamma$ by the minimality of $T$. Also by the minimality of $T$, as $v(T_0'\langle C_2' \rangle)\ge 2$, $d(T_0'\langle C_2' \rangle)\ge 3-\gamma$. 
Let us now count deficiencies. By Lemma~\ref{defsum}, $${\rm def}(T_0')= {\rm def}(T_0'[G'])+{\rm def}(T_0'\langle C_1'\rangle)+{\rm def}(T_0'\langle C_2'\rangle) = -5 + {\rm def}(T_0'\langle C_1'\rangle)+{\rm def}(T_0'\langle C_2'\rangle).$$ Yet, ${\rm def}(T_0)={\rm def}(T_0')+2$. Furthermore, ${\rm def}(T_0' \langle C_1' \rangle)={\rm def}(T_1)+1$. Hence, $${\rm def}(T_0)={\rm def}(T_0'\langle C_1' \rangle)+{\rm def}(T_0'\langle C_2' \rangle)-3={\rm def}(T_1)+{\rm def}(T_0'\langle C_2' \rangle)-2.$$

Next we count the function $s$. We claim that $s(T_0)\le s(T_1)+s(T_0'\langle C_2' \rangle)$. This follows as every vertex of $V(G_0)\setminus V(C_0)$ is either in $V(G_1')\setminus V(C_1'')$ or $V(G_0'\langle C_2'\rangle) \setminus V(C_2')$. Moreover every vertex of $B(T_0)$ is either in $B(T_1)$ or $B(T_0'\langle C_2' \rangle)$ and similarly every vertex of $Q(T_0)$ is either in $Q(T_1)$ or $Q(T_0'\langle C_2' \rangle)$.

Finally putting it all together, we find that 
$$d(T_0) \ge d(T_1)+d(T_0'\langle C_2' \rangle)-2\ge 2(3-\gamma)-2 = 4-2\gamma,$$
as desired.
\end{proof}

\subsection{Tripods}

\begin{definition}
Let $G_0$  be a plane graph with outer cycle $C_0$, and let $v\in V(G_0)\setminus V(C_0)$
have at least three neighbors in $C_0$.
Let $u_1,u_2, \ldots,u_k$  be all the neighbors of $v$ in $C_0$ listed in their order of appearance on $C_0$.
Assume that at most one face of $G_0[V(C_0)\cup \{v\}]$ includes an edge or vertex of $G_0$,
and if such a face exists, then it is incident with $u_1$ and $u_k$.
If $k=3$, then we say that $v$ is {\em tripod of $G_0$}, and
if  $k\ge4$, then we say that $v$ is {\em quadpod of $G_0$}.
The tripod or quadpod $v$ is {\em regular} if there exists a face of $G_0[V(C_0)\cup \{v\}]$ that
 includes an edge or vertex of $G_0$.
If  such a face exists, then we say that $u_1,u_2, \ldots,u_k$ are listed in a {\em standard order}.
Note that every tripod of degree at least four is regular.


If $v$ is a regular tripod or quadpod, we let $C_0\xx{v}$ denote the boundary of the face  of $G_0[V(C_0)\cup \{v\}]$
 that includes an edge or vertex of $G_0$, and we define $G_0\xx{v}:=G_0\langle C_0\xx{v}\rangle$.
If $X$ is a set of tripods or quadpods of $G_0$ and there exists a face  of $G_0[V(C_0)\cup X]$ that includes 
an edge or vertex of $G_0$,
then we let $C_0\xx{X}$ denote the boundary of  such face and we define $G_0\xx{X}:=G_0\langle C_0\xx{X}\rangle$.

Now if $T_0=(G_0,C_0,L_0)$ is a canvas, then we extend all the above terminology to $T_0$
in the natural way. 
Thus we can speak of tripods or quadpods of $T_0$, we define $T_0\xx{X}:=T_0[G_0\xx{X}]$, etc.
\end{definition}


\begin{claim}\label{TripodOrDividing}
Let $T_0=(G_0,C_0,L_0)$ be a canvas with $v(T_0)\ge 2$ and let $v\in V(G_0)\setminus V(C_0)$
have at least three neighbors in $C_0$.
Then $v$ is either a regular tripod of $T_0$, or a true dividing vertex of $T_0$.
\end{claim}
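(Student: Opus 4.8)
The plan is to consider the vertex $v$ together with its neighbors $u_1, u_2, \ldots, u_k$ on $C_0$ (listed in their cyclic order along $C_0$), where $k \ge 3$. The cycle $C_0$ together with the paths $u_iv$ through $v$ divides the disk bounded by $C_0$ into $k$ regions, whose boundaries are cycles $D_1, \ldots, D_k$, where $D_i$ is bounded by $vu_i$, the arc of $C_0$ from $u_i$ to $u_{i+1}$, and $u_{i+1}v$ (indices mod $k$). First I would observe that these are precisely the faces of $G_0[V(C_0)\cup\{v\}]$ other than the outer face, so the hypothesis on $v$ says that at most one of them, say $D_k$, includes an edge or vertex of $G_0$, and all of $D_1, \ldots, D_{k-1}$ are ``empty'' in the sense that $G_0\langle D_i\rangle = D_i$.

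Next I would split into the two cases $k = 3$ and $k \ge 4$. If $k \ge 4$, then $v$ is by definition a quadpod, but more to the point I claim $v$ is then also a true dividing vertex: I would pick two of the faces among $D_1, \ldots, D_{k-1}$ that are incident with $v$ and with distinct vertices of $C_0$ — since $k - 1 \ge 3$ there are at least three empty faces around $v$, each already carrying two edges $vu_i$ of $G_0$, so one can choose faces $f_1, f_2$ on ``opposite sides'' so that the corresponding cycles $C_1, C_2$ (with $C_1 \cap C_2 = u_ivu_j$, $C_1\cup C_2 = C_0 \cup u_ivu_j$) each contain at least two edges of $G_0$ outside $C_1$, resp.\ $C_2$. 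Actually the cleanest choice: take the cycle $C_1 = D_1 \cup D_2 \cup \cdots$ versus the complementary cycle through $v$; I would need to check $|E(T_0\langle C_i\rangle)\setminus E(C_i)| \ge 2$ for both, which holds because each side picks up at least two of the spokes $vu_1, \ldots, vu_k$ when $k \ge 4$. The edges $vu_i, vu_j$ used to define $C_1, C_2$ already belong to $G_0$, so $v$ is a \emph{true} dividing vertex, as required.

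If $k = 3$, then $v$ is a tripod of $G_0$ by definition, and that is exactly the first alternative in the conclusion — so there is nothing more to prove in this case (the regularity of the tripod, if $G_0$ has anything inside, is built into the tripod definition via the ``at most one face includes an edge or vertex'' hypothesis; and a tripod need not be regular for the statement to hold). So the conclusion ``$v$ is a regular tripod or a true dividing vertex'' comes out as: $k=3$ gives tripod, $k \ge 4$ gives true dividing vertex. I should double-check the edge case where $k \ge 4$ but the nonempty face $D_k$ is one of the two sides used — this is fine since ``dividing'' only requires $|E(T_0\langle C_i\rangle)\setminus E(C_i)| \ge 2$, and the nonempty side has even more edges; the empty sides contribute their spokes.

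The main obstacle I anticipate is the bookkeeping in the $k \ge 4$ case: verifying that one can always choose the two neighbors $u_i, u_j$ so that \emph{both} resulting cycles $C_1, C_2$ enclose at least two edges not on the cycle itself. The worry is a degenerate configuration where almost all spokes lie on one side. I would handle this by choosing $u_i, u_j$ to be ``spread out'' — e.g., $i = 1$ and $j = \lceil k/2\rceil + 1$ or similar — so that each arc of $C_0$ between consecutive chosen neighbors contains at least one intermediate spoke-endpoint $u_\ell$; then each side $C_t$ contains at least the two spokes $vu_i, vu_j$ plus it bounds a region containing $vu_\ell$ inside, giving the needed count. Also I must make sure $v(T_0) \ge 2$ is actually used — it guarantees that the single nonempty face $D_k$ really does contain something, which is needed only if one wants $v$ to be a \emph{regular} tripod when $k = 3$; for the true-dividing conclusion when $k \ge 4$ the spokes alone suffice, but the hypothesis $v(T_0)\ge 2$ rules out the trivial case where $G_0 = G_0[V(C_0)\cup\{v\}]$ has only the one internal vertex $v$, in which case no face is nonempty and $v$ would be an (irregular) tripod only if $k=3$.
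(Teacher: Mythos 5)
There is a genuine gap, and it comes from misreading what is hypothesis and what is conclusion. You treat the condition ``at most one face of $G_0[V(C_0)\cup\{v\}]$ includes an edge or vertex of $G_0$'' as given (``the hypothesis on $v$ says that at most one of them, say $D_k$, includes an edge or vertex of $G_0$''), but that condition is part of the \emph{definition} of a tripod/quadpod, not a hypothesis of Claim~\ref{TripodOrDividing}; the claim only assumes $v$ has at least three neighbors on $C_0$ and $v(T_0)\ge 2$. Consequently your case $k=3$ is dispatched incorrectly: if $k=3$ but two or three of the faces of $G_0[V(C_0)\cup\{v\}]$ contain edges or vertices of $G_0$, then $v$ is not a tripod at all, and one must show it is a true dividing vertex. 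This is exactly the case the paper handles (``if another face of $G_0[V(C_0)\cup\{v\}]$ includes an edge or vertex of $G_0$, or $k\ge 4$, then by considering $u_1$ and $u_k$ \dots''), and it is missing from your argument. Relatedly, the conclusion demands a \emph{regular} tripod; your aside that ``a tripod need not be regular for the statement to hold'' is wrong, though your later observation that $v(T_0)\ge 2$ forces some face to contain another internal vertex (hence regularity, after cyclically renumbering so that face is the one incident with $u_1$ and $u_k$) is the right repair.

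Your $k\ge4$ case also has a quantitative flaw: the two spokes $vu_i,vu_j$ chosen to define the split lie on the shared path $u_ivu_j$, hence on \emph{both} cycles $C_1,C_2$, so they do not count toward $|E(T_0\langle C_t\rangle)\setminus E(C_t)|$. For $k=4$ (and for $k=5$ with your ``spread out'' choice) the interior spokes split as $1{+}1$ or $2{+}0$, so ``the spokes alone suffice when $k\ge4$'' is false. The fix is the paper's: use $v(T_0)\ge 2$ to locate a face $f$ of $G_0[V(C_0)\cup\{v\}]$ containing another internal vertex, number the neighbors so $f$ is incident with $u_1$ and $u_k$, and split along $u_1vu_k$; then the side containing $f$ has an internal vertex and hence (by $2$-connectedness of $G_0$) at least two edges off its boundary cycle, while the other side contains the $k-2\ge 2$ interior spokes when $k\ge4$, or, when $k=3$, the extra nonempty face together with the spoke $vu_2$. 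Since $vu_1,vu_k\in E(G_0)$, the dividing vertex is true. So the decomposition you set up is essentially the paper's, but both the missing $k=3$ subcase and the spoke count need the argument organized around the guaranteed nonempty face rather than around the hypothesis you assumed.
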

\begin{proof}
Let $u_1,u_2, \ldots,u_k$  be all the neighbors of $v$ in $C_0$ listed in their order of appearance on $C_0$
and numbered such that the face $f$ of $G_0[V(C_0)\cup \{v\}]$  incident with $u_1$ and $u_k$
includes a vertex of $G_0$.
If another face of $G_0[V(C_0)\cup \{v\}]$ includes an edge or vertex of $G_0$, or $k\ge4$,
then by considering the vertices $u_1$ and $u_k$ we find that $v_0$ is a true dividing vertex of $T_0$.
Thus we may assume that $k=3$ and that $f$ is the only  face  of $G_0[V(C_0)\cup \{v\}]$ that includes a vertex 
or edge of $G_0$. It follows that $v$ is a tripod, as desired.
\end{proof}

\begin{definition}
Let $T_0=(G_0,C_0,L_0)$ be a canvas. 
 We say that $T_0$ is a \emph{$0$-relaxation} of $T_0$. Let $k>0$ be an integer,
$T_0'$ be a $(k-1)$-relaxation of $T_0$ and $v$ be a regular tripod of $T_0'$.
Then we say that $T_0'\xx{v}$ is a \emph{$k$-relaxation} of $T_0$. 
\end{definition}

Let us make a few remarks. 
If $T_0$ is a critical canvas, then every tripod  of $T_0$ is regular by Proposition~\ref{Facts}(2).
Therefore $T_0\xx{v}$ is well-defined; moreover, it is a critical canvas by Corollary~\ref{SubCycle}, and $v(T_0\xx{v})\ge2$
again by Proposition~\ref{Facts}(2).
It follows that for all $k\ge 1$, a $k$-relaxation of a critical canvas is well-defined, and if we denote it by $T_0'$, 
then $T_0'$ is critical and $v(T_0')\ge 2$.
Here are some useful claims about relaxations.

\begin{claim}\label{Relax1}
If $T_0'$ is a $k$-relaxation of a canvas $T_0$, then $d(T_0)\ge d(T_0') - k(2\alpha+\epsilon)$.
\end{claim}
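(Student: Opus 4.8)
The plan is to prove Claim~\ref{Relax1} by induction on $k$, peeling off one tripod at a time and tracking how each quantity ($\mathrm{def}$, $v$, $b$, $q$, hence $s$ and $d$) changes. The base case $k=0$ is trivial since a $0$-relaxation of $T_0$ is $T_0$ itself, so $d(T_0)\ge d(T_0) - 0$. For the inductive step, write $T_0'$ as $T_0''\xx{v}$ where $T_0''$ is a $(k-1)$-relaxation of $T_0$ and $v$ is a regular tripod of $T_0''$; by induction $d(T_0)\ge d(T_0'') - (k-1)(2\alpha+\epsilon)$, so it suffices to show the single-step inequality $d(T_0'')\ge d(T_0'')\xx{v} - (2\alpha+\epsilon)$, i.e. that passing from a canvas to its tripod-relaxation drops $d$ by at most $2\alpha+\epsilon$.

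So the heart of the matter is a one-step estimate. Let $v$ be a regular tripod of a canvas $H=(G_0,C_0,L_0)$ with neighbors $u_1,u_2,u_3$ on $C_0$ in standard order, and let $H\xx{v}=H\langle C_0\xx{v}\rangle$ where $C_0\xx{v}$ is the face of $G_0[V(C_0)\cup\{v\}]$ containing the rest of $G_0$. I would apply Proposition~\ref{surplussum} to $G_0$ with the subgraph $G':=G_0[V(C_0)\cup\{v\}]$ (or rather the $2$-connected plane graph consisting of $C_0$ together with the three edges $vu_1,vu_2,vu_3$), whose internal faces are $C_0\xx{v}$ together with two ``small'' faces that include no vertex or edge of $G_0$. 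The small faces $f$ have $G_0[f]=C_f$, so $v(G_0[f])=0$ and $d(G_0[f])=|E(G_0[f])\setminus E(C_f)|=0$. Proposition~\ref{surplussum} then gives $d(G_0)\ge d(G')+d(H\xx{v})$, so it remains to compute $d(G')$. Here $v(G')=1$, and $|E(G')\setminus E(C_0)|=3$, so $\mathrm{def}(G')=3-3\cdot 1=0$; meanwhile $s(G')=\epsilon\cdot 1+\alpha(b(G')+q(G'))$. Since $v$ is an internal vertex of $G'$ adjacent to $C_0$ it lies in $B(G')\subseteq Q(G')$, and $G'$ has no other internal vertices, so $b(G')=q(G')=1$ and $s(G')=\epsilon+2\alpha$. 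Thus $d(G')=0-(\epsilon+2\alpha)=-(2\alpha+\epsilon)$, and $d(G_0)\ge d(H\xx{v})-(2\alpha+\epsilon)$, which is exactly the single-step inequality.

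Combining, $d(T_0)\ge d(T_0'') - (k-1)(2\alpha+\epsilon)\ge d(T_0'')\xx{v} - (2\alpha+\epsilon) - (k-1)(2\alpha+\epsilon) = d(T_0') - k(2\alpha+\epsilon)$, completing the induction. The only subtlety I anticipate is making sure the chosen subgraph $G'$ in the single-step argument is genuinely $2$-connected (it is, being a cycle $C_0$ together with a vertex of degree at least three joined to $C_0$) and that its internal faces are precisely $C_0\xx{v}$ plus faces not meeting the rest of $G_0$ — but that is exactly the content of the definition of a regular tripod, since at most one face of $G_0[V(C_0)\cup\{v\}]$ includes an edge or vertex of $G_0$. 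One should also note that $T_0$ being a canvas need not be assumed: the statement is for arbitrary canvases, and $G_0[V(C_0)\cup\{v\}]$ is automatically a plane graph for which $d$ is defined, so the computation goes through verbatim.
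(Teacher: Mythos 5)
Your proof is correct and takes essentially the same route as the paper: the same induction on $k$, reduced to the one-step estimate that relaxing at a regular tripod decreases $d$ by at most $2\alpha+\epsilon$. The paper obtains that step by observing directly that the deficiency is unchanged while $v$ drops by one and $b,q$ increase by at most one each; your application of Proposition~\ref{surplussum} to $C_0$ together with the tripod and its three edges (giving $d(G')=-(2\alpha+\epsilon)$ and $d=0$ on the two small faces) is just a repackaging of that same accounting.
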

\begin{proof}
We proceed by induction on $k$. 
The claim clearly holds for $k=0$, and so we may assume that $k\ge1$ and that the claim holds for all integers
strictly is smaller than~$k$.
Let $T_{k-1}$ be a $(k-1)$-relaxation of $T_0$ and $v$ a regular tripod of $T_{k-1}$ such that $T_0'$ is a $1$-relaxation of $T_{k-1}$. By induction, $d(T_0)\ge d(T_{k-1}) - (k-1)(2\alpha+\epsilon)$. 
Yet ${\rm def}(T_{k-1})={\rm def}(T_0')$ while $v(T_{k-1})=v(T_0')+1$, 
$b(T_{k-1})\le b(T_0')+1$ and $q(T_{k-1})\le q(T_0')+1$. Thus $d(T_{k-1})\ge d(T_0') -(2\alpha+\epsilon)$ and the claim follows.
\end{proof}

\begin{claim}\label{Relax2}
Let $k\in\{0,1,2\}$ and let $T'$ be a $k$-relaxation of $T$.
Then $T'$ does not have a true  dividing vertex, and if $k\le1$, then it does not have a strong dividing vertex.
\end{claim}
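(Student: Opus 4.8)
The plan is to argue by contradiction using the minimality of $T$ together with Claims~\ref{DividingTrue} and~\ref{DividingStrong}, exactly as the auxiliary bounds on relaxations were designed to be used. First I would record the key structural facts about a $k$-relaxation $T'$ of $T$ for $k\in\{0,1,2\}$: it is a critical canvas with $v(T')\ge 2$ (by Proposition~\ref{Facts}(2) and the remarks following the definition of relaxation), and it satisfies $|E(T')|\le|E(G)|$ since a relaxation is obtained by passing to a subcanvas $T_0'\langle C_0\xx{v}\rangle$, which only deletes edges and vertices. Hence $T'$ is eligible input to Claims~\ref{ProperCrit}, \ref{DividingTrue}, and \ref{DividingStrong}.

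Next, suppose for contradiction that $T'$ has a true dividing vertex. Then Claim~\ref{DividingTrue}(1) gives $d(T')\ge 3-2(2\alpha+\epsilon)$. Feeding this back through Claim~\ref{Relax1}, $d(T)\ge d(T')-k(2\alpha+\epsilon)\ge 3-(k+2)(2\alpha+\epsilon)\ge 3-4(2\alpha+\epsilon)$ when $k\le 2$. The goal is to show this already exceeds $3-\gamma$, i.e. that $4(2\alpha+\epsilon)\le\gamma$ — but that is false in general; the correct route is to use the sharper part of Claim~\ref{DividingTrue}. Since $v(T')\ge 2$, I would instead split on whether $v(T')\ge 3$: if so, Claim~\ref{DividingTrue}(2) gives $d(T')\ge 4-2\alpha-\epsilon-\gamma$, and then $d(T)\ge 4-(k+1)(2\alpha+\epsilon)-\gamma\ge 4-3(2\alpha+\epsilon)-\gamma\ge 3-\gamma$ using (I2) (which gives $3(2\alpha+\epsilon)\le\gamma\le 1$), contradicting that $T$ is a counterexample. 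If $v(T')=2$, then any true dividing vertex would be one of exactly two internal vertices, forcing (via the definition) both $T'\langle C_1\rangle$ and $T'\langle C_2\rangle$ to be essentially trivial canvases, and I would check directly from Proposition~\ref{d0} that $d(T')$ is large enough; alternatively, a true dividing vertex with $v(T')=2$ makes $T'$ fall under the already-excluded small cases. The strong-dividing case for $k\le 1$ is handled identically but more cleanly: Claim~\ref{DividingStrong} gives $d(T')\ge 4-2\gamma$, whence $d(T)\ge 4-2\gamma-(2\alpha+\epsilon)\ge 3-\gamma$ provided $2\alpha+\epsilon+1-\gamma\ge 0$, which follows from (I3); again this contradicts $T$ being a counterexample.

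I would organize the write-up as: (a) establish the eligibility facts about $T'$; (b) assume a true dividing vertex exists, apply Claim~\ref{DividingTrue} in the appropriate form (distinguishing $v(T')\ge 3$ from the degenerate $v(T')=2$ situation), combine with Claim~\ref{Relax1}, and invoke inequalities (I2), (I3) to reach $d(T)\ge 3-\gamma$, the sought contradiction; (c) for $k\le 1$, assume a strong dividing vertex exists, apply Claim~\ref{DividingStrong} and Claim~\ref{Relax1}, and again use (I3) to contradict minimality.

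The main obstacle I anticipate is the bookkeeping in the $k=2$ case combined with $v(T')$ small: one must be sure that applying Claim~\ref{DividingTrue} to $T'$ (rather than to $T$ itself) is legitimate — which it is, since $T'$ is critical with $v(T')\ge 2$ and $|E(T')|\le|E(G)|$ — and then that the loss of up to $2(2\alpha+\epsilon)$ from two relaxation steps, plus one more $(2\alpha+\epsilon)$ internal to Claim~\ref{DividingTrue}, is absorbed by the slack between $4-\gamma$ and $3-\gamma$, i.e. by $3(2\alpha+\epsilon)\le\gamma$ from (I2). Getting the inequality arithmetic to line up in every sub-case, and making sure the genuinely degenerate configuration ($v(T')=2$ with a true dividing vertex) is either impossible or trivially handled, is where care is needed; everything else is a routine chaining of the already-proved claims.
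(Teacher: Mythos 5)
Your main line is exactly the paper's: assume a true (resp.\ strong) dividing vertex in $T'$, apply Claim~\ref{DividingTrue}(2) (resp.\ Claim~\ref{DividingStrong}), push the bound back through Claim~\ref{Relax1}, and contradict the minimality of $T$ via (I2) and (I3); for $v(T')\ge 3$ your arithmetic agrees with the paper's. The genuine gap is your treatment of the case $v(T')=2$. The paper disposes of it in one stroke: each relaxation step turns exactly one internal vertex (the tripod) into a boundary vertex, so $v(T')=v(T)-k\ge 5-2=3$ by Claim~\ref{Base}, and the degenerate case simply never occurs. You only secure $v(T')\ge 2$ and then offer two fallbacks, neither of which closes. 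The ``check directly from Proposition~\ref{d0}'' route cannot work: with $v(T')=2$ the best you get is the Claim~\ref{DividingTrue}(1)-type bound $d(T')\ge 3-2(2\alpha+\epsilon)$, and after losing up to $k(2\alpha+\epsilon)$ through Claim~\ref{Relax1} you would need $4(2\alpha+\epsilon)\le\gamma$, which is not implied by (I1)--(I3) and in fact fails for the constants eventually used ($\epsilon=1/18$, $\alpha=1/12$, $\gamma=2/3$ give $4(2\alpha+\epsilon)=8/9>2/3$). The ``falls under the already-excluded small cases'' route, as stated, misapplies Claim~\ref{Base}, which excludes small $v(T)$, not small $v(T')$; it becomes correct only once you add the missing observation that $v(T')=v(T)-k$, which is precisely the point you never make.

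Two smaller remarks: the eligibility facts you list (criticality of $T'$, $|E(T')|\le|E(G)|$) are fine and match the remarks following the definition of relaxation; and in the strong-dividing computation the condition you need is $2\alpha+\epsilon+\gamma\le 1$ (from $4-2\gamma-(2\alpha+\epsilon)\ge 3-\gamma$), which indeed follows from (I3) --- the inequality you wrote, $2\alpha+\epsilon+1-\gamma\ge 0$, is not the needed one, though your cited conclusion is the right one.
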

\begin{proof}
Suppose for a contradiction that $T'$ has a  true or strong dividing vertex. By Claim~\ref{SubCycle}, $T'$ is critical,
and $v(T')\ge3$ by Claim~\ref{Base}. If $T'$ has a  true dividing vertex, then
$$d(T)\ge d(T')-2(2\alpha+\epsilon)\ge 4-(6\alpha+\gamma+3\epsilon)\ge3-\gamma$$
 by  Claim~\ref{Relax1}, Claim~\ref{DividingTrue}(2)  and inequalities (I2) and (I3).
If $k\le1$ and $T'$ has a strong dividing vertex, then
$$d(T)\ge d(T')-(2\alpha+\epsilon)\ge 4-(2\alpha+2\gamma+\epsilon)\ge3-\gamma,$$
using Claim~\ref{DividingStrong} instead of Claim~\ref{DividingTrue}(2), as desired.
%
%
\end{proof}


\begin{claim}\label{QuasiSame}
If $x_1$ is a tripod of $T$, then letting $T'=T\xx{x_1}$, either
\begin{enumerate}
\item[{\rm(1)}] $\deg(x_1)=5$, or
\item[{\rm(2)}]  $\deg(x_1)=6$, the neighbors of $x_1$ not in $C$ form a path of length two and the ends of that path are in $B(T)$, $b(T)=b(T')$, $q(T)=q(T')$ and $d(T)\ge d(T')-\epsilon$.
\end{enumerate}
\end{claim}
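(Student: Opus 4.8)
The statement analyzes a tripod $x_1$ of $T$ and the relaxation $T' = T\xx{x_1}$, and claims that either $x_1$ has degree $5$, or it has degree $6$ with a very restricted local structure. The plan is to start from the structural facts we already know: $x_1$ is an internal vertex, so $\deg(x_1) \ge 5$ by Proposition~\ref{Facts}(2); $x_1$ has exactly three neighbors $u_1, u_2, u_3$ on $C$ (listed in standard order), since it is a tripod; and $T' = T\xx{x_1}$ is a critical canvas with $v(T') \ge 2$ by the remarks following the definition of relaxation. Write $d := \deg(x_1) - 3 \ge 2$ for the number of neighbors of $x_1$ not on $C$; call this set $P$. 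Since at most the face $C\xx{x_1}$ of $G[V(C)\cup\{x_1\}]$ contains vertices or edges of $G$, all vertices of $P$ lie in $G\xx{x_1}$, and in fact $P \subseteq V(C\xx{x_1})$ — they are all incident with the face of $G[V(C)\cup\{x_1\}]$ bounded by $C\xx{x_1}$, so they appear consecutively on the path from $u_1$ to $u_3$ in $C\xx{x_1}$ not through $x_1$.

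The next step is to relate the quantities $v, b, q$ of $T$ and $T'$. We have $v(T) = v(T') + 1$ always (we remove exactly $x_1$). For the boundary and quasi-boundary: passing from $T$ to $T'$ we delete $x_1$ and the vertices/edges of $C$ that get cut off, and the new outer cycle is $C\xx{x_1}$, whose non-$C$ part is the path $u_1 p_1 p_2 \cdots p_d u_3$ through the vertices of $P$. The key observation is that each $p_i \in P$ is a neighbor of $x_1$, hence was in $B(T)$, and now lies on the outer cycle of $T'$, so it is no longer counted in $b(T')$ or $q(T')$. Conversely, vertices that were internal in $T$ and remain internal in $T'$ either keep their membership in $B(T')$/$Q(T')$ or could only gain it (a vertex cofacial with $x_1$ in $T$ might now be cofacial with a $C$-vertex of $T'$, etc. — but we want an upper bound on how much $b, q$ can \emph{drop}, so this direction is harmless). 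So the decrease $b(T) - b(T')$ is essentially the number of vertices that were in $B(T)$ but are now either deleted or absorbed into $C\xx{x_1}$; similarly for $q$. Since ${\rm def}(T) = {\rm def}(T\xx{x_1})$ (removing a tripod together with its three $C$-edges and the cut-off part of $C$ does not change the count $|E \setminus E(C)| - 3v$ — we lose $3$ edges incident to $x_1$ going to $C$... this needs to be checked carefully, but it is the standard bookkeeping behind relaxations and is implicit in Claim~\ref{Relax1}), we get $d(T) - d(T') = s(T') - s(T) = \epsilon(v(T') - v(T)) + \alpha\bigl((b(T')-b(T)) + (q(T')-q(T))\bigr) = -\epsilon + \alpha(\text{nonpositive})$, so $d(T) \le d(T') - \epsilon$ in general; we need the refinement $d(T) \ge d(T') - \epsilon$ in case (2), which forces $b(T)=b(T')$ and $q(T)=q(T')$, i.e. no absorption happens.

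Now the dichotomy. Suppose $\deg(x_1) \ge 6$, i.e. $d \ge 3$. I would argue that $d = 3$ exactly and that $P = \{p_1,p_2,p_3\}$ forms a path $p_1 p_2 p_3$ (consecutive on $C\xx{x_1}$), with $p_1, p_3 \in B(T)$. The path structure: $p_1, \ldots, p_d$ are consecutive on $C\xx{x_1}$, so $x_1 p_1, x_1 p_2, \ldots, x_1 p_d$ plus the sub-path $p_1 p_2 \cdots p_d$ of $C\xx{x_1}$ would create triangles $x_1 p_i p_{i+1}$ unless $p_i p_{i+1} \notin E(G)$ — but wait, $p_i p_{i+1}$ \emph{is} an edge of $C\xx{x_1}$, and by Proposition~\ref{Facts}(1) any triangle $C'$ has $V(G\langle C'\rangle) = V(C')$, which is fine. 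The real constraint: if $d \ge 4$, I expect $x_1$ to be a \emph{quadpod-like} obstruction or to contradict minimality via Claim~\ref{Relax2} (no strong dividing vertex in low relaxations) — more precisely, with $d$ large and $x_1$ of degree $\ge 7$, the deficiency savings become too large and contradict $T$ being a counterexample. I would also need $p_1$ and $p_d$ to lie in $B(T)$: $p_1$ is incident with the face of $G[V(C)\cup\{x_1\}]$ on the $u_1$ side, and for the relaxation to be "clean" (no absorption, $b(T)=b(T')$), $p_1$ must already have a neighbor on $C$ — this is where the case analysis on whether $p_1, p_d$ are adjacent to $C$ comes in, and the alternative (they're not) should either force $\deg(x_1)=5$ after re-examining, or contradict minimality.

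\textbf{Main obstacle.} The hard part will be the careful bookkeeping showing $b(T) \ge b(T') $ and $q(T) \ge q(T')$ with the precise conditions for equality, and then squeezing out from $d(T) \ge d(T') - \epsilon$ (which must hold in case (2), contrary to the generic $d(T) \le d(T') - \epsilon$) that there is no "absorption" and the local picture is exactly a degree-$6$ vertex with its two non-$C$, non-path... rather, its non-$C$ neighbors forming a $2$-path with endpoints on $B(T)$. Ruling out $\deg(x_1) \ge 7$ cleanly — presumably via a deficiency-counting contradiction with minimality of $T$, or by exhibiting a true/strong dividing vertex and invoking Claim~\ref{DividingTrue}/Claim~\ref{DividingStrong} through Claim~\ref{Relax2} — is the part I expect to require the most care, since it is exactly the kind of configuration-elimination argument that the whole section is built around.
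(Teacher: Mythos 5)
Your structural picture of the relaxation $T'=T\xx{x_1}$ is wrong, and this reverses the whole bookkeeping. By definition $C\xx{x_1}$ is the boundary of a face of $G[V(C)\cup\{x_1\}]$, a graph containing only $C$, $x_1$ and the edges between them; so $C\xx{x_1}$ consists of $x_1$, the two edges $x_1u_1,x_1u_3$, and a subpath of $C$. In particular $x_1$ is \emph{not} deleted --- it becomes a vertex of the new outer cycle --- and the non-$C$ neighbors of $x_1$ do \emph{not} lie on $C\xx{x_1}$: they remain internal vertices of $T'$ and, being adjacent to the outer-cycle vertex $x_1$, they all enter $B(T')$. Thus your claims that ``the new outer cycle is \dots the path $u_1p_1\cdots p_du_3$ through the vertices of $P$'' and that these vertices ``are no longer counted in $b(T')$ or $q(T')$'' are backwards, as is your conclusion $d(T)\le d(T')-\epsilon$. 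Since $B(T)\setminus\{x_1\}\subseteq B(T')$ and $Q(T)\setminus\{x_1\}\subseteq Q(T')$, the correct identity $d(T)=d(T')-\epsilon+\alpha\bigl(b(T')-b(T)+q(T')-q(T)\bigr)$ has a nonnegative $\alpha$-contribution, and it is exactly this surplus that drives the paper's proof.

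The central argument you are missing is the following. List the neighbors of $x_1$ cyclically as $c_1,c_2,c_3,q_1,\dots,q_2$ and set $R=N(x_1)\setminus\{c_1,c_2,c_3,q_1,q_2\}$ (so $|R|=\deg(x_1)-5\ge1$ once $\deg(x_1)\ge6$). If some $q\in R$ belonged to $Q(T)$, then in $T'$ it would be cofacial both with $x_1\in V(C\xx{x_1})$ and with a vertex of $C$ on $C\xx{x_1}$, and the flanking internal vertices $q_1,q_2$ on the two sides make it a \emph{strong} dividing vertex of the $1$-relaxation $T'$, contradicting Claim~\ref{Relax2}. Hence $R\cap Q(T)=\emptyset$ while $R\subseteq B(T')\subseteq Q(T')$, giving $b(T')\ge b(T)+|R|-1$ and $q(T')\ge q(T)+|R|-1$. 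If $\deg(x_1)\ge7$, i.e.\ $|R|\ge2$, then $d(T)\ge d(T')-\epsilon+2\alpha\ge d(T')\ge 3-\gamma$ by (I1) and minimality, contradicting that $T$ is a counterexample; so $\deg(x_1)=6$, and the same counting with $|R|=1$ forces $b(T)=b(T')$, $q(T)=q(T')$, $d(T)\ge d(T')-\epsilon$, and then $B(T')\setminus\{q\}=B(T)\setminus\{x_1\}$ and $Q(T')\setminus\{q\}=Q(T)\setminus\{x_1\}$ yield $q_1,q_2\in B(T)$ and that $q_1qq_2$ is a path (otherwise another vertex cofacial with $x_1$ would again be a strong dividing vertex). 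Your sketch does gesture at Claims~\ref{Relax2} and~\ref{DividingStrong} and expects a counting contradiction for degree at least~$7$, but the mechanism you propose --- ``deficiency savings become too large'' --- cannot work, because ${\rm def}(T)={\rm def}(T')$ exactly; the contradiction lives entirely in the $\alpha(b+q)$ terms, which your reversed picture of the relaxation makes unavailable. As written, the proposal does not establish any of the assertions of case (2).
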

\begin{proof}
Note that as $v(T)\ge 4$ by Claim~\ref{Base}, then $v(T')\ge 3$. As $G$ is $C$-critical, $\deg(x_1)\ge 5$. If $\deg(x_1)=5$, then (1) holds, as desired. So we may assume that $\deg(x_1)\ge 6$. By the minimality of $T$, $d(T')\ge 3-\gamma$. Moreover, ${\rm def}(T)={\rm def}(T')$ and $v(T)=v(T')+1$. Thus $d(T) = d(T') - \epsilon + \alpha(b(T')-b(T)+q(T')-q(T))$. 
Let $c_1,c_2,c_3$ be the neighbors of $x_1$ in $C$ listed in standard order, and let
$c_1,c_2,c_3,q_1, \ldots, q_2$ be all the neighbors of $x_1$ listed in their cyclic order around $x_1$.

Let $R = N(x_1)\setminus \{c_1,c_2,c_3,q_1,q_2\}$. We claim that $R \cap Q(T) = \emptyset$. 
Suppose not, and let $q\in R\cap Q(T)$. Then $q$ is a dividing vertex of $T'$. Given the presence of $q_1$ and $q_2$, $q$ is a strong dividing vertex of $T'$, contrary to Claim~\ref{Relax2}.
This proves  that $R \cap Q(T) = \emptyset$ and implies that $R\cap B(T)=\emptyset$ as well.

Note that $R\subseteq B(T')\subseteq Q(T')$. Thus $q(T') \ge q(T) + |R|-1$ and $b(T')\ge b(T) + |R|-1$. Hence if $|R|\ge 2$, then $d(T) \ge d(T')-\epsilon + 2\alpha \ge d(T')\ge 3-\gamma$ since $2\alpha\ge \epsilon$ by inequality (I1), a contradiction. So $|R|=1$ and $\deg(x_1)=6$. Thus $q(T')\ge q(T)$ and $b(T')\ge b(T)$. Now it follows that $q(T)=q(T')$ and $b(T)=b(T')$ as otherwise $d(T)\ge d(T') - \epsilon + \alpha \ge 3-\gamma$ since $\alpha \ge \epsilon$ by inequality (I1), a contradiction. Hence $d(T)\ge d(T')-\epsilon$.

Let $q\in R$. The conclusions above imply that $Q(T')\setminus \{q\} = Q(T)\setminus \{x_1\}$ and $B(T')\setminus \{q\} = B(T)\setminus \{x_1\}$. The latter implies that $q_1,q_2 \in B(T)$. The former implies that $q_1qq_2$ form a path, for otherwise there would exist a vertex other than $q,q_1,q_2$ that is cofacial with $x_1$ and therefore belongs to $Q(T')$; yet that vertex must then also belong to $Q(T)$ and so be a strong dividing vertex of $T'$, a contradiction as above. Thus (2) holds as desired.
\end{proof}

\begin{claim}\label{Relax4}
For $k\in\{0,1,2, 3\}$, if $T'$ is a $k$-relaxation of $T$, then there does not exist a proper critical subcanvas of $T'$.
\end{claim}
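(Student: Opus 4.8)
The plan is to induct on $k$. For $k=0$ the statement $T'=T$ has no proper critical subcanvas is exactly Claim~\ref{NoProperCrit}, so the base case is done. For the inductive step, suppose $k\in\{1,2,3\}$, let $T'$ be a $k$-relaxation of $T$, and write $T'=T''\xx{v}$ where $T''$ is a $(k-1)$-relaxation of $T$ and $v$ is a regular tripod of $T''$. By the remarks following the definition of relaxation, $T'$ and $T''$ are critical canvases with at least two internal vertices. Suppose for contradiction that $T'$ has a proper critical subcanvas $(G',C',L')$ — actually, since $T'\xx{v}$ shares the same outer cycle only after relaxing, the relevant subgraph $G'$ is a proper subgraph of the underlying graph of $T'$ containing its outer cycle, with $(G',C'_{T'},L')$ critical for some list assignment. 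The strategy is: first apply Claim~\ref{ProperCrit} to $T_0=T'$ to get a good lower bound on $d(T')$, namely $d(T')\ge 4-\gamma$; then use Claim~\ref{Relax1} to transfer this bound back to $T$, obtaining $d(T)\ge d(T')-k(2\alpha+\epsilon)\ge 4-\gamma-3(2\alpha+\epsilon)$; finally observe that $4-\gamma-3(2\alpha+\epsilon)\ge 3-\gamma$ whenever $6\alpha+3\epsilon\le 1$, which follows from inequalities (I2) and (I3) (since $6\alpha+3\epsilon\le\gamma\le 1-2\alpha-3\epsilon<1$). This contradicts $T$ being a counterexample to Theorem~\ref{StrongLinear}, completing the induction.

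To make this work I need $|E(G')|\le|E(G)|$ so that Claim~\ref{ProperCrit} applies to $T_0=T'$: this holds because a $k$-relaxation is obtained from $T$ by repeatedly passing to subgraphs $G\xx{v}\subseteq G$, so the underlying graph of $T'$ has at most $|E(G)|$ edges, and $G'$ is a proper subgraph of it. I also need $v(T')\ge 2$, which is part of the remarks after the relaxation definition. With these in hand, Claim~\ref{ProperCrit}(1) gives $d(T')\ge 4-\gamma$ directly, and Claim~\ref{Relax1} does the rest. The arithmetic $4-\gamma-k(2\alpha+\epsilon)\ge 3-\gamma$ reduces to $k(2\alpha+\epsilon)\le 1$; since $k\le 3$ it suffices that $3(2\alpha+\epsilon)=6\alpha+3\epsilon\le 1$, and $6\alpha+3\epsilon\le\gamma\le 1$ by (I2) and (I3).

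The main obstacle I anticipate is bookkeeping about exactly which graph plays the role of "$G_0$" and "$G'$" in the application of Claim~\ref{ProperCrit}: I must be careful that the outer cycle of $T'$ (which is $C\xx{X}$ for the set $X$ of tripods used in the relaxation, not $C$ itself) is the cycle $C_0$ in the statement of Claim~\ref{ProperCrit}, and that the hypothesized proper critical subcanvas of $T'$ genuinely supplies a proper subgraph $G'$ containing that outer cycle together with a list assignment $L'$ making $(G',C_0,L')$ critical. Once the correspondence is set up correctly, everything else is a short chain of inequalities already packaged in Claims~\ref{ProperCrit} and~\ref{Relax1}; there is no new combinatorial content, only the inductive reduction and the constant-chasing via (I1)--(I3).
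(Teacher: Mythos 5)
Your proposal is correct and follows essentially the same route as the paper: apply Claim~\ref{ProperCrit}(1) to $T_0=T'$ (which is a critical canvas with $v(T')\ge 2$ and at most $|E(G)|$ edges) to get $d(T')\ge 4-\gamma$, transfer via Claim~\ref{Relax1} to $d(T)\ge 4-\gamma-3(2\alpha+\epsilon)\ge 3-\gamma$ using $6\alpha+3\epsilon\le 1$ from (I2) and (I3), contradicting the choice of $T$. The inductive framing is unnecessary (the paper argues directly for all $k\le 3$), but it does no harm.
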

\begin{proof}
Suppose not. By Claim~\ref{ProperCrit}, $d(T')\ge 4-\gamma$. By Claim~\ref{Relax1}, $d(T)\ge 4-\gamma -3(2\alpha+\epsilon)$, which is at least $3-\gamma$ as $6\alpha+3\epsilon \le 1$ by inequalities (I2) and (I3), a contradiction.
\end{proof}
 
%

Let $X_1$ be the set of all vertices $v\in V(G)\setminus V(C)$ with
at least three neighbors in $C$.

\begin{claim}\label{X1}
$X_1\ne \emptyset$ and every member of $X_1$ is a tripod of $T$.
\end{claim}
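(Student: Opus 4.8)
\textbf{Proof proposal for Claim~\ref{X1}.}

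The plan is to prove both assertions using the Cycle Chord or Tripod Theorem (Theorem~\ref{CycleChordTripod}) together with the structural restrictions already imposed on the minimum counterexample $T$. First I would dispense with the second assertion: suppose $v\in X_1$ is not a tripod of $T$. Since $v(T)\ge 5\ge 2$ by Claim~\ref{Base}, Claim~\ref{TripodOrDividing} applies to $v$, and it says that $v$ is either a regular tripod of $T$ or a true dividing vertex of $T$. But $T=T'$ is a $0$-relaxation of itself, so by Claim~\ref{Relax2} with $k=0$ the canvas $T$ has no true dividing vertex. Hence $v$ is a regular tripod, which contradicts the assumption; so every member of $X_1$ is a tripod.

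Next I would show $X_1\ne\emptyset$. Apply Theorem~\ref{CycleChordTripod} to the critical canvas $T$. Case (1) of that theorem, namely that $C$ has a chord in $G$, is excluded by Claim~\ref{Chord}. Therefore case (2) holds: there is a vertex $v\in V(G)\setminus V(C)$ with at least three neighbors on $C$ such that at most one of the faces of $G[\{v\}\cup V(C)]$ includes a vertex or edge of $G$. In particular $v\in X_1$, so $X_1\ne\emptyset$. (Strictly, case (2) already gives a vertex that is a tripod or quadpod, but the cleaner packaging is simply to observe it lies in $X_1$ and then invoke the second assertion.)

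I do not expect a genuine obstacle here; the claim is essentially a bookkeeping corollary of the structure theorem and the two facts (no chord, no true dividing vertex) established just above. The one point requiring a little care is making sure the hypotheses of the auxiliary claims are met: that $v(T)\ge 2$ (so Claim~\ref{TripodOrDividing} and the ``minimum counterexample'' machinery apply), which is Claim~\ref{Base}, and that $T$ counts as a $0$-relaxation of $T$ so Claim~\ref{Relax2} is applicable with $k=0$. With those in place the argument is immediate.
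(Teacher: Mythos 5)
Your proof is correct and follows the same route as the paper: $X_1\ne\emptyset$ from Theorem~\ref{CycleChordTripod} once Claim~\ref{Chord} rules out chords, and the tripod conclusion from Claim~\ref{TripodOrDividing} combined with Claim~\ref{Relax2} (applied with $k=0$), with Claim~\ref{Base} guaranteeing $v(T)\ge 2$. The only difference is that you spell out the hypothesis-checking that the paper leaves implicit.
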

\begin{proof}
By Claim~\ref{Chord}, there does not exist a chord of $C$, and hence $X_1\ne\emptyset$ by 
Theorem~\ref{CycleChordTripod}.
By Claims~\ref{TripodOrDividing} and~\ref{Relax2} every member of $X_1$ is a tripod of $T$.
\end{proof}

%
%

\begin{claim}\label{TX1welldef}
$T\xx{X_1}$ is well-defined and is a critical canvas. 
\end{claim}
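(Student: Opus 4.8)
The plan is to show that the set $X_1$ of all internal vertices with at least three neighbors on $C$ is a set of tripods whose ``fans'' interact nicely enough that one can iteratively peel them off while staying inside a well-defined closed disk. By Claim~\ref{X1} every member of $X_1$ is a regular tripod of $T$. First I would argue that the fans $G[V(C)\cup\{v\}]$ for distinct $v\in X_1$ overlap only along $C$: if two tripods $v,v'$ had a common neighbor on $C$, or if the fan of one were not contained in a single face of the fan of the other, then one of them would witness a true dividing vertex of $T$ (by considering two boundary vertices lying in two distinct faces), contradicting Claim~\ref{Relax2} with $k=0$. Hence the subgraph $G[V(C)\cup X_1]$ minus the edges inside $X_1$ is planar with all of $C\cup X_1$ on its outer structure, and exactly one of its faces can include a vertex or edge of $G$ — this is precisely the argument already used in the proof of Theorem~\ref{CycleChordTripod}, where the auxiliary tree $H$ on $X_1\cup\F$ was introduced.

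The key steps, in order: (i) invoke Claim~\ref{X1} to know $X_1\neq\emptyset$ and consists of tripods; (ii) show pairwise compatibility of the tripods' fans, using Claims~\ref{TripodOrDividing} and~\ref{Relax2} (no true dividing vertices) to rule out the bad configurations; (iii) conclude that $G_0[V(C)\cup X_1]$ has a unique face $f^\star$ meeting the rest of $G$, so that $C\xx{X_1}$ is well-defined and $G\xx{X_1}:=G\langle C\xx{X_1}\rangle$ makes sense; (iv) observe that $C\xx{X_1}$ is an actual cycle of the plane graph obtained from $G$ by adding the (at most $|X_1|$, possibly already present) tripod edges inside faces of $G$, so that $T\xx{X_1}=T[G\xx{X_1}]$ falls under the definition of $T\langle C'\rangle$; (v) check $G\xx{X_1}\neq C\xx{X_1}$ — this holds because $T$ is critical so $G\neq C$, and by step (iii) everything outside $C\cup X_1$ lies in the disk bounded by $C\xx{X_1}$, forcing that disk to contain a vertex or edge of $G$; (vi) apply Corollary~\ref{SubCycle} (after passing through the plane graph with the tripod edges added, exactly as in the Definition preceding that corollary) to deduce $T\xx{X_1}$ is a critical canvas.

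I expect the main obstacle to be step (iii)–(iv): making precise that ``adding the tripod edges for all $v\in X_1$ simultaneously'' yields a legitimate plane graph $G'$ with $C\xx{X_1}$ a genuine cycle of $G'$, and that the pieces fit together so there is exactly one relevant face. The subtlety is that a single tripod $v$ contributes the cycle $C\xx{v}$, but when several tripods are present their fans must be nested or side-by-side around $C$ in a way compatible with a common outer cycle; the compatibility established in step (ii) is exactly what guarantees this, but one must be careful that no two tripods ``cross'' (which would again produce a dividing vertex) and that the union of the non-$f^\star$ faces of all the individual fans coincides with the region between $C$ and $C\xx{X_1}$. Once the topology is pinned down, the criticality is immediate from Corollary~\ref{SubCycle}, and $v(T\xx{X_1})\ge v(T)-|X_1|\ge 2$ need not even be verified here since it is not part of the statement — though it follows from Claim~\ref{Base} together with the fact that each tripod removes exactly one internal vertex. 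I would keep the write-up short by citing the tree argument from the proof of Theorem~\ref{CycleChordTripod} rather than reproducing it.
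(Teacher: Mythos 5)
Your endgame coincides with the paper's, whose entire proof is two sentences: every tripod of $G$ is regular by Proposition~\ref{Facts}(2), hence $T\xx{X_1}$ is well-defined, and it is critical by Corollary~\ref{SubCycle}. (Note also that no edges need to be added at all: a tripod's three edges to $C$ are edges of $G$, so $C\xx{X_1}$ is a cycle of $G$ itself and Corollary~\ref{SubCycle} applies directly.) However, two of the sub-steps you interpose are wrong as stated. Two tripods sharing a neighbour on $C$ is not a bad configuration and produces no true dividing vertex: for a tripod $v$ with neighbours $u_1,u_2,u_3$ on $C$, the only $C$-vertices joined to $v$ by edges of $G$ are $u_1,u_2,u_3$, and for any pair among them one of the two resulting cycles has at most the single edge $vu_2$ in its interior, so the requirement $|E(T\langle C_i\rangle)\setminus E(C_i)|\ge 2$ in the definition of dividing vertex fails. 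What you actually need --- that no tripod has a $C$-neighbour strictly inside another tripod's short arcs, i.e.\ fans do not cross --- is automatic from planarity, because the two short faces of a fan contain no vertex or edge of $G$ and are therefore faces of $G$. Likewise, the tree argument in the proof of Theorem~\ref{CycleChordTripod} does not give uniqueness of the face $f^\star$; it only shows that faces incident with at most one member of $X$ are empty, and in that generality several faces incident with two or more members may be non-empty.

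The genuine subtlety your write-up misses is an edge of $G$ joining two members of $X_1$: such an edge lies in the induced graph $G[V(C)\cup X_1]$ and would split the unique candidate face into two, and its absence is also what rescues your step (v) --- from $G\ne C$ alone you cannot conclude that ``everything outside $C\cup X_1$'' is non-empty, since $G=G[V(C)\cup X_1]$ is not excluded, in which case no face would contain a vertex or edge of $G$ and $C\xx{X_1}$ would not even exist. Both points are settled by showing that members of $X_1$ are pairwise non-adjacent: if $v,v'\in X_1$ were adjacent, then in the $1$-relaxation $T\xx{v}$ the vertex $v'$ would have four neighbours on the outer cycle, hence by Claim~\ref{TripodOrDividing} it would be a true dividing vertex of $T\xx{v}$, contradicting Claim~\ref{Relax2}; this is the same move the paper makes in the proof of Claim~\ref{X2}. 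Non-adjacency together with degree at least five then yields an internal vertex outside $X_1$, so exactly one face of $G[V(C)\cup X_1]$ meets the rest of $G$, and Corollary~\ref{SubCycle} finishes exactly as you and the paper both say.
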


\begin{proof}
By Proposition~\ref{Facts}(2) every tripod of $G$ is  regular, and hence $T\xx{X_1}$ is well-defined.
It is critical by Corollary~\ref{SubCycle}.
\end{proof}

\begin{claim}\label{X1Chord}
The graph $G\xx{X_1}$ does not have a chord of $C\xx{X_1}$.
\end{claim}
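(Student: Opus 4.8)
The plan is to argue by contradiction: suppose $C\xx{X_1}$ has a chord $e$ in $G\xx{X_1}$, and derive a configuration that contradicts one of the earlier claims, most likely Claim~\ref{Relax4} (no proper critical subcanvas of a low-order relaxation of $T$) or Claim~\ref{Relax2} (no true/strong dividing vertex in such a relaxation). First I would set up notation: write $C' := C\xx{X_1}$, and recall that $C'$ is the boundary of the unique face of $G[V(C)\cup X_1]$ that includes a vertex or edge of $G$, so $V(C')$ consists of a sub-path (or sub-paths) of $C$ together with the tripod vertices of $X_1$ lying on that face, joined in the cyclic order dictated by planarity. The edges of $C'$ are either edges of $C$, or edges joining a tripod to one of its neighbors on $C$. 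A chord $e$ of $C'$ therefore has both ends in $V(C')\subseteq V(C)\cup X_1$, and since $C$ itself has no chord (Claim~\ref{Chord}) and $G$ has no triangle $C''$ with $G\langle C''\rangle\ne C''$ (Proposition~\ref{Facts}(1)), the endpoints of $e$ cannot both lie on the $C$-part in a way that would short-cut $C$; so at least one endpoint of $e$ is a tripod vertex $x\in X_1$.

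Next I would analyze the cycle $C' + e$. Its two cycles other than $C'$, call them $D_1$ and $D_2$, together with the tripods' attachment paths back to $C$, carve $G$ into regions. The key point is that passing from $T$ to $T\xx{X_1}$ amounts to repeatedly taking $\oplus v$ over tripods $v\in X_1$, i.e. $T\xx{X_1}$ is a $|X_1|$-relaxation of $T$ — or, more carefully, a relaxation obtained by successively deleting tripods. But we only have control over $k$-relaxations for $k\le 3$, so I cannot directly invoke Claim~\ref{Relax4} with $T\xx{X_1}$ in place of $T'$; instead I would work one tripod at a time. Concretely: the chord $e$ lives inside $G\xx{X_1}$, and by Corollary~\ref{SubCycle} the canvas $T'\langle C'' \rangle$ for either piece $C''$ of $C'+e$ is critical whenever it is not just the cycle $C''$. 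Since $e\notin E(C')$ and at least one end of $e$ is a tripod $x$, the edge $e$ together with the two edges of $C'$ at $x$ exhibit $x$ as a vertex with (at least) two neighbors on $C'$ and edges on both sides — I would check that this makes $x$ a true or strong dividing vertex of $T\xx{X_1}$, or produces a proper critical subcanvas of $T\xx{X_1}$.

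To close the loop with the available claims (which only cover $k\le 3$), the cleanest route is: observe that $G\xx{X_1}$ having a chord of $C\xx{X_1}$ would, by Corollary~\ref{SubCycle}, give a proper critical subcanvas of $T\xx{X_1}$; then reduce to a bounded-$k$ relaxation by noting that the chord $e$ and one tripod already isolate a critical piece using at most a constant number of tripod-deletions — more precisely, I would pick a tripod $x$ incident with $e$, pass to $T\xx{x}$ (a $1$-relaxation of $T$), and show that within $T\xx{x}$ the chord $e$ (now a chord of the outer cycle, or close to it) yields either a true dividing vertex or a proper critical subcanvas of a $k$-relaxation with $k\le 3$, contradicting Claim~\ref{Relax2} or Claim~\ref{Relax4}. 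The main obstacle I anticipate is the bookkeeping in this last reduction: relating a chord of the global cycle $C\xx{X_1}$ to a controlled (constant-$k$) relaxation of $T$ requires care, because $X_1$ may be large, so I must localize the contradiction to one or two tripods near $e$ rather than relaxing over all of $X_1$ at once. Once that localization is done, the inequality arithmetic (via Claims~\ref{Relax1}, \ref{DividingTrue}, \ref{ProperCrit} and hypotheses (I1)--(I3)) goes through exactly as in the proofs of Claims~\ref{Relax2} and~\ref{Relax4}.
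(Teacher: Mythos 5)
Your overall direction is the right one (localize to a single tripod incident with the chord and contradict the dividing-vertex/relaxation machinery), but the proposal stops exactly where the proof actually happens, and you say so yourself ("the main obstacle I anticipate is the bookkeeping in this last reduction"). Two concrete steps are missing. First, you only establish that one end of the chord $e=v_1v_2$ is a tripod; the paper's proof needs, and gets almost for free, that \emph{both} ends are tripods: if $v_2\in V(C)$, then since every member of $X_1$ is a tripod with exactly three neighbours on $C$ (Claim~\ref{X1}), $v_2$ would have to be one of those three neighbours, but the two extreme attachment edges of $v_1$ lie on $C\xx{X_1}$ and the middle neighbour does not lie on $C\xx{X_1}$ at all, so $v_1v_2$ could not be a chord of $C\xx{X_1}$ --- in other words, "$v_2$ is also a tripod, as otherwise $v_1$ is not a tripod." Second, once both ends are tripods, no search for a proper critical subcanvas or appeal to Claim~\ref{Relax4} is needed: in the $1$-relaxation $T\xx{v_1}$, the vertex $v_2$ is adjacent to $v_1$ and to its three neighbours on $C$, all of which lie on the outer cycle $C\xx{v_1}$, and since $v(T)\ge 5$ (Claim~\ref{Base}) this makes $v_2$ a \emph{true dividing} vertex of $T\xx{v_1}$, directly contradicting Claim~\ref{Relax2}.

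So the gap is not one of strategy but of substance: your sketch leaves open which vertex serves as the dividing vertex and why the dividing conditions (both sides having at least two non-cycle edges, the defining edges being present) are met, and it hedges between Claim~\ref{Relax2} and Claim~\ref{Relax4} without verifying either route. The paper's argument is a four-line pointwise verification, and without the two observations above (the second endpoint is a tripod; that endpoint is a true dividing vertex of the $1$-relaxation), the proposal as written does not yet constitute a proof.
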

\begin{proof}
Suppose not. Let $v_1v_2$ be a chord of $C\xx{X_1}$. As $C$ has no chord by Claim~\ref{Chord}, we may assume without loss of generality that $v_1\not\in V(C)$. Thus $v_1$ is a tripod of $C$. Hence $v_2$ is also a tripod, as otherwise $v_1$ is not a tripod. But then $v_2$ is a true dividing vertex for $T\xx{v_1}$ because $v(T)\ge 4$ by Claim~\ref{Base}, contradicting Claim~\ref{Relax2}.
\end{proof}

\begin{claim}\label{vTX1}
$v(T\xx{X_1})\ge2$. 
\end{claim}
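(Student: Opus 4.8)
The goal is to show $v(T\xx{X_1})\ge 2$, i.e.\ that when we cut $G$ down to the subgraph enclosed by the outer face of $G[V(C)\cup X_1]$ containing all the remaining vertices and edges, at least two internal vertices survive. The natural strategy is to suppose for a contradiction that $v(T\xx{X_1})\le 1$ and derive a bound on $d(T)$ that contradicts $T$ being a counterexample (i.e.\ that forces $d(T)\ge 3-\gamma$). The key tool is that each member of $X_1$ is a tripod of $T$ by Claim~\ref{X1}, so $T\xx{X_1}$ is obtained from $T$ by a sequence of tripod-relaxations, and Claim~\ref{Relax1} controls how much $d$ can drop along the way: if $|X_1|=k$ then $d(T)\ge d(T\xx{X_1}) - k(2\alpha+\epsilon)$.

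First I would record the size constraint: by Claim~\ref{Base}, $v(T)\ge 5$, and $V(G)\setminus V(C) = X_1 \cup (V(G\xx{X_1})\setminus V(C\xx{X_1}))$, so $|X_1| \ge v(T) - v(T\xx{X_1}) \ge 5 - 1 = 4$. Now I would split into cases on $v(T\xx{X_1})\in\{0,1\}$ and compute $d(T\xx{X_1})$ from Proposition~\ref{d0}, bounding $|E(G\xx{X_1})\setminus E(C\xx{X_1})|$ from below. The point is that $C\xx{X_1}$ is a cycle through (at least three, in fact many) former tripods and the portion of $C$ between them, and $G\xx{X_1}$ must contain all the edges from the tripods in $X_1$ into $C\xx{X_1}$; each tripod has degree at least five (Proposition~\ref{Facts}(2)) and by Claim~\ref{QuasiSame} has degree $5$ or $6$, contributing a controlled number of edges. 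I expect that, because $|X_1|\ge 4$ and $G$ is $C$-critical, the count of edges not on $C\xx{X_1}$ is large enough that $d(T\xx{X_1})$ is at least something like $4|X_1| - (\text{small correction})$, which after subtracting $|X_1|(2\alpha+\epsilon)$ via Claim~\ref{Relax1} and using inequalities (I1)--(I3) still beats $3-\gamma$. (One should also invoke Claim~\ref{X1Chord} — $C\xx{X_1}$ has no chord — and Claim~\ref{Relax4} — no proper critical subcanvas in small relaxations — to rule out the degenerate situation where few edges enter the interior; and Proposition~\ref{Facts}(1) to prevent short cycles bounding empty regions.)

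I would carry the steps out in this order: (i) assume $v(T\xx{X_1})\le 1$; (ii) deduce $|X_1|\ge 4$ from Claim~\ref{Base}; (iii) using that each $x\in X_1$ is a regular tripod with at least three neighbors on $C$ and (by Claim~\ref{QuasiSame}) degree $5$ or $6$, count the edges of $G$ incident with $X_1$ that lie outside $E(C)$, hence bound $|E(G\xx{X_1})\setminus E(C\xx{X_1})|$ and ${\rm def}(T\xx{X_1})$ from below; (iv) bound $s(T\xx{X_1}) = \epsilon v(T\xx{X_1}) + \alpha(b+q) \le \epsilon + 2\alpha\cdot(\text{something linear in }|C\xx{X_1}|)$ — here I must be careful, since $b(T\xx{X_1})$ and $q(T\xx{X_1})$ could a priori be large; (v) combine to get $d(T\xx{X_1})\ge 4-\gamma$ or better; (vi) apply Claim~\ref{Relax1} and inequalities (I2), (I3) to conclude $d(T)\ge 3-\gamma$, contradicting the choice of $T$.

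\textbf{The main obstacle.} The delicate point is step (iv): the subtracted quantity $s(T\xx{X_1}) = \epsilon v(T\xx{X_1})+\alpha(b(T\xx{X_1})+q(T\xx{X_1}))$ involves the boundary and quasi-boundary of the relaxed canvas, and a priori $|C\xx{X_1}|$ could be comparable to $|C|$ so $b+q$ need not be small. The resolution must be that the edges we are counting in $\mathrm{def}$ (roughly, four per tripod in $X_1$, since a degree-$5$ tripod contributes $5$ edges of which $3$ go to $C$, leaving $2$, plus the edge-count bookkeeping through relaxations) dominate the linear-in-boundary penalty precisely because inequality (I1), $3\epsilon\le 2\alpha$, and (I2), $6\alpha+3\epsilon\le\gamma$, are calibrated for exactly this tradeoff — the same calculation that drives Claim~\ref{Base}. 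So the real work is organizing the edge count around the $|X_1|\ge 4$ tripods carefully enough that the bound on ${\rm def}(T\xx{X_1})$ exceeds $s(T\xx{X_1}) + 4 - \gamma + |X_1|(2\alpha+\epsilon)$; I expect this to go through but to require invoking Claim~\ref{QuasiSame} to pin down tripod degrees and Claim~\ref{X1Chord} to keep the enclosed region from collapsing.
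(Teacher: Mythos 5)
There is a genuine gap, and in fact the planned route cannot work as sketched. Your contradiction scheme needs $d(T\xx{X_1})$ to be large (you hope for roughly $4|X_1|$) so that after losing $|X_1|(2\alpha+\epsilon)$ through Claim~\ref{Relax1} you still beat $3-\gamma$. But the edges you propose to count — the edges from the tripods of $X_1$ into $C$ and into the rest of $G$ — do not contribute to ${\rm def}(T\xx{X_1})$ at all: the vertices of $X_1$ lie \emph{on} the cycle $C\xx{X_1}$, and their edges to $C$ lie outside the closed disk bounded by $C\xx{X_1}$, hence outside $G\xx{X_1}$. Under your hypothesis $v(T\xx{X_1})\le 1$, Proposition~\ref{d0} together with Claim~\ref{X1Chord} (no chords of $C\xx{X_1}$) gives $d(T\xx{X_1})=0$ when $v(T\xx{X_1})=0$ and only $d(T\xx{X_1})=\deg_{G\xx{X_1}}(w)-3-(2\alpha+\epsilon)$ for the unique internal vertex $w$ when $v(T\xx{X_1})=1$; neither is anywhere near $3-\gamma+|X_1|(2\alpha+\epsilon)$, so step (v) fails and no contradiction with the minimality of $T$ is obtained. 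The difficulty you flag in step (iv) about $b$ and $q$ is therefore not the real obstacle; the edge count itself is.

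The statement is much more elementary than a deficiency count. By Claim~\ref{X1}, $X_1\ne\emptyset$ and each $x\in X_1$ is a tripod, so it has exactly three neighbors on $C$; by Proposition~\ref{Facts}(2) it has degree at least five, hence at least two neighbors outside $V(C)$. None of these neighbors lies in $X_1$, since an edge between two vertices of $X_1$ would be a chord of $C\xx{X_1}$, contradicting Claim~\ref{X1Chord}. Since $V(C\xx{X_1})\subseteq V(C)\cup X_1$, these two neighbors are internal vertices of $G\xx{X_1}$ (they lie in the closed disk bounded by $C\xx{X_1}$ because $x$ is a regular tripod and all of $G$ beyond $G[V(C)\cup X_1]$ lies in that face), giving $v(T\xx{X_1})\ge 2$ directly. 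Your incidental observation that $v(T\xx{X_1})=0$ would contradict criticality of $T\xx{X_1}$ is correct but unnecessary once this degree-plus-no-chord argument is in place.
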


\begin{proof}
Every tripod of $T$ has at least two neighbors in $(G\xx{X_1})\setminus V(C)$  by Proposition~\ref{Facts}(2),
and no   neighbor in $X_1$ by Claim~\ref{X1Chord}.
\end{proof}


Let $X_2$ be the set of all tripods and quadpods of $G\xx{X_1}$.

\begin{claim}\label{X2}
We have $X_2\ne \emptyset$. Furthermore, let $x_2\in X_2$, and let $u_1,u_2\ldots,u_k$ be all the neighbors 
of $x_2$ in $C\xx{X_1}$ listed in  a standard order. 
Then $k=3$ and $u_2\in V(C)$. 
In particular, every member of $X_2$ is a tripod.
\end{claim}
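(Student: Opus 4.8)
The goal is to show that $X_2 \neq \emptyset$, and that any $x_2 \in X_2$ is in fact a tripod with its ``middle'' neighbor $u_2$ lying on the original outer cycle $C$. The overall strategy mirrors the proofs of Claims~\ref{X1} and~\ref{X1Chord}: apply Theorem~\ref{CycleChordTripod} to the critical canvas $T\xx{X_1}$ (which is critical with $v(T\xx{X_1}) \geq 2$ by Claims~\ref{TX1welldef} and~\ref{vTX1}), use Claim~\ref{X1Chord} to rule out the chord alternative, and then analyze the resulting tripod/quadpod using the relaxation machinery. The key point is that $T\xx{X_1}$ is a $1$-relaxation of $T$ only when $|X_1| = 1$; in general it is obtained from $T$ by a single ``multi-tripod'' contraction. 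However, for the purposes of detecting dividing vertices we only need that $T\xx{X_1}$ is critical and has no chord of $C\xx{X_1}$, which we have.

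\textbf{Nonemptiness and the tripod conclusion.} First I would observe that $G\xx{X_1} \neq C\xx{X_1}$ since $v(T\xx{X_1}) \geq 2$, and that $C\xx{X_1}$ has no chord by Claim~\ref{X1Chord}. Hence Theorem~\ref{CycleChordTripod} applied to $T\xx{X_1}$ yields a vertex $v \in V(G\xx{X_1}) \setminus V(C\xx{X_1})$ with at least three neighbors on $C\xx{X_1}$, such that at most one face of $(G\xx{X_1})[\{v\} \cup V(C\xx{X_1})]$ includes a vertex or edge of $G\xx{X_1}$; that is, $v$ is a tripod or quadpod of $G\xx{X_1}$. This shows $X_2 \neq \emptyset$. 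Now let $x_2 \in X_2$ with neighbors $u_1, \dots, u_k$ on $C\xx{X_1}$ in standard order, and suppose for contradiction that either $k \geq 4$, or $k = 3$ but $u_2 \notin V(C)$. In either case, by considering a pair of consecutive (or the two extreme) neighbors, $x_2$ behaves like a dividing vertex relative to $C\xx{X_1}$: splitting along $u_i x_2 u_j$ for an appropriate pair produces two cycles each enclosing at least two edges outside themselves (here I would need to check, using $v(T\xx{X_1}) \geq 2$ and Proposition~\ref{Facts}, that both sides are nontrivial). Pulling this configuration back through the $X_1$-contraction, $x_2$ together with the enclosed tripods of $X_1$ witnesses a true (or strong) dividing vertex in a suitable relaxation of $T$ — concretely, in $T\xx{X_1'}$ where $X_1' \subseteq X_1$ is chosen so that $T\xx{X_1'}$ is a $0$-, $1$-, or $2$-relaxation of $T$ exhibiting $x_2$ as a true dividing vertex — contradicting Claim~\ref{Relax2}.

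\textbf{Pinning down $u_2$.} The cleanest route is: if $k = 3$ and $u_2 \in V(C)$ we are done, so assume not. If $k \geq 4$, then in $G\xx{X_1}$ the vertex $x_2$ has two neighbors $u_1, u_2$ on $C\xx{X_1}$ whose removal-split gives two cycles $C_1, C_2$ with $C_1 \cap C_2 = u_1 x_2 u_2$; since the face of $(G\xx{X_1})[\{x_2\}\cup V(C\xx{X_1})]$ on the ``$u_3,\dots,u_k$'' side must still contain material ($x_2$ has degree $\geq 5$ by Proposition~\ref{Facts}(2), and its non-$C\xx{X_1}$ neighbors lie on that side), $x_2$ is a true dividing vertex of $G\xx{X_1}$, hence of a relaxation of $T$ of order at most $|X_1|$ restricted appropriately — contradiction with Claim~\ref{Relax2} once we arrange the relaxation order to be $\leq 2$. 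If $k = 3$ but $u_2 \notin V(C)$, then $u_2$ is itself a tripod of $T$ (an element of $X_1$), and the portions of $C\xx{X_1}$ between $u_1, u_2$ and between $u_2, u_3$ each enclose a nontrivial chunk of $G$ (again by degree and criticality considerations), so $x_2$ is a true dividing vertex of $T\xx{X_1 \setminus \{u_2\}}$, which is a $1$- or $2$-relaxation of $T$ — contradicting Claim~\ref{Relax2}.

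\textbf{Main obstacle.} The routine calculations are bookkeeping; the real subtlety is the bookkeeping about \emph{relaxation order}. Claim~\ref{Relax2} only forbids true dividing vertices in $k$-relaxations for $k \leq 2$ (and strong ones for $k \leq 1$), so I must be careful that the relaxation of $T$ in which $x_2$ appears as a dividing vertex has order at most $2$ — this is why the claim is stated for $X_2$ coming from $G\xx{X_1}$ rather than from a deeper relaxation. I expect the argument to go through by choosing, for the bad neighbor pair of $x_2$, a minimal set of tripods from $X_1$ whose contraction already realizes $x_2$ as a dividing vertex: at most one tripod on each side of the relevant chord is needed to make the two sides nontrivial, giving relaxation order $\leq 2$. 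Making this ``at most one per side'' claim precise, using Proposition~\ref{Facts}(1) to bound how small the enclosed regions can be, is the one step I would flag as needing genuine care rather than a one-line invocation.
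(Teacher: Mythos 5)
Your nonemptiness argument (Theorem~\ref{CycleChordTripod} applied to the critical, chordless canvas $T\xx{X_1}$) is exactly the paper's. The genuine problem is in the step ruling out $k\ge 4$: the split you actually write down fails the definition of a dividing vertex. You split along $u_1x_2u_2$, a pair of \emph{consecutive} neighbors in standard order, and verify only that the $u_3,\dots,u_k$ side contains material. But a dividing vertex needs $|E(T\langle C_i\rangle)\setminus E(C_i)|\ge 2$ on \emph{both} sides, and the side bounded by $u_1x_2u_2$ and the arc of $C\xx{X_1}$ between $u_1$ and $u_2$ is, by the very definition of standard order, a face of $G\xx{X_1}$ containing no vertex or edge of $G\xx{X_1}$; it stays empty after un-contracting members of $X_1$, since tripod cut-off regions contain no material and no member of $X_1$ can lie in the interior of that arc (such a vertex would have degree less than five). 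So no pair of consecutive neighbors can ever witness a dividing vertex. The paper instead splits at $u_1$ and $u_4$, after first noting that one may assume $u_1\in X_1$ because $x_2\notin X_1$: then one side contains the two edges $x_2u_2,x_2u_3$ and the other contains the rest of the graph, which is nonempty by Claim~\ref{vTX1}, so $x_2$ is a true dividing vertex of $T\xx{u_1}$ or of $T\xx{\{u_1,u_4\}}$, contradicting Claim~\ref{Relax2}.

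For $k=3$ with $u_2\notin V(C)$ your argument also rests on a false assertion: the regions between $u_1,u_2$ and between $u_2,u_3$ do \emph{not} ``enclose a nontrivial chunk of $G$'' (they are empty faces), and $T\xx{X_1\setminus\{u_2\}}$ is in general not a relaxation of order at most $2$; moreover the tripods that must remain contracted are the split endpoints $u_1,u_3$ (so that they lie on the outer cycle), not tripods chosen to make the sides nontrivial. A repaired version of your route does exist: split at $u_1,u_3$ inside $T\xx{X_1\cap\{u_1,u_3\}}$, a relaxation of order at most $2$, and observe that the side containing $u_2$ holds the four edges $x_2u_2,u_2c_1,u_2c_2,u_2c_3$ (with $c_1,c_2,c_3$ the $C$-neighbors of the tripod $u_2$), while the other side is nontrivial since $\deg(x_2)\ge 5$. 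But you are missing the paper's much shorter argument, which handles all interior neighbors at once: for $2\le i\le k-1$, the faces of $(G\xx{X_1})[V(C\xx{X_1})\cup\{x_2\}]$ on either side of $u_i$ are empty and $C\xx{X_1}$ has no chords (Claim~\ref{X1Chord}), so if $u_i\in X_1$ then its only neighbors are its three neighbors on $C$ together with $x_2$, giving $\deg(u_i)=4$ and contradicting Proposition~\ref{Facts}(2). Hence $u_i\in V(C)$, which yields $u_2\in V(C)$ immediately and also supplies the ``without loss of generality $u_1\in X_1$'' needed in the $k\ge4$ case.
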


\begin{proof}
By Claim~\ref{X1Chord}, there does not exist a chord of $C\xx{X_1}$, and hence from
Claim~\ref{TX1welldef} and Theorem~\ref{CycleChordTripod} it follows that $X_2\ne \emptyset$. 
Let $x_2\in X_2$ and $u_1,u_2\ldots,u_k$ be as stated.
Let $i\in\{2,3,\ldots,k-1\}$. If $u_i\in X_1$, then $u_i$ has three neighbors in $C$ and is adjacent to $x_2$,
but  has no other neighbors, contrary to Proposition~\ref{Facts}(2).
Thus $u_i\in V(C)$. 

We may assume that $k\ge4$, for otherwise the remaining two assertions hold.
Since $x_2\not\in X_1$ we may assume from the symmetry that $u_1\in X_1$.
By considering the vertices $u_1$ and $u_4$ we find that $x_2$
is a true dividing vertex of either $T\xx{u_1}$ (if $u_4\in V(C)$) 
 or $T\xx{\{u_1,u_4\}}$ (if $u_4\not\in V(C)$), in either case contrary to Claim~\ref{Relax2}.
%
\end{proof}



Since $T$ is a critical canvas there exists an  $L$-coloring of $C$ that does not extend to
an $L$-coloring of $G$. For the rest of the proof let us fix one such $L$-coloring $\phi$.

\begin{claim}\label{c:phiext}
The coloring $\phi$ extends to every proper subgraph of $G$ that contains $C$ as a subgraph.
\end{claim}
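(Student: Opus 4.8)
The plan is to show that the failure of $\phi$ to extend to $G$ itself is the \emph{only} obstruction, i.e.\ that $\phi$ extends to every proper subgraph of $G$ containing $C$. First I would recall that by Claim~\ref{NoProperCrit} there is no proper $C$-critical subgraph of $G$. Now suppose for contradiction that $\phi$ does \emph{not} extend to some proper subgraph $G'$ of $G$ with $C\subseteq G'$; choose such a $G'$ with $E(G')$ minimal. By minimality of $E(G')$, for every edge $e\in E(G')\setminus E(C)$ the coloring $\phi$ extends to $G'\setminus e$ (note $G'\setminus e$ still contains $C$ and is a proper subgraph of $G$, though possibly equal to $C$, in which case $\phi$ trivially extends); yet $\phi$ does not extend to $G'$. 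Thus $\phi$ witnesses that $G'$ is $\phi$-critical, hence $C$-critical, contradicting Claim~\ref{NoProperCrit} --- unless $G'=C$, but $\phi$ is by hypothesis an $L$-coloring of $C$ so it extends to $C$ trivially.

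The one subtlety is the passage from ``$G'$ is $\phi$-critical'' to ``$G'$ is $C$-critical'': the definition of $C$-critical requires that for every proper subgraph $G''$ with $C\subseteq G''\subset G'$ there is \emph{some} $L$-coloring of $C$ extending to $G''$ but not to $G'$. Our fixed $\phi$ supplies exactly such a coloring for each such $G''$ (since $\phi$ extends to $G'\setminus e$ for any $e\in E(G')\setminus E(C)$, and any proper $G''$ containing $C$ is contained in some $G'\setminus e$, so $\phi$ extends to $G''$; and $\phi$ does not extend to $G'$). One must also check $G'\neq C$, which holds because $\phi$ does not extend to $G'$ but does extend to (in fact colors) $C$. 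Hence $G'$ is a proper $C$-critical subgraph of $G$, contradicting Claim~\ref{NoProperCrit}.

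Alternatively, and more cleanly, one can invoke Proposition~\ref{p:critsubcanvas} directly: if $\phi$ did not extend to the proper subgraph $G'\supseteq C$ of $G$, then the canvas $(G',C,L)$ (after restricting to a $2$-connected subgraph containing $C$, or arguing as in Lemma~\ref{2Conn} to reduce to that case) has an $L$-coloring of $C$ --- namely $\phi$ --- not extending to $G'$, so by Proposition~\ref{p:critsubcanvas} it contains a critical subcanvas, which is a proper $C$-critical subgraph of $G$, again contradicting Claim~\ref{NoProperCrit}. I expect no real obstacle here; the only thing to be careful about is the degenerate case $G'=C$ and the bookkeeping that a proper subgraph of $G$ is still a proper subgraph after deleting one more edge, so that the minimal counterexample argument or the application of Proposition~\ref{p:critsubcanvas} is legitimate.
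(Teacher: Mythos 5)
Your proposal is correct and matches the paper, whose entire proof of this claim is the one-line invocation of Proposition~\ref{p:critsubcanvas} together with Claim~\ref{NoProperCrit} --- exactly your ``cleaner'' alternative; your first, minimal-counterexample argument merely inlines the (omitted) proof of Proposition~\ref{p:critsubcanvas}. The only nitpick is that minimizing $|E(G')|$ alone leaves the harmless case of a proper subgraph obtained by deleting an isolated internal vertex, which is handled trivially since such a vertex has a list of size at least $5$.
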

\begin{proof}
This follows from  Proposition~\ref{p:critsubcanvas} and Claim~\ref{NoProperCrit}.
\end{proof}




For $v\in V(G)\setminus V(C)$ we let $S(v):=L(v)\setminus \{\phi(u)\,|\, u\in N(v)\cap V(C)\}$.

\begin{claim}\label{LS}
For all $v\in V(G)\setminus V(C)$, $|L(v)|=5$ and $|S(v)|=5-|N(v)\cap V(C)|$.
\end{claim}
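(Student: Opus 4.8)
The plan is to prove Claim~\ref{LS} by exploiting the minimality conditions (M1) and (M2) imposed on the counterexample $T$, in particular (M2), the minimality of $\sum_{v\in V(G)}|L(v)|$. The statement has two parts: first, that every internal vertex has a list of size exactly $5$, and second, that after removing the colors used on neighbors in $C$ under the fixed coloring $\phi$, the remaining list $S(v)$ has size exactly $5-|N(v)\cap V(C)|$. Since $|L(v)|\ge 5$ for all internal $v$ by the definition of canvas, and $|S(v)|\ge |L(v)|-|N(v)\cap V(C)|$ trivially, the two parts will follow if I can show that (i) no internal vertex has $|L(v)|\ge 6$, and (ii) the removed colors $\{\phi(u): u\in N(v)\cap V(C)\}$ are pairwise distinct and all actually lie in $L(v)$ — equivalently, that no "savings" occur that would make $|S(v)|$ larger than $5-|N(v)\cap V(C)|$.

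First I would handle part (i). Suppose some internal vertex $v$ has $|L(v)|\ge 6$. I would form a new list assignment $L^\star$ by deleting one arbitrary color from $L(v)$, leaving all other lists unchanged, so that $\sum|L^\star(w)|<\sum|L(w)|$. The triple $(G,C,L^\star)$ is still a canvas (the relevant lists still have size $\ge 5$, and an $L^\star$-coloring of $C$ still exists since we did not touch $C$). I need to check that $(G,C,L^\star)$ is still $C$-critical and still a counterexample to Theorem~\ref{StrongLinear}. Criticality: for any proper subgraph $G'\supseteq C$, any $L$-coloring of $C$ witnessing non-extendability to $G'$ is also an $L^\star$-coloring of $C$, and conversely — but I must be slightly careful, since shrinking $L(v)$ could in principle destroy some non-extending colorings. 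The cleaner route: $G$ is still not $L^\star$-colorable (fewer colorings available), and every proper subgraph containing $C$ is still $C$-critical-compatible in the needed sense — actually the right statement is that $(G,C,L^\star)$ is $C$-critical because $G$ is not $L^\star$-colorable while, by Claim~\ref{NoProperCrit} applied appropriately, there is no proper $C$-critical subgraph; then invoke Proposition~\ref{p:critsubcanvas}. The deficiency quantities ${\rm def}$, $s$, $d$ depend only on the graph $G$ and $C$, not on $L$, so $d(T^\star)=d(T)$ and $v(T^\star)=v(T)\ge 2$; hence $T^\star$ is again a counterexample, contradicting (M2). The main subtlety — and I expect this to be where care is needed — is arguing that $(G,C,L^\star)$ remains $C$-critical; the paper has set up Claim~\ref{NoProperCrit} and Proposition~\ref{p:critsubcanvas} precisely to make this smooth, so I would lean on those rather than re-proving criticality by hand.

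Next, part (ii). Given $|L(v)|=5$ from part (i), I must show $|S(v)|=5-|N(v)\cap V(C)|$, i.e. that $|\{\phi(u): u\in N(v)\cap V(C)\}|=|N(v)\cap V(C)|$ and all these colors lie in $L(v)$. If instead $|S(v)|>5-|N(v)\cap V(C)|$ for some $v$, I would construct a new list assignment: replace $L(v)$ by a subset $L^\star(v)\subseteq L(v)$ of size $5$ chosen so that it still contains $S(v)$ but the bookkeeping improves — more precisely, the trick is to modify $\phi$'s effect by shrinking $L(v)$ to kill the redundancy. Actually the standard argument here: since $|S(v)| > 5 - |N(v) \cap V(C)|$, there is a color $c \in L(v)$ that is "wasted" (either it is not of the form $\phi(u)$, yet there are fewer than $|N(v)\cap V(C)|$ distinct such $\phi(u)$ in $L(v)$ — meaning removing any one color still leaves $|S(v)|$ unchanged). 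Then deleting that color from $L(v)$ gives $L^\star$ with smaller total list size, still a canvas, still $C$-critical by the same reasoning as in part (i) (here using that $\phi$ is still a valid non-extending coloring of $C$ for $L^\star$, since $S(v)$ is unchanged so the same colorings of $G$ are available near $v$), and still a counterexample since $d$ is unchanged — contradicting (M2) again. I expect the bulk of the work to be in carefully verifying, in both parts, that the modified canvas is genuinely $C$-critical and genuinely a counterexample; the arithmetic itself is trivial since $|L(v)|\ge 5$ and $S(v)$ loses at most $|N(v)\cap V(C)|$ colors, so the only possible failure of the claimed equalities is "too much slack," which is exactly what (M2) forbids.
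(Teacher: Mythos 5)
Your part (i) is essentially the paper's own argument: delete a color from a list of size at least $6$, note that $\phi$ is still a coloring of $C$ that does not extend, apply Proposition~\ref{p:critsubcanvas} to get a critical subcanvas, rule out a proper one via Claim~\ref{ProperCrit} (which is indeed stated for an arbitrary list assignment $L'$, so your ``applied appropriately'' is legitimate), and contradict (M2) since $d$ and $v$ depend only on $(G,C)$. That half is fine.

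Part (ii), however, has a genuine gap. Once part (i) gives $|L(v)|=5$ for every internal vertex, your plan of ``deleting a wasted color from $L(v)$ to get $L^\star$ with smaller total list size, still a canvas'' is impossible: the resulting list has size $4$, and the definition of a canvas requires $|L(v)|\ge 5$ for all $v\in V(G)\setminus V(C)$. So the modified triple is not a canvas, it is not eligible as a counterexample to Theorem~\ref{StrongLinear}, and (M2) yields no contradiction; there is no room left to shrink lists, which is exactly why (M2) is the wrong tool here. The slack $|S(v)|>5-|N(v)\cap V(C)|$ must instead be converted into a redundant \emph{edge}: it means $v$ has two distinct neighbors $w_1,w_2\in V(C)$ with $\phi(w_1)=\phi(w_2)$ (or a $C$-neighbor whose color is absent from $L(v)$), and then any extension of $\phi$ to $G\setminus vw_1$ would already be an $L$-coloring of $G$, so $\phi$ does not extend to the proper subgraph $G\setminus vw_1$ either, contradicting Claim~\ref{c:phiext} (equivalently, criticality of $G$, i.e.\ the (M1)-side machinery). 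That edge-deletion step is what the paper uses and what your proposal is missing; as written, your argument for the second equality does not go through.
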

\begin{proof}
Suppose for a contradiction that $|L(v)|\ge6$ for some $v\in V(G)\setminus V(C)$.
Let $c\in L(v)$ and let $L'$ be defined by $L'(v):=L(v)\setminus\{c\}$ and $L'(x):=L(x)$
for all $x\in V(G)\setminus\{v\}$.
Then $(G,C,L')$ is a canvas and $\phi$ clearly does not extend to an $L'$-coloring of $G$.
By  Proposition~\ref{p:critsubcanvas} the canvas $(G,C,L')$ has a critical subcanvas $(G',C,L')$.
Condition (M2) in the choice of $T$  implies that $G'$ is a proper  subgraph of $G$,
but that contradicts Claim~\ref{ProperCrit} applied to $T_0=T$ and $G'$.
This proves that $|L(v)|=5$ for every $v\in V(G)\setminus V(C)$.

To prove the second statement suppose for a contradiction that $|S(v)|>5-|N(v)\cap V(C)|$ 
for some $v\in V(G)\setminus V(C)$.
Thus $v$ has two distinct neighbors $w_1,w_2\in V(C)$ such that $\phi(w_1)=\phi(w_2)$.
But then $\phi$ does not extend to $G \setminus vw_1$, contrary to Claim~\ref{c:phiext}.
\end{proof}


By Claim~\ref{X2}  there exists $x_2\in X_2$. Let $u_1,u_2,u_3$ be as in Claim~\ref{X2},
and let $U:=\{u_i\,|\,u_i\in X_1\}$. Thus $U\ne\emptyset$ and $U\subseteq\{u_1,u_3\}$.
Let us choose $x_2$ such that $|U|$ is minimized.
We refer to Figures~\ref{fig:tripod1} and~\ref{fig:tripod2} for a depiction of $x_2$
and two other vertices whose existence will be established shortly.

\begin{figure}[htb]
 \centering
\includegraphics[scale = .25]{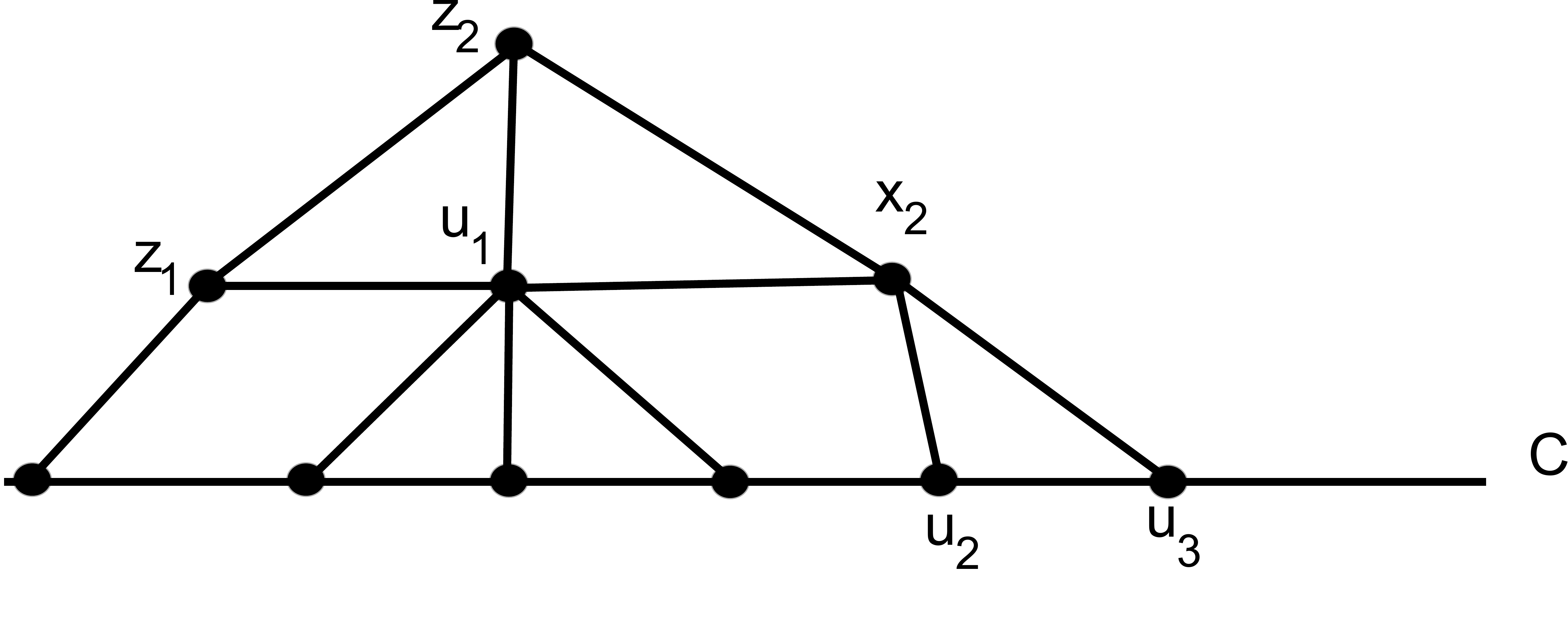}
 \caption{Case 1 of a tripod of $G\xx{X_1}$.}
\label{fig:tripod1}
\end{figure}

\begin{figure}[htb]
 \centering
\includegraphics[scale = .25]{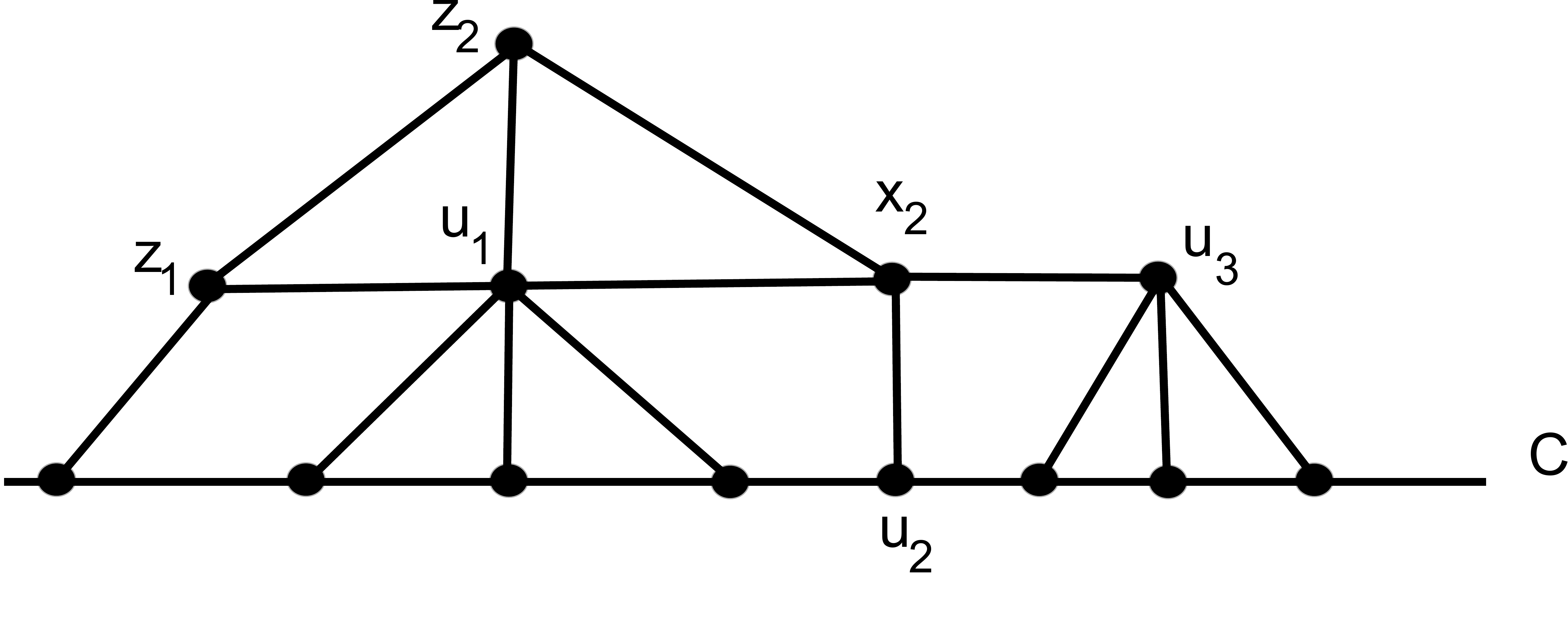}
 \caption{Case 2 of a tripod of $G\xx{X_1}$.}
\label{fig:tripod2}
\end{figure}


\begin{claim}\label{DegSpecial}
If $u\in U$, then $\deg(u)=6$ and there exist adjacent vertices $z_1,z_2\not\in V(C)$ such that $z_1$ is adjacent to $u$ and is in $B(T)$, and $z_2$ is adjacent to $u$ and $x_2$.
\end{claim}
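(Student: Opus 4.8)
\textbf{Proof plan for Claim~\ref{DegSpecial}.} The plan is to apply Claim~\ref{QuasiSame} to the tripod $u\in U\subseteq X_1$, since every member of $X_1$ is a tripod of $T$ by Claim~\ref{X1}. Claim~\ref{QuasiSame} gives two alternatives: either $\deg(u)=5$, or $\deg(u)=6$ with the two non-$C$ neighbors of $u$ forming a path $z_1 z_2$ whose endpoints lie in $B(T)$ (together with $b(T)=b(T')$, $q(T)=q(T')$, $d(T)\ge d(T')-\epsilon$ for $T'=T\xx{u}$). So the whole content of the claim is to rule out the case $\deg(u)=5$ and then to identify which endpoint of the path $z_1z_2$ plays which role.

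First I would dispose of $\deg(u)=5$. Since $u$ is a tripod, it has exactly three neighbors $c_1,c_2,c_3$ on $C$ (in standard order) and exactly two neighbors $w_1,w_2\notin V(C)$; these two neighbors lie in $G\xx{X_1}$ by Proposition~\ref{Facts}(2) and, by Claim~\ref{X1Chord}, are not themselves in $X_1$, so in fact they lie in $(G\xx{X_1})\setminus V(C\xx{X_1})$. By the definition of $X_1$ and Proposition~\ref{Facts}(1), $w_1$ and $w_2$ are adjacent (the triangle $uw_1w_2$ must be filled, or more carefully: $u$ has only two non-$C$ neighbors so the face structure of $G[V(C)\cup\{u\}]$ near the non-$C$ side forces $w_1w_2\in E(G)$; this is exactly the situation analyzed when $\deg(u)=5$). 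Now $x_2$ is a tripod of $G\xx{X_1}$ with $u=u_1\in U$ among its $C\xx{X_1}$-neighbors, so $x_2$ is adjacent to $u$ and hence $x_2\in\{w_1,w_2\}$; say $x_2=w_1$. Then $w_2$ is a common neighbor of $u$ and $x_2$ that is cofacial in $G\xx{X_1}$ with $C$-vertices through $u$. I would argue that $u$'s remaining non-$C$ neighbor together with $x_2$ exhibits $u$ (or rather a suitable vertex) as a true or strong dividing vertex of a $1$-relaxation of $T$ — concretely, in $T\xx{x_2}$ the vertex $u$ would have become a tripod-like vertex with too few neighbors, or $w_2$ would be a dividing vertex — contradicting Claim~\ref{Relax2}. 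This is the step I expect to be the main obstacle: pinning down exactly which relaxation and which dividing vertex to use, and checking that the edge set/vertex count conditions in the definition of (true/strong) dividing vertex are met. The cleanest route is probably to observe that if $\deg(u)=5$ then after passing to $T\xx{u}$ the vertex $x_2$ would be adjacent to both endpoints of the short path and force a strong dividing configuration, which is excluded for a $1$-relaxation by Claim~\ref{Relax2}.

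Once $\deg(u)=5$ is excluded, Claim~\ref{QuasiSame}(2) applies verbatim: $\deg(u)=6$, the two non-$C$ neighbors of $u$ form a path of length two, call its vertices $z_1,z_2$, and both $z_1,z_2\in B(T)$. It remains to name them so that $z_1$ is the endpoint not adjacent to $x_2$ and $z_2$ is adjacent to $x_2$. Since $x_2\in X_2$ is adjacent to $u$, and $x_2\notin V(C)$, $x_2$ is one of the non-$C$ neighbors of $u$; but $x_2$ need not be an endpoint of the path — so I would use that the path has length two, hence three vertices, two of which ($z_1,z_2$ in the notation of Claim~\ref{QuasiSame}(2)) are the endpoints adjacent to $u$, and the middle one is adjacent to $u$ only via... wait, re-reading: in Claim~\ref{QuasiSame}(2) the "neighbors of $x_1$ not in $C$" form a path of length two, i.e. the two non-$C$ neighbors $q_1,q_2$ of $u$ together with the one extra vertex $q\in R$ form $q_1qq_2$, and the endpoints $q_1,q_2$ (the actual neighbors of $u$) are in $B(T)$. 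So $u$'s two non-$C$ neighbors are $q_1,q_2$, one of which is $x_2$; relabel so that $z_2:=x_2=q_2$ and $z_1:=q_1$, the other neighbor of $u$. Then $z_1\in B(T)$ as required, $z_1$ is adjacent to $u$, and $z_2=x_2$ is adjacent to $u$ and to $x_2$ trivially — so I should instead take $z_2:=q$, the middle vertex of the path, which is adjacent to $x_2=q_1$ (or $q_2$) and adjacent to $u$? No: $q$ is not adjacent to $u$. The correct reading is: $z_1,z_2$ are $u$'s two non-$C$ neighbors, they are adjacent to each other (path of length... hmm, "path of length two" with the endpoints in $B(T)$ — the path is $z_1 z_2$?). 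I would simply take $z_1$ to be the non-$C$ neighbor of $u$ other than $x_2$ and $z_2:=x_2$; then $z_1,z_2$ are adjacent (they are $u$'s two non-$C$ neighbors and Claim~\ref{QuasiSame}(2) says they form a path, so $z_1z_2\in E(G)$ if the path is just the edge, or if the path has three vertices then $z_1,z_2$ are its endpoints and there is a middle vertex $q$, in which case $z_1,z_2$ are adjacent to $q$ not each other — I'll need to reconcile this with the figures, but the statement to be proved only asks that $z_1,z_2$ be adjacent, $z_1\in B(T)$, $z_1\sim u$, and $z_2\sim u$ and $z_2\sim x_2$). Since $x_2$ is adjacent to $u$, set $z_2:=x_2$, which is adjacent to $u$ and (trivially) to $x_2$; set $z_1$ to be $u$'s other non-$C$ neighbor, which by Claim~\ref{QuasiSame}(2) lies in $B(T)$ and, being the other end of the length-two path through $x_2$, is adjacent to $z_2=x_2$. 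This completes the identification and hence the claim.
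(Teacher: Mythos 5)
Your proposal correctly begins where the paper does (apply Claim~\ref{QuasiSame} to $u$), but the step you yourself flag as ``the main obstacle'' --- ruling out $\deg(u)=5$ --- is a genuine gap, and your sketch for it would not succeed. First, nothing forces $u$'s two non-$C$ neighbors to be adjacent: $G$ is not a near-triangulation, and Proposition~\ref{Facts}(1) only excludes separating cycles of length at most four, so ``the triangle $uw_1w_2$ must be filled'' is unjustified. Second, the ``short path'' structure you invoke is the \emph{conclusion} of Claim~\ref{QuasiSame}(2), available only when $\deg(u)\ge 6$, so it cannot be used inside the degree-$5$ case. More fundamentally, the degree-$5$ configuration is not structurally contradictory at all (no dividing vertex need exist); the contradiction is a coloring one. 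The paper's argument is: let $C'=(C\xx{U})\xx{x_2}$, let $z$ be a neighbor of $u$ in $V(G\langle C'\rangle)\setminus V(C')$, and extend $\phi$ to $C\cup C'$ by choosing $\phi'(x_2)\in S(x_2)\setminus S(u)$ (possible since $|S(x_2)|\ge 3$ and $|S(u)|=2$ by Claim~\ref{LS}) and $\phi'(u')\in S(u')\setminus\{\phi'(x_2)\}$ for $u'\in U$. If $\phi'$ extended to $G\langle C'\rangle\setminus uz$, recoloring $u$ with a color in $S(u)\setminus\{\phi(z)\}$ would extend $\phi$ to all of $G$ (here $\deg(u)=5$ is exactly what makes $u$'s only remaining neighbors $x_2$ and $z$, both avoidable), a contradiction; hence $G\langle C'\rangle\setminus uz$ contains a proper $C'$-critical subgraph, contradicting Claim~\ref{Relax4} since $T\langle C'\rangle$ is a $k$-relaxation of $T$ with $k\le 3$. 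This reducibility/recoloring mechanism is the heart of the claim and is absent from your plan.

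The identification of $z_1,z_2$ in the degree-$6$ case is also off. By Claim~\ref{QuasiSame}(2), $u$ has \emph{three} neighbors outside $C$, say $q_1,q,q_2$ in cyclic order, forming the path $q_1qq_2$; the middle vertex $q$ lies in $R\subseteq N(u)$, so it \emph{is} adjacent to $u$ (your ``No: $q$ is not adjacent to $u$'' is mistaken), and $x_2$ cannot be the middle vertex because the middle vertex lies outside $Q(T)$ while $x_2$ is adjacent to the $C$-vertex $u_2$ from Claim~\ref{X2}. Thus the correct choice is $z_2:=q$ and $z_1:=$ the end of the path other than $x_2$. Your final choice $z_2:=x_2$ does not satisfy the statement (one needs $z_2$ adjacent to $x_2$, and $z_2\ne x_2$), and it would break the later uses of the claim, e.g.\ in Claim~\ref{Z1} and Claim~\ref{degX2}, where the edges $uz_2$ and $x_2z_2$ with $z_2\notin\{x_2,u\}$ are essential.
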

\begin{proof}
By Claim~\ref{QuasiSame} applied to $T$ and $u$, we find that $\deg(u)\le 6$ and the claim follows unless $\deg(u)=5$. So suppose  for a contradiction that $\deg(u)=5$.

Let $C'=(C\xx{U})\xx{x_2}$ and $T\langle C'\rangle =(G',C',L)$. 
Let $z\in V(G')\setminus V(C')$ be a neighbor of $u$.
We claim that $G'\setminus uz$ has a $C'$-critical subgraph. 
To see this we extend $\phi$ to an $L$-coloring $\phi'$ of $C\cup C'$ as follows.
For $v\in V(C)$ let $\phi'(v):=\phi(v)$.
Since $x_2\not\in X_1$ we have $|S(x_2)|\ge 3$, and $|S(u)|=2$ by Claim~\ref{LS}.
We may  therefore choose $\phi'(x_2)\in S(x_2)\setminus S(u)$.
For $u'\in U$ select $\phi'(u')\in S(u')\setminus\{ \phi(x_2)\}$.
Now if $\phi'$ extends to an $L$-coloring $\phi''$ of $G'\setminus uz$, then 
by re-defining $\phi''(u)$  to be a color in $S(u)\setminus\{\phi(z)\}$ we obtain an extension
of  $\phi$ to an $L$-coloring of $G$, a contradiction.
Thus $\phi'$ does not extend to an $L$-coloring  of $G'\setminus uz$, and so by
Proposition~\ref{p:critsubcanvas}
this proves our claim that $G'\setminus uz$ has a $C'$-critical subgraph, say $G''$. 
But  $G''$ is a proper $C'$-critical subgraph of $G'$, contradicting Claim~\ref{Relax4}. 
\end{proof}

%

\begin{claim}\label{TripodS}
If $u\in U$ and $z\in N(u)\setminus V(C)$, then $S(u)\subseteq S(z)$.
\end{claim}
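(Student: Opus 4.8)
\textbf{Proof proposal for Claim~\ref{TripodS}.}

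The plan is to argue by contradiction: suppose there is a color $c \in S(u) \setminus S(z)$. I would use this extra freedom in $u$'s list to build an $L$-coloring of $G$ extending $\phi$, contradicting the criticality of $T$ and the choice of $\phi$. The key point is that $u \in U \subseteq X_1$ is a tripod of $T$ with exactly three neighbors on $C$, so $|S(u)| = 2$ by Claim~\ref{LS}; meanwhile $z \in N(u)\setminus V(C)$, and by Claim~\ref{DegSpecial} we have $\deg(u) = 6$ with $z \in \{z_1, z_2\}$ one of the two non-cycle neighbors of $u$, where $z_1 z_2$ is an edge and $z_2$ is also adjacent to $x_2$.

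The main step is as follows. First extend $\phi$ to the tripods and relevant vertices. Since $x_2 \notin X_1$, $|S(x_2)| \ge 3$; choose a color for $x_2$ avoiding $S(u)$ (possible since $|S(x_2)| \ge 3 > 2 = |S(u)|$), and extend $\phi$ to the other tripods in $U$ avoiding the color of $x_2$, exactly as in the proof of Claim~\ref{DegSpecial}. Now consider the canvas $T\langle C' \rangle$ where $C' = (C\xx{U})\xx{x_2}$, with the lists reduced by the colors of the already-colored cycle-and-tripod vertices. The vertex $u$ sits on this new outer cycle $C'$ (it is a vertex of $C\xx{U}$), but the edge $uz$ (with $z$ interior) is available. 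I would show that if $\phi'$, suitably extended, failed to extend to $G' = G\langle C'\rangle$, then by Proposition~\ref{p:critsubcanvas} the graph $G' \setminus uz$ would contain a $C'$-critical subgraph, which (being proper in $G'$) contradicts Claim~\ref{Relax4} — UNLESS in fact we can always color, which is what we want. The correct formulation: give $z$ the color $c$ that lies in $S(u) \setminus S(z)$... wait, $c \notin S(z)$ means $c$ is forbidden at $z$ by $C$-neighbors, so instead I should select a valid color for $z$ and then recolor $u$. Concretely: if $\phi'$ extends to $G' \setminus uz$, we can re-define $u$'s color to lie in $S(u) \setminus \{\phi''(z)\}$; this is possible iff $|S(u)| \ge 2$, i.e. always, since we need only avoid one color at $z$. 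But that would already contradict criticality without any hypothesis on $c$ — so the hypothesis $c \in S(u)\setminus S(z)$ must instead guarantee the recoloring is possible even keeping some constraint. The intended argument is: because $c \in S(u)$ but $c \notin S(z)$, coloring $u$ with $c$ is legal at $u$ and automatically legal along the edge $uz$ regardless of $z$'s color (since $z$ can never be colored $c$); hence we may color $G \setminus uz$ extending $\phi'$ (using that $G'\setminus uz$ has no proper $C'$-critical subgraph by Claim~\ref{Relax4}, so $\phi'$ extends), then set $\phi''(u) := c$, obtaining an $L$-coloring of $G$ extending $\phi$ — a contradiction.

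The hard part will be bookkeeping the list sizes after passing to $C' = (C\xx{U})\xx{x_2}$ and checking that the new outer cycle has no chord so that the analogue of Claim~\ref{Relax4} genuinely applies; one must also verify that $z$ actually lies inside $C'$ rather than on it, which uses Claim~\ref{DegSpecial} (the two interior neighbors $z_1, z_2$ of $u$ are separated from $C$ by $u$ itself and $x_2$). Once the contradiction color $c$ is pinned to an edge whose other endpoint can never receive $c$, the extension argument is essentially the same as in Claim~\ref{DegSpecial}, so I expect the remaining work to be routine modulo that setup.
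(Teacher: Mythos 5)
Your final mechanism is the right one, and it is the same as the paper's: precolor $u$ with a color $c\in S(u)\setminus S(z)$, note that $z$ can never receive $c$ in any extension of $\phi$ (so the edge $uz$ is vacuously satisfied), and then use Proposition~\ref{p:critsubcanvas} together with Claim~\ref{Relax4} to conclude that the precoloring extends across the rest of the disk, producing an $L$-coloring of $G$ that extends $\phi$, a contradiction. (Your intermediate detour --- extend first and then recolor $u$ away from $\phi''(z)$ --- would not have worked, since $u$ has two further interior neighbors besides $z$ and $|S(u)|=2$; but you correctly abandon it.)

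There is, however, a genuine gap in scope. Since $\deg(u)=6$ and $u$ has exactly three neighbors on $C$, we have $N(u)\setminus V(C)=\{z_1,z_2,x_2\}$, not $\{z_1,z_2\}$ as you assert; the claim therefore also covers $z=x_2$, and that is precisely the instance invoked later, in Claim~\ref{degX2}, where $S(u)\subseteq S(x_2)$ is used. Your chosen setup structurally cannot handle this case: you relax at $x_2$ as well, taking $C'=(C\xx{U})\xx{x_2}$, so the edge $ux_2$ becomes an edge of the outer cycle $C'$ (it cannot be deleted, and $(G'\setminus ux_2,C',L)$ is not even a canvas), and moreover you precolor $x_2$ with a color in $S(x_2)\setminus S(u)$, which is useless when the goal is $S(u)\subseteq S(x_2)$. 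The paper sidesteps all of this by relaxing only at $u$: with $C'=C\xx{u}$ every vertex of $N(u)\setminus V(C)$, including $x_2$, is interior to $G\langle C'\rangle$, only $u$ needs to be precolored (with a color of $S(u)\setminus S(z)$), $T\langle C'\rangle$ is a $1$-relaxation so Claim~\ref{Relax4} applies, and one argument covers all three choices of $z$. For $z\in\{z_1,z_2\}$ your heavier setup does go through (after checking, as you note, that $z_1,z_2\notin V(C')$ and that $C'$-chords are excluded), so the fix is simply to drop the relaxations at $U$ and $x_2$ and argue inside $C\xx{u}$ alone.
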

\begin{proof}
Suppose that $S(u)\setminus S(z)\ne \emptyset$. 
Let $C'=C\xx{u}$ and $T\langle C'\rangle=(G',C',L)$. 
We claim that $G'\setminus uz$ has a $C'$-critical subgraph. To see this we extend $\phi$ to an $L$-coloring $\phi'$ of $C\cup C'$ as follows.
For $v\in V(C)$ let $\phi'(v):=\phi(v)$, and we choose $\phi'(u)\in S(u)\setminus S(z)$. 
Now  $\phi'$ does not extend to an $L$-coloring  of $G'$, for such an extension would be 
an $L$-coloring  of $G$, a contradiction.
By Proposition~\ref{p:critsubcanvas}
this proves our claim that $G'\setminus uz$ has a $C'$-critical subgraph, say $G''$. 
But  $G''$ is a proper $C'$-critical subgraph of $G'$, contradicting Claim~\ref{Relax4}. 
%
\end{proof}

\begin{claim}\label{BipodS}
If $z\not\in V(C\xx{U})$ is a neighbor of $x_2$ in $G\xx{U}$, then $S(x_2)\subseteq S(z)$.
\end{claim}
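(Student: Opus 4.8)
The plan is to mirror the proof of Claim~\ref{TripodS} almost verbatim, replacing the tripod $u$ of $T$ by the tripod $x_2$ of the relaxed canvas $T\xx{U}$ and replacing the single relaxation step by the relaxation along $U$ first and then along $x_2$. Concretely, suppose for a contradiction that $S(x_2)\setminus S(z)\ne\emptyset$. Let $C'=(C\xx{U})\xx{x_2}$ and write $T\langle C'\rangle=(G',C',L)$; note that $T\xx{U}$ is a $|U|$-relaxation of $T$ with $|U|\le 2$, and $(T\xx{U})\xx{x_2}$ is then a relaxation of $T$ of order at most $3$, so Claim~\ref{Relax4} applies to it. By Proposition~\ref{Facts}(2) the tripod $x_2$ has at least two neighbours off $C\xx{U}$, so $z$ is genuinely an internal vertex of $G'$ and $x_2z\in E(G')$.

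The key step is to extend $\phi$ to an $L$-colouring $\phi'$ of $C\cup C'$ that cannot extend to $G'$ but whose failure is "local". As in Claim~\ref{DegSpecial}, set $\phi'(v):=\phi(v)$ for $v\in V(C)$; for each $u\in U$ we have $|S(u)|=2$ by Claim~\ref{LS}, and $x_2\notin X_1$ gives $|S(x_2)|\ge 3$, so we may first choose $\phi'(x_2)\in (S(x_2)\setminus S(z))$ avoiding nothing else yet, and then for each $u\in U$ pick $\phi'(u)\in S(u)\setminus\{\phi'(x_2)\}$, which is possible since $|S(u)|=2$. This defines $\phi'$ on all of $V(C\cup C')$ (the vertices of $C\xx{U}$ are among $V(C)\cup U$, and $C'$ adds $x_2$). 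If $\phi'$ extended to an $L$-colouring of $G'$, then combining it with $\phi$ on the rest of $G$ would give an $L$-colouring of $G$ extending $\phi$, since outside the disk bounded by $C'$ every vertex of $G$ either lies on $C$ or is a vertex of $X_1\cup X_2$ whose colour under $\phi'$ we have just fixed — contradicting the choice of $\phi$. Hence $\phi'$ does not extend to $G'$, and by Proposition~\ref{p:critsubcanvas} the graph $G'$ (equivalently $G'\setminus x_2z$, whichever phrasing matches the earlier claims) has a proper $C'$-critical subgraph, contradicting Claim~\ref{Relax4}.

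The main obstacle I anticipate is the bookkeeping needed to check that the partial colouring $\phi'$ is well-defined and consistent on $V(C\cup C')$: one must be sure that the vertices on which we are forced to prescribe a colour are exactly $V(C)\cup U\cup\{x_2\}$ (so that $\phi'$ really is defined on all of $C\cup C'$), that the choices $\phi'(u)\in S(u)\setminus\{\phi'(x_2)\}$ do not conflict with each other (they can't, since distinct $u,u'\in U$ are nonadjacent — each has only $C$-neighbours plus $x_2$ by Proposition~\ref{Facts}(2) and Claim~\ref{DegSpecial}), and that re-gluing with $\phi$ on $G\setminus V(G'\langle C'\rangle)$ produces a proper colouring across $C'$. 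A secondary point to get right is which relaxation to quote: $T\xx{U}$ has $|U|\in\{1,2\}$ relaxation steps and $(T\xx U)\xx{x_2}$ one more, so the relevant instance of Claim~\ref{Relax4} is for $k\le 3$, which is exactly its stated range; I would make that explicit. Everything else is a direct transcription of the argument already used for Claims~\ref{TripodS} and~\ref{DegSpecial}.
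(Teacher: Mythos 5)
Your setup (the cycle $C'=(C\xx{U})\xx{x_2}$, the extension $\phi'$ with $\phi'(x_2)\in S(x_2)\setminus S(z)$ and $\phi'(u')\in S(u')\setminus\{\phi'(x_2)\}$, and the observation that an extension of $\phi'$ to $G'$ would yield an $L$-coloring of $G$) matches the paper's proof exactly. But the final step has a genuine gap. From ``$\phi'$ does not extend to $G'$'', Proposition~\ref{p:critsubcanvas} only gives you \emph{some} critical subcanvas of $(G',C',L)$ --- and that subcanvas may be $G'$ itself, which is indeed $C'$-critical (it is a relaxation of $T$, hence critical by Corollary~\ref{SubCycle}). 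So there is no contradiction with Claim~\ref{Relax4}, which only forbids \emph{proper} critical subcanvases; the word ``proper'' in your conclusion is unjustified. Worse, as written your argument never actually uses the choice $\phi'(x_2)\notin S(z)$ (any $\phi'(x_2)\in S(x_2)$ would do), so it cannot possibly establish $S(x_2)\subseteq S(z)$; if it were valid it would ``prove'' an absurdity independent of the hypothesis.

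The missing step --- which is precisely where $\phi'(x_2)\in S(x_2)\setminus S(z)$ does its work, and which is the content of ``the rest of the argument is identical to Claim~\ref{DegSpecial}'' --- is to pass to the proper subgraph $G'\setminus x_2z$ and show that $\phi'$ does not extend even there. Indeed, if $\phi''$ were an extension of $\phi'$ to $G'\setminus x_2z$, then $\phi''(z)\in S(z)$ (all neighbors of $z$ on $C$ lie on $C'$ and keep their $\phi$-colors), so $\phi''(z)\ne\phi'(x_2)$ and $\phi''$ is in fact a proper coloring of all of $G'$, which you have already shown is impossible. Only then does Proposition~\ref{p:critsubcanvas}, applied to $(G'\setminus x_2z,C',L)$, produce a $C'$-critical subgraph that is a \emph{proper} subgraph of $G'$, contradicting Claim~\ref{Relax4}. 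Your parenthetical ``(equivalently $G'\setminus x_2z$, whichever phrasing matches the earlier claims)'' treats this as a cosmetic choice of wording, but the equivalence of non-extendability to $G'$ and to $G'\setminus x_2z$ is exactly the point that must be argued, and it is the only place the hypothesis $S(x_2)\setminus S(z)\ne\emptyset$ enters.
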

\begin{proof}
This follows in a similar manner as the proof of  Claim~\ref{DegSpecial}. 
We extend $\phi$ to an $L$-coloring $\phi'$ of $C\cup C'$ as follows.
For $v\in V(C)$ let $\phi'(v):=\phi(v)$.
We  choose $\phi'(x_2)\in S(x_2)\setminus S(z)$, and
for $u'\in U$ we select $\phi'(u')\in S(u')\setminus\{ \phi(x_2)\}$.
An  extension of $\phi'$ to an $L$-coloring of $G'$ would be an $L$-coloring of $G$.
The rest of the argument is identical to the proof of Claim~\ref{DegSpecial}.
\end{proof}

\begin{claim}\label{Z1}
If $u\in U$ and $z_1, z_2$ are as in Claim~\ref{DegSpecial}, then $N(z_2)\cap V(C\xx{U})=\{u\}$, 
and $|N(z_1)\cap V(C\xx{U})|\le 3$. Furthermore if $|U|=2$, then $|N(z_1)\cap V(C\xx{U})|\le 2$.
\end{claim}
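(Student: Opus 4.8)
\textbf{Proof proposal for Claim~\ref{Z1}.}

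The plan is to bound the neighbors of $z_1$ and $z_2$ on $C\xx{U}$ by the same ``hidden critical subgraph'' technique used in Claims~\ref{DegSpecial}, \ref{TripodS}, and \ref{BipodS}, combined with the relaxation bookkeeping of Claim~\ref{Relax2}. First I would treat $z_2$. Since $z_2$ is adjacent to both $u$ and $x_2$, if $z_2$ had a neighbor $w\in V(C\xx{U})$ other than $u$, then $w$ together with $u$ (or $x_2$, depending on whether $w\in V(C)$ or $w\in X_1$) would exhibit $z_2$ as a true dividing vertex of an appropriate relaxation of $T$ — namely $T\xx{U}$ or $T\xx{U\cup\{x_2\}}$, which is a $k$-relaxation of $T$ with $k\le 2$ since $|U|\le 2$ by Claim~\ref{X2} and the choice of $x_2$. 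Here $v(T)\ge 5$ by Claim~\ref{Base} guarantees the relevant canvas has enough internal vertices for ``true dividing vertex'' to make sense. This contradicts Claim~\ref{Relax2}. Hence $N(z_2)\cap V(C\xx{U})=\{u\}$.

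Next I would bound $N(z_1)\cap V(C\xx{U})$. We know $z_1\in B(T)$ and $z_1$ is adjacent to $u$. By Claim~\ref{DegSpecial}, $\deg(u)=6$ with $u$ adjacent to $z_1, z_2$ off $C$; the vertices $z_1, z_2$ are adjacent. Now $z_1$ has degree at least five by Proposition~\ref{Facts}(2). Suppose $|N(z_1)\cap V(C\xx{U})|\ge 4$. The neighbors of $z_1$ on $C\xx{U}$ include $u$; list them in cyclic order around $z_1$. Since $z_1$ has at least three neighbors on the outer cycle of the relaxed canvas $T\xx{U}$ (which is a $k$-relaxation with $k\le 1$ when $|U|=1$, $k\le 2$ when $|U|=2$), Claim~\ref{TripodOrDividing} tells us $z_1$ is either a regular tripod or a true dividing vertex of $T\xx{U}$. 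If it is a true dividing vertex we contradict Claim~\ref{Relax2} directly. So $z_1$ is a tripod of $T\xx{U}$, meaning it has exactly three neighbors on $C\xx{U}$ and at most one face of the resulting configuration includes a vertex or edge of $G$; combined with $\deg(z_1)\ge 5$ this forces exactly three such neighbors, i.e. $|N(z_1)\cap V(C\xx{U})|\le 3$. For the sharper bound when $|U|=2$: in that case $T\xx{U}$ is a $2$-relaxation, and if $z_1$ had three neighbors on $C\xx{U}$ it would be a tripod, hence $T\xx{U}\xx{z_1}$ would be a $3$-relaxation; I would then use the presence of $z_2$ (a neighbor of $z_1$ off $C\xx{U}$, cofacial appropriately) to produce a true or strong dividing vertex at one level down, or else argue via Claim~\ref{Relax4} that the induced structure forces a proper critical subcanvas of a $3$-relaxation — either way a contradiction — leaving $|N(z_1)\cap V(C\xx{U})|\le 2$.

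The main obstacle I anticipate is the $|U|=2$ case: one must carefully track which relaxation the vertex $z_1$ lives in and verify that the relaxation count stays within the range $\{0,1,2,3\}$ where Claims~\ref{Relax2} and~\ref{Relax4} apply, while also using the geometric constraints (the positions of $z_1, z_2$ relative to the tripod faces, the fact that $z_1\in B(T)$) to rule out the configuration with three outer neighbors. The bookkeeping of which edges $z_1 u$, $u x_2$, $z_1 z_2$ are ``true'' (already present) versus added is delicate, because whether a dividing vertex is true or merely strong changes which of Claims~\ref{DividingTrue} and~\ref{DividingStrong} applies and hence which inequality among (I1)--(I3) is needed. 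I would handle this by always passing first to the relaxation $T\xx{U}$ (respectively $T\xx{U\cup\{x_2\}}$) so that the edges from $u$ and $x_2$ to the outer cycle become genuine edges of the canvas, making any dividing vertex detected there automatically true, so Claim~\ref{Relax2} applies cleanly.
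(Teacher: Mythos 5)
Your handling of the first two assertions is essentially sound. For $z_2$ you follow the paper: a second neighbor on $C\xx{U}$ makes $z_2$ a true dividing vertex of the $\le 2$-relaxation $G\xx{U}$, contradicting Claim~\ref{Relax2} (you are a bit casual about verifying the ``at least two edges on each side'' condition in the definition of dividing vertex, which the paper covers by citing $\deg(u)=6$). For $|N(z_1)\cap V(C\xx{U})|\le 3$ you take a different but legitimate route: four neighbors on $C\xx{U}$ would, via Claim~\ref{TripodOrDividing} (applicable since $v(T\xx{U})\ge 3$ by Claim~\ref{Base}), force $z_1$ to be a true dividing vertex of $T\xx{U}$, again against Claim~\ref{Relax2}; the paper instead argues directly that $N(z_1)\cap U=\{u\}$ (no separating $4$-cycles, Proposition~\ref{Facts}) and $z_1\notin X_1$ (no chord of $C\xx{X_1}$, Claim~\ref{X1Chord}), so $z_1$ has at most two neighbors on $C$.

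The genuine gap is the last assertion, $|U|=2\Rightarrow |N(z_1)\cap V(C\xx{U})|\le 2$. The ingredient that actually closes this case in the paper is the extremal choice of $x_2$: $x_2$ was chosen in $X_2$ with $|U|$ minimized. If $z_1$ had a third neighbor on $C\xx{U}$, then since $N(z_1)\cap U=\{u\}$ (the no-separating-$4$-cycle step, which you never establish and which is needed to know the other two neighbors lie on $C$ rather than at the second vertex of $U$), $z_1$ would have two neighbors on $C$; it is not a true dividing vertex of $G\xx{u}$ by Claim~\ref{Relax2}, so Claim~\ref{TripodOrDividing} makes it a tripod, i.e.\ $z_1\in X_2$, and its associated set of $X_1$-neighbors is just $\{u\}$, of size $1<2=|U|$, contradicting the minimality in the choice of $x_2$. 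Your outline never invokes this minimality. Instead you propose that $z_2$ should yield a true or strong dividing vertex ``one level down,'' or that Claim~\ref{Relax4} applies because a proper critical subcanvas of a $3$-relaxation appears; neither is substantiated. No concrete dividing vertex is exhibited ($z_2$ has only the neighbor $u$ on $C\xx{U}$, by the very first part of the claim, so it does not produce one), and proper critical subcanvases in this paper are always manufactured by explicit coloring-extension arguments (as in Claims~\ref{DegSpecial}, \ref{TripodS}, \ref{BipodS}), none of which you supply here. Moreover, merely landing in a $3$-relaxation is not itself a contradiction: Claim~\ref{Relax2} excludes true dividing vertices only up to $k=2$ and strong ones up to $k=1$. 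As written, the $|U|=2$ bound does not follow from your argument.
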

\begin{proof}
Note that $z_1$ and $z_2$ are adjacent to $u$. Suppose that $|N(z_2)\cap V(C\xx{U})|\ge 2$. 
But then as $\deg(u)=6$, $z_2$ is a true dividing vertex of $G\xx{U}$, a contradiction by Claim~\ref{Relax2}.
Since $G$ has no separating $4$-cycles by Proposition~\ref{Facts}, $N(z_1)\cap U = \{u\}$. 
Since $z_1\not\in X_1$ as $C\xx{X_1}$ has no chords by Claim~\ref{X1Chord}, it follows that $|N(z_1)\cap V(C)|\le 2$,
and hence $|N(z_1)\cap V(C\xx{U})|\le 3$. 
Furthermore suppose $|U|=2$ and $|N(z_1)\cap V(C\xx{U})|\ge 3$. 
Then $|N(z_1)\cap V(C)|=2$; moreover, as $z_1$ is not a true dividing vertex of $G\xx{u}$
by Claim~\ref{Relax2}, it follows from Claim~\ref{TripodOrDividing} that $z_1\in X_2$. 
But then $z_1$ contradicts the choice of $x_2$. 
\end{proof}

Let $T_1:=T\langle C\xx{U}\rangle$ and $T_2:=T_1\xx{x_2}$. 

\begin{claim}\label{dU}
$d(T)\ge d(T_1)-|U|\epsilon$.
\end{claim}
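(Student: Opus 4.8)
\textbf{Proof proposal for Claim~\ref{dU}.}

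The plan is to apply Proposition~\ref{surplussum} (and the accompanying bookkeeping for $s$, $b$, $q$) to the inclusion $C\subseteq C\xx{U}\subseteq G$, viewed the right way. The point of the statement is that passing from $T$ to $T_1=T\langle C\xx{U}\rangle$ amounts to discarding the set $U$ of tripods of $G$ (each of which was absorbed into the outer cycle $C\xx{X_1}$, and here into $C\xx{U}$), together with whatever lies outside $C\xx{U}$. Since $C\xx{U}$ is obtained from $C$ by rerouting around the $|U|$ tripods in $U$, I expect each such tripod to contribute a loss of at most $\epsilon$ to $d$, exactly as a single tripod relaxation does in Claim~\ref{Relax1}. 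So the cleanest route is to mimic the inductive proof of Claim~\ref{Relax1}: induct on $|U|$, peeling off one vertex $u\in U$ at a time. For $|U|=0$ we have $C\xx{U}=C$ and $T_1=T$, so the inequality is trivial. For the inductive step, write $U=U'\cup\{u\}$, let $T'=T\langle C\xx{U'}\rangle$, and observe $T_1=T'\langle C\xx{u}\rangle$; by the induction hypothesis $d(T)\ge d(T')-|U'|\epsilon$, so it suffices to show $d(T')\ge d(T_1)-\epsilon$.

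For that last inequality I would compare $T'$ and $T_1=T'\xx{u}$ directly, using that $u$ is a tripod of $T'$ (this needs a remark: $u\in U\subseteq X_1$ has three neighbours on $C$, and by Claim~\ref{X1} it is a tripod of $T$, hence still a tripod of $T'$ since passing to $C\xx{U'}$ only reroutes the boundary around the other tripods in $U'$, which are disjoint from $u$ and not adjacent to it by Claim~\ref{X1Chord}). Then $\mathrm{def}(T')=\mathrm{def}(T_1)$ because the only vertex of $T'$ outside $G\xx{u}=G\langle C\xx{u}\rangle$ is $u$ itself, which is internal to $T'$ but on the outer cycle of $T_1$, and it contributes three edges (to the two neighbours $c_1,c_k$ on $C$ and... ) — more precisely $v(T')=v(T_1)+1$ and the three edges at $u$ joining it to $C\xx{u}$ move from $E\setminus E(C)$ to $E(C\xx{u})$, so $\mathrm{def}$ is unchanged, exactly as in the proof of Claim~\ref{Relax1}. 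Meanwhile $b(T')\le b(T_1)+1$ and $q(T')\le q(T_1)+1$ since $u$ is the only vertex potentially lost from $B$ and $Q$. Hence $s(T')\le s(T_1)+2\alpha+\epsilon$ — wait, that would give a loss of $2\alpha+\epsilon$, not $\epsilon$; the improvement to $\epsilon$ must come from the fact that $u$, being a tripod of $G$, is cofacial with $C$ but its three $C$-neighbours are already in $V(C)\subseteq B(T_1)\cap Q(T_1)$, so removing $u$ from the graph does not remove any vertex of $C$ from $B$ or $Q$; the only change is $v$ dropping by one, giving $s(T')\le s(T_1)+\epsilon$ — and I'd need to double-check that no vertex of $G\xx{u}$ cofacial with $u$ is lost from $Q$, which holds because such a vertex is cofacial, in $G\xx{u}$, with one of $c_1,c_k\in V(C\xx{u})$. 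Actually the correct statement, matching Claim~\ref{Relax1}'s bound $d(T_0)\ge d(T_0')-(2\alpha+\epsilon)$ per relaxation, is weaker; so I suspect Claim~\ref{dU}'s sharper $\epsilon$ uses Claim~\ref{QuasiSame}/Claim~\ref{DegSpecial}, namely that each $u\in U$ has $\deg(u)=6$ with the special structure, so the vertices $z_1,z_2$ pin $B$ and $Q$ down and force $b(T')=b(T_1)$, $q(T')=q(T_1)$.

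So the refined plan: induct on $|U|$; in the step, use Claim~\ref{DegSpecial} to get $\deg(u)=6$ and the adjacent pair $z_1,z_2$, then argue as in the $|R|=1$ case of Claim~\ref{QuasiSame} that $B(T')=B(T_1)\cup\{u\}$-ish with $b(T')=b(T_1)$ and $q(T')=q(T_1)$ — here the neighbours of $u$ outside $C$ lie in $G\xx{u}$ and remain cofacial with the rerouted boundary, so none of $B$ or $Q$ is lost except $u$ — whence $s(T')= s(T_1)+\epsilon$ and, since $\mathrm{def}$ is preserved, $d(T')\ge d(T_1)-\epsilon$. Combined with the induction hypothesis this yields $d(T)\ge d(T_1)-|U|\epsilon$.

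\textbf{Main obstacle.} The delicate point is the bookkeeping showing $b$ and $q$ are \emph{exactly} preserved (not just up to $+1$) when we discard a single $u\in U$ — equivalently, that no vertex of $C$ and no other vertex leaves $B$ or $Q$. This is where Claims~\ref{DegSpecial}, \ref{Z1} (controlling $z_1,z_2$ and their neighbourhoods on $C\xx{U}$), and Claim~\ref{Relax2} (no true/strong dividing vertices, ruling out extra cofacial vertices) all have to be invoked carefully; getting the face structure of $G[V(C)\cup\{u\}]$ right, so that removing $u$ only merges two faces without disconnecting any vertex from the boundary, is the crux. The rest is the same telescoping argument as in Claim~\ref{Relax1}.
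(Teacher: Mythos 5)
Your overall frame is right — $\mathrm{def}$ is unchanged when the tripods in $U$ are absorbed into the outer cycle, so everything reduces to controlling $b$ and $q$, and for $|U|=1$ your argument (Claim~\ref{DegSpecial} gives $\deg(u)=6$, then Claim~\ref{QuasiSame}(2) gives $b(T\xx{u})=b(T)$, $q(T\xx{u})=q(T)$ and $d(T)\ge d(T\xx{u})-\epsilon$) is exactly the paper's. But there is a genuine gap at the second peel when $|U|=2$. You assert that $b$ and $q$ are \emph{exactly} preserved at each step, arguing ``as in the $|R|=1$ case of Claim~\ref{QuasiSame}'' applied to $T'=T\xx{u_1}$. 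That transplant is not available: the proof of Claim~\ref{QuasiSame} is tied to $T$ itself, because the crucial exclusion of the offending cofacial vertex rests on Claim~\ref{Relax2}, which rules out \emph{strong} dividing vertices only in $k$-relaxations with $k\le 1$. When you peel the second tripod, the relevant canvas $T_1=T\xx{U}$ is a $2$-relaxation, and a strong dividing vertex there is not forbidden. Indeed exact preservation can fail: if the two tripods in $U$ have a common neighbor, or a common cofacial vertex, that vertex witnesses membership in $B(T)$ (resp.\ $Q(T)$) for both relaxations at once and $b(T_1)$ (resp.\ $q(T_1)$) drops by one. Your concluding ``main obstacle'' paragraph flags this bookkeeping but does not resolve it, and no amount of care with Claims~\ref{DegSpecial}, \ref{Z1}, \ref{Relax2} alone will, since the statement you want is simply not always true.

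The missing idea is that this bad case is handled by contradiction rather than by preservation. The paper argues: since each single relaxation $T\xx{u}$ preserves $b$ and $q$ exactly, a drop can occur only when $|U|=2$ and the two vertices of $U$ have a common neighbor or common cofacial vertex $z$; such a $z$ is a \emph{strong} dividing vertex of $T_1$, so Claim~\ref{DividingStrong} (which applies to any critical canvas with at most $|E(G)|$ edges, hence to $T_1$) yields $d(T_1)\ge 4-2\gamma$. Since in that case one still has $b(T_1)\ge b(T)-1$ and $q(T_1)\ge q(T)-1$, it follows that $d(T)\ge d(T_1)-2\epsilon-2\alpha\ge 4-2\gamma-2\alpha-2\epsilon\ge 3-\gamma$ by (I3), contradicting that $T$ is a counterexample. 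So either $b(T_1)\ge b(T)$ and $q(T_1)\ge q(T)$, giving the claimed $d(T)\ge d(T_1)-|U|\epsilon$ directly, or the counterexample is refuted outright. Your proposal needs this branch; note also that the simple numerical patch you considered (absorbing a loss of $\alpha$ against two $\epsilon$'s) does not work, since with the paper's constants $\alpha-2\epsilon<0$.
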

\begin{proof}
This follows by showing that $b(T_1)\ge b(T)$ and $q(T_1)\ge q(T)$. To see that, note that by Claim~\ref{DegSpecial}, $\deg(u)=6$ for all $u\in U$. 
Then by Claim~\ref{QuasiSame}, $b(T\xx{u})=b(T)$ and $q(T\xx{u})=q(T)$ for all $u\in U$. 
Thus if $b(T_1) < b(T)$ or $q(T_1) < q(T)$, it must be that $|U|=2$. 
Furthermore, then  the two vertices in $U$ either have a common neighbor $z$ or common cofacial vertex $z$. 
In either case, $z$ is a strong dividing vertex of $C\xx{U}$.
 Hence by Claim~\ref{DividingStrong}, $d(T_1) \ge 4-2\gamma$. 
Yet $b(T_1) \ge b(T)-1$ and $q(T_1)\ge q(T)-1$. Thus $d(T)\ge d(T_1)-2\epsilon-2\alpha$. So $d(T)\ge 4-2\gamma-2\alpha-2\epsilon$ which is at least $3-\gamma$ as $2\alpha + 2\epsilon + \gamma \le 1$ by inequality (I3), a contradiction.
\end{proof}

\begin{claim}\label{degX2}
$\deg(x_2)=6$ and $d(T)\ge d(T_2)-(|U|+1)\epsilon$.
\end{claim}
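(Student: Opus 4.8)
The plan is to mimic the structure of Claim~\ref{QuasiSame}, applied this time to the canvas $T_1$ and its tripod $x_2$, while accounting for the extra loss incurred by first passing from $T$ to $T_1$. First I would note that $v(T_1)\ge 2$ by Claim~\ref{vTX1} (more precisely, $v(T_1)\ge 3$, since $x_2$ is a tripod of $T_1$ with at least two internal neighbors, by Proposition~\ref{Facts}(2)), so $T_2 = T_1\xx{x_2}$ is a well-defined critical canvas with $v(T_2)\ge 2$, and by the minimality of $T$ we get $d(T_2)\ge 3-\gamma$. Since $x_2\not\in X_1$ it is not a chord-endpoint (by Claim~\ref{X1Chord}), and since $G$ is $C$-critical, $\deg(x_2)\ge 5$; in fact $|S(x_2)|=5-|N(x_2)\cap V(C\xx{X_1})|$-type bookkeeping from Claim~\ref{LS} will be relevant.

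The core step is to rule out $\deg(x_2)\ge 7$ and pin down $\deg(x_2)=6$, by the same strong-dividing-vertex argument used in Claim~\ref{QuasiSame}. Listing the neighbors of $x_2$ in cyclic order around $x_2$ as $u_1,u_2,u_3$ (on $C\xx{U}$, in standard order) followed by internal neighbors $q_1,\dots,q_2$ with $q_1,q_2$ the two "end" internal neighbors adjacent along the relaxed face, I would set $R = N(x_2)\setminus\{u_1,u_2,u_3,q_1,q_2\}$ and show $R\cap Q(T_1)=\emptyset$: any vertex in $R\cap Q(T_1)$ would be a dividing vertex of $T_2$, and because of the flanking neighbors $q_1,q_2$ it would in fact be a strong dividing vertex of $T_2$ — but $T_2$ is a $(k{+}1)$-relaxation of $T$ for the appropriate small $k$, so this is forbidden by Claim~\ref{Relax2} (I will need to check that the relaxation depth is at most the value for which Claim~\ref{Relax2} rules out strong dividing vertices, i.e.\ $k\le 1$ after the $X_1$ and $x_2$ steps, or else reduce to the true-dividing-vertex case). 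Since $R\subseteq B(T_2')\subseteq Q(T_2')$ for the $1$-relaxation, $|R|\ge 2$ would give $q(T_2)\ge q(T_1)+|R|-1$ and $b(T_2)\ge b(T_1)+|R|-1$, hence $d(T_1)\ge d(T_2)-\epsilon+2\alpha\ge d(T_2)\ge 3-\gamma$ by (I1), which combined with Claim~\ref{dU} and inequality (I2) would contradict $T$ being a counterexample. So $|R|=1$, i.e.\ $\deg(x_2)=6$.

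Having established $\deg(x_2)=6$, the deficiency bookkeeping is exactly as in Claim~\ref{QuasiSame}: ${\rm def}(T_1)={\rm def}(T_2)$, $v(T_1)=v(T_2)+1$, and $b(T_1)\le b(T_2)+1$, $q(T_1)\le q(T_2)+1$ (with the single extra vertex in $B(T_2)\setminus B(T_1)$ being the lone element of $R$, which may or may not already lie in $B(T_1)$), so $d(T_1)\ge d(T_2)-\epsilon$. Combining this with Claim~\ref{dU}, which gives $d(T)\ge d(T_1)-|U|\epsilon$, yields $d(T)\ge d(T_2)-(|U|+1)\epsilon$, as claimed. The main obstacle I anticipate is the same one as in Claim~\ref{QuasiSame}: carefully verifying that the vertex $z$ witnessing $|R|\ge2$, or any extra cofacial vertex produced, is genuinely a \emph{strong} dividing vertex (so that Claim~\ref{Relax2} applies at the relevant relaxation depth) rather than merely a dividing vertex — this is where the role of the flanking internal neighbors $q_1,q_2$ and the assumption $v(T)\ge 5$ (Claim~\ref{Base}) must be used to guarantee enough internal vertices on both sides of the separating curve.
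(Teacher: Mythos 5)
There is a genuine gap, and it is the main content of the claim: you never rule out $\deg(x_2)=5$. Excluding $\deg(x_2)\ge 7$ does not ``pin down'' $\deg(x_2)=6$, since criticality only gives $\deg(x_2)\ge 5$, and unlike Claim~\ref{QuasiSame} (whose outcome (1) explicitly allows degree five), the present claim asserts degree exactly six. In the paper the degree-five case is the hard half of the proof: one takes $u\in U$ with $z_1,z_2$ from Claim~\ref{DegSpecial}, uses $S(u)\subseteq S(x_2)$ (Claim~\ref{TripodS}) and Claims~\ref{Z1} and~\ref{LS} to choose $\phi(z_2)\in L(z_2)\setminus S(x_2)$, builds the cycle $C'$ from $(C\xx{U})\xx{x_2}\setminus\{ux_2\}$ through $z_2$, shows by a coloring-extension argument that $G'\setminus\{uz_1,x_2z_3\}$ contains a proper $C'$-critical subgraph, and then applies Claim~\ref{ProperCrit}(3) together with the deficiency/surplus bookkeeping and inequalities (I2), (I3) to reach a contradiction. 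None of this reducible-configuration argument appears in your proposal, so as written it does not prove the statement.

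A second, smaller problem is the mechanism you propose for $R\cap Q(T_1)=\emptyset$. You want to forbid the resulting strong dividing vertex of $T_2$ via Claim~\ref{Relax2}, but $T_2$ is a $(|U|+1)$-relaxation of $T$, i.e.\ a $2$- or $3$-relaxation, and Claim~\ref{Relax2} excludes strong dividing vertices only at depth $k\le 1$ (and true dividing vertices only for $k\le 2$); your fallback of ``reducing to the true-dividing-vertex case'' does not work, since the vertex $q$ need not be a true dividing vertex. The paper handles this differently: it does not forbid the strong dividing vertex structurally, but applies Claim~\ref{DividingStrong} directly to $T_2$ to get $d(T_2)\ge 4-2\gamma$, then transfers this back through $d(T_1)\ge d(T_2)-(2\alpha+\epsilon)$ and $d(T)\ge d(T_1)-2\epsilon$ and contradicts minimality using (I3). (Also, in your final bookkeeping the inequalities should go the other way: what is needed is $b(T_2)\ge b(T_1)$ and $q(T_2)\ge q(T_1)$, which follows from $R\subseteq B(T_2)$, $R\cap Q(T_1)=\emptyset$ and $|R|=1$; the bounds $b(T_1)\le b(T_2)+1$, $q(T_1)\le q(T_2)+1$ by themselves would only give $d(T_1)\ge d(T_2)-\epsilon-2\alpha$.)
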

\begin{proof}
Suppose not. Suppose $\deg(x_2)\ge 6$. 
As $x_2$ is a tripod of $T_1$ by Claim~\ref{X2}, $v(T_1)\ge 3$ and hence $v(T_2)\ge 2$. Thus by the minimality of $T$, $d(T_2)\ge 3-\gamma$. Moreover, ${\rm def}(T_1)={\rm def}(T_2)$ and $v(T_1)=v(T_2)+1$. Thus $d(T_1) = d(T_2) - \epsilon + \alpha(b(T_2)-b(T_1)+q(T_2)-q(T_1))$. 
Let $u_1,u_2,u_3$ be as Claim~\ref{X2}, and let
$u_1,u_2,u_3,q_1, \ldots,q_2$  be all the neighbors of $x_2$ listed in their cyclic order around $x_2$.


Let $R = N(x_2)\setminus \{u_1,u_2,u_3,q_1,q_2\}$. We claim that $R \cap Q(T_1) = \emptyset$. 
Suppose not, and let $q\in R\cap Q(T_1)$. Then $q$ is a dividing vertex of $T_2$. 
Given the presence of $q_1$ and $q_2$, $q$ is a strong dividing vertex of $T_2$. 
By Claim~\ref{DividingStrong}, $d(T_2)\ge 4-2\gamma$. 
As $d(T_1)\ge d(T_2) - (2\alpha+\epsilon)$, $d(T_1)\ge 4-2\gamma-(2\alpha+\epsilon)$. Hence $d(T)\ge 4-2\gamma-(2\alpha+\epsilon)-2\epsilon$, a contradiction as $2\alpha+3\epsilon\ + \gamma\le 1$ by inequality (I3). 
This proves the claim. Note that this implies that $R\cap B(T_2)=\emptyset$ as well.

Note that $R\subseteq B(T_2) \subseteq Q(T_2)$. Thus $q(T_2) \ge q(T_1) + |R|-1$ and $b(T_2)\ge b(T_1) + |R|-1$. Suppose $\deg(x_2)\ge 7$, then $|R|\ge 2$. Thus $d(T_1) \ge d(T_2)-\epsilon + 2\alpha$ and $d(T)\ge d(T_2)-3\epsilon +2\alpha$. As $d(T_2)\ge 3-\gamma$ by the minimality of $T$, $d(T)\ge 3-\gamma$ since $2\alpha\ge 3\epsilon$ by inequality (I1), a contradiction. So suppose $\deg(x_2)=6$. Then $|R|=1$ and hence 
$q(T_2)\ge q(T_1)$ and $b(T_2)\ge b(T_1)$. It follows that $d(T_1) \ge d(T_2)-\epsilon$ and hence $d(T)\ge d(T_2)-(|U|+1)\epsilon$ and the claim holds as desired.

So we may assume that $\deg(x_2)=5$. Let $u\in U$ and $z_1,z_2$ be as in Claim~\ref{DegSpecial}. 
By Claim~\ref{TripodS}, $S(u)\subset S(x_2)$ and hence $L(z_2)\setminus (S(u)\cup S(x_2)) = L(z_2)\setminus S(x_2)$.
 By Claim~\ref{Z1}, $N(z_2)\cap V(C)=\emptyset$ and thus $|L(z_2)\setminus S(x_2)|\ge 1$ as $|L(z_2)|=5$ and $|S(x_2)|\le 4$ by Claim~\ref{LS}. 
Let $C'$ be obtained from $(C\xx{U})\xx{x_2}\setminus \{ux_2\}$ by adding the 
vertex $z_2$ and edges $uz_2,x_2z_2$, and let $T'=(G',C',L)=T\langle C'\rangle$. Note $T'$ is critical by Corollary~\ref{SubCycle}. 

Consider $G'\setminus \{uz_1, x_2z_3\}$, where $z_3\in N(x_2)\setminus (V(C')\cup V(C))$. We claim that $G'\setminus \{uz_1,x_2z_3\}$ has a $C'$-critical subgraph. To see this, choose $\phi(z_2)\in L(z_2)\setminus S(x_2)$. Also choose $\phi(u')\in S(u')$ if $|U|=2$ where $u' \in U\setminus \{u\}$. If $\phi$ extends to an $L$-coloring of $G'\setminus \{u, x_2\}$, then $\phi$ could be extended to an $L$-coloring of $G$ as follows. 
First extend $\phi$ to $u$ by choosing $\phi(u)\in S(u)\setminus \phi(z_1)$ which is non-empty as $|S(u)|=2$. 
Then extend $\phi$ to $x_2$ by choosing $\phi(x_2)\in S(x_2)\setminus \{\phi(u),\phi(u'),\phi(z_3)\}$. 
This set is non-empty since if $|U|=1$, then $|S(x_2)|=3$ and if $|U|=2$, then $|S(x_2)|=4$. Hence $\phi$ could be extended to an $L$-coloring of $G$, contradicting that $T$ is a counterexample. Thus $\phi$ does not extend to $G'\setminus \{u,x_2\}$ and so does not extend to $G'\setminus \{uz_1,x_2z_3\}$. 
By Proposition~\ref{p:critsubcanvas} this proves the claim that $G'\setminus \{uz_1,x_2z_3\}$ has a $C'$-critical subgraph.

Thus $G'$ contains a proper $C'$-critical subgraph $G''$.  Note that $v(T')\ge 3$ given that $\deg(z_2)\ge 5$ and $|N(z_2)\cap V(C')|\le 2$. Moreover, $|E(G')\setminus E(G'')|\ge 2$. In addition, we claim that $|E(G'')\setminus E(C')|\ge 2$. Suppose not. Then there would exist a chord of $C'$, which would imply that $z_2$ is adjacent to a vertex in $C$. But then $z_2$ is a true dividing vertex of $C\xx{u}$, contradicting Claim~\ref{Relax2}. This proves the claim that $|E(G'')\setminus E(C')|\ge 2$.

By Claim~\ref{ProperCrit}(3) applied to $T'$ and $G''$, we find that $d(T')\ge 5-\gamma-(2\alpha+\epsilon)$. Moreover, $s(T_1)\le s(T')+2(2\alpha+\epsilon)$, ${\rm def}(T_1)={\rm def}(T')-1$ and hence $d(T_1)\ge d(T')-1-2(2\alpha+\epsilon)$. Thus $d(T_1)\ge 4-\gamma-3(2\alpha+\epsilon)$. Yet $d(T)\ge d(T_1)-2\epsilon$ by Claim~\ref{dU}. So $d(T)\ge 4-\gamma-6\alpha-5\epsilon$ which is at least $3-\gamma$ as $6\alpha+5\epsilon\le 1$ by inequalities (I2) and (I3), a contradiction.
This completes the proof of Claim~\ref{degX2}.
\end{proof}

Let $u\in U$ and $z_1,z_2$ be as in Claim~\ref{DegSpecial}. 
Let $C'$ be obtained from $(C\xx{U})\xx{x_2}\setminus\{u\}$ by adding the vertices $z_1,z_2$ and  edges $yz_1,z_1z_2,z_2x_2$, where $y\in N(z_1)\cap V(C)$ is chosen so that $|V(C')|$ is minimized. Let $T'=(G',C',L)=T\langle C' \rangle$. Consider $G'\setminus \{x_2z_3, x_2z_4\}$,
 where $z_3\ne z_4\in N(x_2)\setminus (V(C\xx{U})\cup V(C)\cup \{z_2\})$. 

We claim that $G'\setminus \{x_2z_3,x_2z_4\}$ has a $C'$-critical subgraph.
To see this
choose $\phi(z_1)\in S(z_1)\setminus S(u)$, which is nonempty as $|S(z_1)|\ge 3$ by Claims~\ref{Z1} and~\ref{LS}. Then choose $\phi(z_2)\in S(z_2)\setminus (S(x_2)\cup\{\phi(z_1)\})$ which is nonempty as $|L(z_1)|=|S(z_1)|=5$ by Claims~\ref{Z1} and ~\ref{LS}. Furthermore if $|U|=2$, then choose $\phi(u')\in S(u')$ where $u'\in U\setminus \{u\}$. If $\phi$ extends to an $L$-coloring of $G'\setminus\{u,x_2\}$, then $\phi$ could be extended to an $L$-coloring of $G$ as follows. First extend $\phi$ to $x_2$ by choosing $\phi(x_2)\in S(x_2)\setminus \{\phi(z_3),\phi(z_4), \phi(u')\}$ which is non-empty since by Claim~\ref{LS}, $|S(x_2)|\ge 3$ if $|U|=1$ and $|S(x_2)|\ge 4$ if $|U|=2$. Then extend $\phi$ to $u$ by choosing $\phi(u)\in S(u)\setminus \phi(x_2)$ which is non-empty as $|S(u)|=2$ by Claim~\ref{LS}. But this contradicts that $T$ is a counterexample.
Thus $\phi$ does not extend  to an $L$-coloring of $G'\setminus\{u,x_2\}$.
By Proposition~\ref{p:critsubcanvas} this proves the claim that $G'\setminus \{x_2z_3,x_2z_4\}$ has a $C'$-critical subgraph.


Thus $G'$ contains a proper $C'$-critical subgraph $G''$.  Note that $z_1$ has at least one neighbor in $G'$ not in $C'$ and $z_1$ is not adjacent to either $z_3$ or $z_4$ as $G$ has no separating $4$-cycles by Proposition~\ref{Facts}. Hence $v(T')\ge 3$. Moreover, $|E(G')\setminus E(G'')|\ge 2$. In addition, we claim that $|E(G'')\setminus E(C')|\ge 2$,
for otherwise there would exist a chord of $C'$, which is impossible given the choice of $C'$ and by Claim~\ref{Z1}.

By Claim~\ref{ProperCrit}(3) applied to $T'$ and $G''$, we find that $d(T')\ge 5-(2\alpha+\epsilon)-\gamma$. Moreover, $s(T_2)\le s(T')+2(2\alpha+\epsilon)$, ${\rm def}(T_2)\ge {\rm def}(T')-1$ and hence $d(T_2)\ge d(T')-1-2(2\alpha+\epsilon)$. Thus $d(T_2)\ge 4-3(2\alpha+\epsilon)-\gamma$. By Claim~\ref{degX2}, $d(T)\ge d(T_2)-3\epsilon \ge 4-\gamma - 6\alpha-6\epsilon$ which is at least $3-\gamma$ as $6\alpha+6\epsilon\le 1$ by inequalities (I2) and (I3), a contradiction.
This concludes the proof of Theorem~\ref{StrongLinear}.

\section{Consequences of Theorem~\ref{StrongLinear}}
\label{sec:conseqstronglin}

Let us state Theorem~\ref{StrongLinear} with explicit constants while omitting boundary and quasi-boundary from the formula.

\begin{theorem}\label{StrongLinear2}
If $(G,C,L)$ is a critical canvas, then $$|V(G)\setminus V(C)|/18 + \sum_{f\in \F(G)}(|f|-3)\le |C|-4.$$
\end{theorem}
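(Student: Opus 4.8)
The goal is to derive Theorem~\ref{StrongLinear2} from Theorem~\ref{StrongLinear} by choosing concrete values of $\epsilon$, $\alpha$, $\gamma$ satisfying (I1)--(I3), and then to convert the resulting inequality on $d(T)$ into the stated ``face-weighted'' bound using the alternative deficiency formula of Lemma~\ref{deffaces}. First I would pick $\epsilon$ and $\alpha$ so that the coefficient of $v(G)$ in the final inequality becomes exactly $1/18$, and $\gamma$ as small as (I2) and (I3) permit; concretely, taking $\alpha=3\epsilon/2$ (equality in (I1)) forces $s(G)=\epsilon v(G)+\alpha(b(G)+q(G))\ge \epsilon v(G)$, and since $b(G)+q(G)$ can be as small as we need to ignore, the dominant term in $s(G)$ is $\epsilon v(G)$. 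Setting $\gamma=6\alpha+3\epsilon=12\epsilon$ (equality in (I2)) and then (I3) reads $2\alpha+3\epsilon+\gamma=3\epsilon+3\epsilon+12\epsilon=18\epsilon\le 1$, so $\epsilon=1/18$ is the extremal choice; this is exactly where the constant $18$, and hence the $19$ of Theorem~\ref{LinearCycle0}, comes from.

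\textbf{Key steps.} With $\epsilon=1/18$, $\alpha=1/12$, $\gamma=2/3$, Theorem~\ref{StrongLinear} gives, for every critical canvas $T=(G,C,L)$ with $v(T)\ge 2$, that $d(T)={\rm def}(T)-\epsilon v(G)-\alpha(b(G)+q(G))\ge 3-\gamma=7/3$. Dropping the nonnegative term $\alpha(b(G)+q(G))$ yields ${\rm def}(G)\ge v(G)/18+7/3$. Now apply Lemma~\ref{deffaces}: ${\rm def}(G)=|C|-3-\sum_{f\in\F(G)}(|f|-3)$. Substituting and rearranging gives $\sum_{f\in\F(G)}(|f|-3)\le |C|-3-v(G)/18-7/3 = |C|-16/3-v(G)/18 \le |C|-4-v(G)/18$, which is precisely the claimed inequality once we note $v(G)=|V(G)\setminus V(C)|$. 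It remains to handle the cases $v(T)\in\{0,1\}$ that Theorem~\ref{StrongLinear} does not cover: if $v(T)=0$ then $G=C$ is excluded by criticality, so any chord of $C$ exists and one checks directly via Lemma~\ref{deffaces} (or Proposition~\ref{d0}) that ${\rm def}(G)=|E(G)\setminus E(C)|\ge 1$, giving $\sum(|f|-3)\le |C|-4$; if $v(T)=1$, the single internal vertex has degree at least five by Proposition~\ref{Facts}(2), so ${\rm def}(G)=|E(G)\setminus E(C)|-3\ge 2$, which combined with $v(G)/18<1/2$ again gives the bound with room to spare.

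\textbf{Main obstacle.} The substantive content is entirely in Theorem~\ref{StrongLinear}; the present deduction is essentially bookkeeping. The one point requiring a little care is verifying that the small-$v(T)$ cases genuinely satisfy the inequality with the $1/18$ coefficient present (not just the $|C|-4$ part), and confirming that $|C|\ge 3$ together with criticality forces $|E(G)\setminus E(C)|$ to be large enough; I expect no real difficulty, but one must be careful that $G\ne C$ (from the definition of critical canvas) is used to guarantee at least one chord or internal edge. A secondary bookkeeping check is that the chosen $(\epsilon,\alpha,\gamma)$ indeed satisfy all three inequalities with the stated equalities, which is immediate.
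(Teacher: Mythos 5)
Your proposal is correct and follows essentially the same route as the paper: choose $\epsilon=1/18$, $\alpha=1/12$, $\gamma=2/3$ (which meet (I1)--(I3) with equality), apply Theorem~\ref{StrongLinear} together with Proposition~\ref{d0} (for $v(T)\le 1$) to lower-bound $d(G)$, drop the nonnegative $\alpha(b(G)+q(G))$ term, and translate via Lemma~\ref{deffaces}. The only difference is cosmetic: the paper compresses all cases into the single inequality $1\le d(G)$, while you track the sharper bound $3-\gamma=7/3$ in the main case and verify the small cases explicitly, which is fine.
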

\begin{proof}
Let $\epsilon=1/18$, $\alpha=1/12$ and $\gamma=2/3$. By Proposition~\ref{d0}, Theorem~\ref{StrongLinear} and
Lemma~\ref{deffaces}
$$1\le d(G)\le {\rm def}(G)-v(G)/18=|C|-3 - \sum_{f\in \F(G)} (|f|-3)-v(G)/18,$$
and the theorem follows.
\end{proof}

The following corollary follows immediately.

\begin{corollary}\label{CycleBoundedFace} 
%
Let $(G,C,L)$ be a critical canvas. If $f$ is an internal face of $G$, then $|f| < |C|-1$.
\end{corollary}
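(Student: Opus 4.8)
Corollary~\ref{CycleBoundedFace} is a direct numerical consequence of Theorem~\ref{StrongLinear2}, so the plan is simply to unwind the inequality. Let $(G,C,L)$ be a critical canvas and let $f$ be an internal face of $G$. Since every term $|f'|-3$ with $f'\in\F(G)$ is nonnegative (face boundaries are cycles, hence have length at least three), and $|V(G)\setminus V(C)|/18\ge 0$, Theorem~\ref{StrongLinear2} gives in particular
$$|f|-3\le \sum_{f'\in\F(G)}(|f'|-3)\le |C|-4,$$
and therefore $|f|\le |C|-1$.

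To get the strict inequality $|f|<|C|-1$ claimed in the corollary, I would argue that equality $|f|=|C|-1$ cannot occur. If it did, then chasing back through the displayed inequalities forces $|V(G)\setminus V(C)|/18=0$ and $\sum_{f'\in\F(G)}(|f'|-3)=|f|-3=|C|-4$; in other words $G$ has no internal vertices and $f$ is the only face whose boundary is longer than a triangle. But a critical canvas with no internal vertices has $G=C$ together with some chords (all internal faces other than the ``long'' structure would be triangles); with $v(G)=0$ the face-sum forces $\mathrm{def}(G)=|C|-3-(|C|-4)=1$, which is consistent, so I would instead pin down the contradiction at the level of Theorem~\ref{StrongLinear} itself — the bound $1\le d(G)$ is strict enough room: with $\epsilon,\alpha,\gamma$ as chosen, $d(G)={\rm def}(G)-v(G)/18-\alpha(b(G)+q(G))$, and when $v(G)=0$ one has $b(G)=q(G)=0$, so $d(G)={\rm def}(G)={\rm def}(G)$. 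Hmm — this shows equality is actually possible at the level of the displayed chain, so the honest route is:

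Observe that the corollary as stated only needs the non-strict bound plus the trivial remark that if $|f|=|C|-1$ then every other internal face is a triangle and $G$ has no internal vertex, whence $G\langle C_f\rangle=C_f$ would have to carry all of $G$ minus at most chords — but then $G$ would be $C$-colorable by Theorem~\ref{Thom2} applied after precoloring $C$, contradicting criticality. Concretely, I would take $C'=C_f$, note $G\langle C'\rangle\ne C'$ is impossible in this extremal case since $G$ has no internal vertices and $C'$ has no chords, and conclude $G$ itself is chordal-only on $|C|$ vertices; criticality then fails because a $3$-list-assignment extension argument (Theorem~\ref{Thom2}) colors everything. The main obstacle is being careful that the extremal analysis really does exhaust the equality case: one must check that $\sum(|f'|-3)=|C|-4$ with one face contributing $|C|-4$ forces all remaining internal faces to be triangles, and that this configuration together with $v(G)=0$ is incompatible with $C$-criticality. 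Everything else is arithmetic that follows immediately from Theorem~\ref{StrongLinear2}.
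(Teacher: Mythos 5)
Your opening display is exactly the paper's own derivation: Corollary~\ref{CycleBoundedFace} is presented there as an immediate consequence of Theorem~\ref{StrongLinear2}, and that computation gives $|f|\le |C|-1$. The problem is everything you add to upgrade this to the strict inequality. The final step --- that in the equality case ``$G$ would be $C$-colorable by Theorem~\ref{Thom2} applied after precoloring $C$'' --- is not a legitimate use of Theorem~\ref{Thom2}: that theorem allows at most \emph{two} precolored vertices, which must be adjacent and have disjoint lists, together with lists of size three on the outer face; it says nothing about extending a precoloring of the entire cycle $C$ across chords. Indeed an $L$-coloring of $C$ that assigns the same color to both ends of a chord never extends, so a chord-only graph can perfectly well be $C$-critical, and no contradiction with criticality arises. (Your write-up even concedes midway that equality is possible at the level of the displayed chain, and then reverts to trying to exclude it.)

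In fact the extremal configuration you are trying to rule out exists, so no repair of this step is possible. Take $C=v_1v_2\dots v_n$, let $G=C+v_1v_3$, and let every list be $\{1,\dots,5\}$. The only proper subgraph of $G$ containing $C$ is $C$ itself, and a proper $L$-coloring of $C$ with $\phi(v_1)=\phi(v_3)$ extends to $C$ but not to $G$; hence $(G,C,L)$ is a critical canvas. Its internal faces have lengths $3$ and $|C|-1$, and equality holds throughout your chain ($v(G)=0$, $\mathrm{def}(G)=d(G)=1$). So what actually follows from Theorem~\ref{StrongLinear2} --- and all that the paper's one-line justification supports --- is the non-strict bound $|f|\le|C|-1$; the strict form needs an additional hypothesis such as $v(G)\ge1$, in which case the term $v(G)/18$ in Theorem~\ref{StrongLinear2} already makes the inequality strict with no case analysis at all. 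Your proposal is correct up to and including the non-strict bound, which coincides with the paper's route; the extra machinery for strictness is wrong and should be deleted or replaced by such an explicit extra hypothesis.
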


Omitting face sizes from the formula of Theorem~\ref{StrongLinear2} gives the following even simpler bound:

\begin{theorem}\label{LinearCycle}
If $(G,C,L)$ is a critical canvas, then $|V(G)|\le 19 |V(C)|$.
\end{theorem}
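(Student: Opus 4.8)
The plan is to deduce Theorem~\ref{LinearCycle} directly from Theorem~\ref{StrongLinear2}, which is the only hard input. First I would observe that Theorem~\ref{StrongLinear2} already gives, for any critical canvas $(G,C,L)$, the bound
$$|V(G)\setminus V(C)|/18 + \sum_{f\in\F(G)}(|f|-3) \le |C|-4.$$
Since every internal face has $|f|\ge 3$, the sum $\sum_{f\in\F(G)}(|f|-3)$ is nonnegative, so it can simply be dropped, leaving $|V(G)\setminus V(C)|/18 \le |C|-4 \le |C|$. Multiplying by $18$ gives $|V(G)\setminus V(C)|\le 18|V(C)|$, and adding the $|V(C)|$ boundary vertices yields $|V(G)| = |V(G)\setminus V(C)| + |V(C)| \le 19|V(C)|$, as desired.

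The one genuine subtlety is that Theorem~\ref{StrongLinear2} applies only to \emph{critical} canvases, so I should be a little careful about the hypothesis of Theorem~\ref{LinearCycle}: it already stipulates that $(G,C,L)$ is a critical canvas, so this is immediate and there is nothing further to check. (If one instead wanted the version for an arbitrary canvas, one would invoke Proposition~\ref{p:critsubcanvas} to pass to a critical subcanvas, but that is not needed here.)

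So the main obstacle is not in this step at all — it lies entirely upstream, in establishing Theorem~\ref{StrongLinear} (equivalently Theorem~\ref{StrongLinear2}), which is where the discharging-style induction via dividing vertices, tripods, and relaxations does all the work. Given that theorem, the present statement is a one-line arithmetic consequence. I would therefore write the proof as essentially: invoke Theorem~\ref{StrongLinear2}, discard the nonnegative face-sum term, and rearrange.

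\begin{proof}
Since $(G,C,L)$ is a critical canvas, Theorem~\ref{StrongLinear2} applies and gives
$$\frac{|V(G)\setminus V(C)|}{18} + \sum_{f\in\F(G)}(|f|-3) \le |C|-4.$$
Every internal face $f$ of $G$ satisfies $|f|\ge 3$, so $\sum_{f\in\F(G)}(|f|-3)\ge 0$, and hence
$$\frac{|V(G)\setminus V(C)|}{18} \le |C|-4 \le |C|.$$
Therefore $|V(G)\setminus V(C)| \le 18|V(C)|$, and adding $|V(C)|$ to both sides gives
$$|V(G)| = |V(G)\setminus V(C)| + |V(C)| \le 19|V(C)|,$$
as claimed.
\end{proof}
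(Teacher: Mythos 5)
Your proof is correct and follows essentially the same route as the paper: invoke Theorem~\ref{StrongLinear2}, drop the nonnegative term $\sum_{f\in\F(G)}(|f|-3)$ (nonnegative since $G$ is $2$-connected and every internal face is bounded by a cycle of length at least $3$), and add $|V(C)|$. The paper's own proof is exactly this one-line deduction, so there is nothing further to compare.
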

\begin{proof}
Theorem~\ref{StrongLinear2} implies
$|V(G)\setminus V(C)|\le 18|C|$, and hence $|V(G)|= |V(G)\setminus V(C)|+|V(C)|\le 19|V(C)|$.
\end{proof}

We are now ready to prove Theorem~\ref{LinearCycle0}. Indeed, Theorem~\ref{LinearCycle} is a stronger version of Theorem~\ref{LinearCycle0}.
\medskip

\noindent 
{\bf Proof of Theorem~\ref{LinearCycle0}.}
Let $G,C,L$ and $H$ be as in the statement of Theorem~\ref{LinearCycle0}.
%
We claim that $H$ is $C$-critical.
Suppose not. Then, by definition, there exists a proper subgraph $H'$ of $H$ such that for every $L$-coloring $\phi$ of $C$, 
$\phi$ extends to $H'$ if and only if $\phi$ extends to $H$. 
But now if $\phi$ is an $L$-coloring of $C$ that extends to $H'$, then $\phi$ extends to $H$ and hence to $G$,
contradicting the minimality of $H$. 
This proves the claim  that $H$ is $C$-critical.

It follows from Lemma~\ref{2Conn} that $(H,C,L)$ is a critical canvas. 
By Theorem~\ref{LinearCycle} we have $|V(H)|\le 19 |V(C)|$, as desired.\hfill\qed

\section*{Acknowledgment}
The results of this paper form part of the doctoral dissertation~\cite{PosPhD} of the first author,
written under the guidance of the second author.

\baselineskip 11pt
\vfill
\noindent
This material is based upon work supported by the National Science Foundation.
Any opinions, findings, and conclusions or
recommendations expressed in this material are those of the authors and do
not necessarily reflect the views of the National Science Foundation.
\eject

\end{document}